\theoremstyle{plain}
\newtheorem{theorem}{Theorem}[section]
\newtheorem{corollary}[theorem]{Corollary}
\newtheorem{lemma}[theorem]{Lemma}
\newtheorem{assumption}[theorem]{Assumption}
\theoremstyle{definition}
\newtheorem{definition}[theorem]{Definition}
\newtheorem{example}[theorem]{Example}
\newtheorem{remark}[theorem]{Remark}
\newcommand{\E}{\ensuremath{{\mathbb E}}}
\begin{document}

\title{An application of the multiplicative Sewing Lemma to the high order weak approximation of stochastic differential equations}
\date{June 21, 2022}

\author{Antoine Hocquet}
\author{Alexander Vogler}

\affil{\small Technische Universit\"at Berlin, Berlin, Germany}

\maketitle

\unmarkedfntext{\textit{Mathematics Subject Classification (2020) ---} 60F05, 60G15, 60J35, 60H10, 60G53}

\unmarkedfntext{\textit{Keywords and phrases --- weak approximation, sewing lemma, cubature on Wiener space} }

\unmarkedfntext{\textit{Mail}: \textbullet$\,$ antoine.hocquet86@gmail.com $\,$\textbullet$\,$ vogler@math.tu-berlin.de}


\begin{abstract}
	We introduce a variant of the multiplicative Sewing Lemma in [Gerasimovi{\v{c}}s, Hocquet, Nilssen; J.\ Funct.\ Anal.\ 281 (2021)] which yields arbitrary high order weak approximations to stochastic differential equations, extending the cubature approximation on Wiener space introduced by Lyons and Victoir. Our analysis allows to derive stability estimates and explicit weak convergence rates. As a particular example, a cubature approximation for stochastic differential equations driven by continuous Gaussian martingales is given. 
\end{abstract}

\tableofcontents

\section{Introduction}

In this paper we derive an arbitrary high order weak approximation result for stochastic differential equations as an application of a version of the multiplicative sewing lemma, first introduced in \cite{GHN21}. This allows us to deduce weak convergence with rather weak assumptions in terms of the expected signature of the approximating driving signal. Our approximation covers in particular the cubature approximations on Wiener space introduced in \cite{article}. \\
The expected signature of a stochastic process provides significant information about its distribution. In \cite{article} this was used to construct a cubature approximation on Wiener space for the stochastic differential equation
\begin{align*}
X_t^x=x+\sum_{i=0}^{d}\int_{0}^{t}V_i(X_s^x)\circ dB_s^i,
\end{align*}
where $V_i\in \mathcal{C}_b^\infty(\mathbb{R}^{d},\mathbb{R}^d)$ and $B=(B^1,...,B^d)$  is  a $d$-dimensional Brownian motion and $B^0(t)=t$. One can view the solution $X_T$ as a functional of the Brownian path $(B_s)_{s\in [0,T]}$ 
\begin{align*}
X_T^x=\phi_{x,T}((B_s)_{s\in [0,T]}),\quad \phi_{x,T}:\mathcal{C}_0([0,T],\mathbb{R}^d)\rightarrow \mathbb{R}^d
\end{align*} 
and therefore write the expectation $P_{0,T}f(x):=\E\left[f(X_T^x)\right]$ as the integral over the Wiener space with respect to the Wiener measure $\mathbb{P}_W$,
\begin{align*}
P_{0,T}f(x)=\int_{\mathcal{C}_0([0,T],\mathbb{R}^d)}^{}f(\phi_{x,T}(\omega))\mathbb{P}_W(d\omega).
\end{align*}
The main idea of cubature on Wiener space now is to approximate such integrals by replacing the Wiener measure with some discrete cubature measure $\mathbb{Q}_{0,T}$, which is given by the law of a random path of bounded variation $W$, such that the expected signature of the Brownian motion and that of the random path $W$ coincide up to a certain order. Explicit examples for cubature measures can be found for example in \cite{article} and \cite{NV08}.\\
If the approximation is performed on arbitrary time intervals $[s,t]$, for $0\leq s\leq t\leq T$, this leads to a family of operators $Q_{s,t}f(x):=\E\left[f(X_t^{s,t,x})\right]$, where $X^{s,t,x}$ is the solution to 
\begin{align*}
X_r^{s,t,x}=x+\sum_{i=0}^{d}\int_{s}^{r}V_i(X_r^{s,t,x})dW_r^{s,t,i},\quad r\in [s,t].
\end{align*} 
Under suitable regularity assumptions it can then be shown, that
\begin{equation}\label{limit}
\lim_{n\rightarrow \infty}\prod_{i=0}^{k_n}Q_{t_{i}^n,t_{i+1}^{n}}f=P_{0,T}f,
\end{equation} 
for any sequence $\pi^n:=\{0=t_0^n<t_1^n<...<t_{k_n}^n=T\}$ with $|\pi^n|:= \max_{1\leq i\leq k_n} |t_{k_i}- t_{k_{i-1}}|\rightarrow 0$, as $n\rightarrow \infty$.\\

The multiplicative sewing lemma (Theorem \ref{sewing} below) happens to be suited to derive convergence results on semigroup approximations of the type (\ref{limit}), in a rather general setting. Aside from some stability estimate, it only requires $(Q_{s,t})_{(s,t)}$ to be almost multiplicative, i.e.\\ $|Q_{s,t}-Q_{s,u}Q_{u,t}|\leq \chi^z(s,t)$, for some control function $\chi$ and \( z>1\). For that it is sufficient that the expected signature of the noise approximation $(W^{(s,t)})_{(s,t)}$ be itself multiplicative up to a certain order, i.e.
\begin{align*}
	\E\left[S^m_{s,u}(W^{s,u})\right]\otimes\E\left[S^m_{u,t}(W^{u,t})\right]=\E\left[S^m_{s,t}(W^{s,t})\right],
\end{align*}
for some sufficiently large $m\in \mathbb{N}$, which will be one of our main assumptions. This condition is met for instance if the signature of the approximation process coincides with that of a square-integrable continuous martingale. Our approach therefore also covers cubature type approximations for stochastic differential equations driven by more general types of noise than Brownian motion.\\

The main tool to prove our main result will be the following version of the multiplicative sewing Lemma. A proof will be given in the Appendix.

\begin{theorem}\label{sewing}
	Let $(X,\|\cdot\|_X),(Y,\|\cdot \|_Y)$ be Banach spaces, such that  $X\subseteq  Y$ is dense with respect to $\|\cdot \|_Y$ and $\|x\|_Y\leq \|x\|_X$ for all $x\in X$. Let $\mu:\Delta_T\rightarrow L(X,X)\cap L(Y,Y)$, that is $\mu_{s,t}\in L(Y,Y)$ and $\mu_{s,t}|_X$ is continuous with respect to $\|\cdot \|_X$. \\
	Assume that $\mu_{s,s}=Id$ and there exist continuous functions $\epsilon_1,\epsilon_2:\Delta_T\rightarrow [0,\infty)$, which are increasing in the second component and decreasing in the first component, such that for any $(s,t)\in \Delta_T$ and any partition $\pi=\{s\leq t_0<t_1<...<t_k=t\}$ of $[s,t]$
	\begin{align*}
	\|\mu_{s,t}^\pi\|_{L(X,X)}\leq \epsilon_1(s,t),\quad \|\mu_{s,t}^\pi\|_{L(Y,Y)}\leq \epsilon_2(s,t),
	\end{align*}
	where 
	\begin{align*}
		\mu_{s,t}^\pi:=\mu_{s,t_1}\mu_{t_1,t_2}...\mu_{t_{k-1},t}.
	\end{align*}
	If there exists a control $\chi$, such that $(\mu_{s,t})_{(s,t)\in \Delta_T}$ is almost multiplicative in the sense that for all $s\leq u\leq t$ in $[0,T]$ 
	\begin{align*}
		\|\mu_{s,t}-\mu_{s,u}\mu_{u,t}\|_{L(X,Y)}\leq \chi^z(s,t),
	\end{align*}
	for some $z>1$, then there exists a unique evolution system of bounded linear operators $(\psi_{s,t})_{(s,t)\in \Delta_T}$ on $Y$, such that for all $(s,t)\in \Delta_T$
	\begin{align*}
		\|\psi_{s,t}-\mu_{s,t}^{\pi_n}\|_{L(X,Y)}\rightarrow 0,
	\end{align*}
	for any sequence of partitions $(\pi_n)_n$ with $|\pi_n|\rightarrow 0$. Furthermore it holds $\|\psi_{s,t}\|_{L(Y,Y)}\leq \epsilon_2(s,t)$,
	\begin{align*}
		\|\psi_{s,t}-\mu_{s,t}\|_{L(X,Y)}\leq 2^z\zeta(z)\epsilon^2(s,t)\chi(s,t)^z.
	\end{align*}
	and 
	\begin{align*}
		\|\mu_{s,t}^\pi-\psi_{s,t}\|_{L(X,Y)}&\leq  2^z\epsilon_1^2(s,t)\epsilon_2^2(s,t)\zeta(z)\chi(s,t)\max_{1\leq i\leq k}\{\chi^{z-1}(t_i,t_{i+1})\},
	\end{align*}
	where $\zeta$ is the Riemann zeta function. If in addition $(\mu_{s,t})_{(s,t)\in \Delta_T}$ is strongly continuous as an element of $L(Y,Y)$, i.e. for all $y\in Y$
	\begin{align*}
		\|\mu_{s,t}y-y\|_Y\rightarrow 0,
	\end{align*}
	as $(s,t)\rightarrow 0$, then $(\psi_{s,t})_{(s,t)\in \Delta_T}$ is also strongly continuous.
\end{theorem}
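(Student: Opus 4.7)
The plan is to construct $\psi_{s,t}$ as the limit in $L(X,Y)$ of the partition products $\mu^{\pi_n}_{s,t}$ along any sequence with $|\pi_n|\to 0$, adapting the coarsening technique of the additive sewing lemma to the multiplicative setting. The basic building block will be a \emph{one-point removal} estimate: for $\pi = \{s=t_0<\dots<t_k=t\}$ and $\pi'=\pi\setminus\{t_i\}$ with $i$ interior, factoring
\begin{align*}
\mu^{\pi}_{s,t} - \mu^{\pi'}_{s,t} = \bigl(\mu_{t_0,t_1}\cdots\mu_{t_{i-2},t_{i-1}}\bigr)\bigl(\mu_{t_{i-1},t_i}\mu_{t_i,t_{i+1}}-\mu_{t_{i-1},t_{i+1}}\bigr)\bigl(\mu_{t_{i+1},t_{i+2}}\cdots\mu_{t_{k-1},t_k}\bigr)
\end{align*}
and reading the right block in $L(X,X)$, the middle parenthesis in $L(X,Y)$ via almost-multiplicativity, and the left block in $L(Y,Y)$ yields
\begin{align*}
\|\mu^{\pi}_{s,t}-\mu^{\pi'}_{s,t}\|_{L(X,Y)}\le \epsilon_1(s,t)\,\epsilon_2(s,t)\,\chi^z(t_{i-1},t_{i+1}).
\end{align*}

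Next, I would iterate this reduction down to the trivial partition $\{s,t\}$ by always removing the interior index minimising $\chi(t_{i-1},t_{i+1})$. Super-additivity of $\chi$ forces $\min_i\chi(t_{i-1},t_{i+1})\le 2\chi(s,t)/m$ whenever $m$ interior points remain, by splitting $\sum_i\chi(t_{i-1},t_{i+1})$ into its even- and odd-indexed subsums, each of which telescopes into $\chi(s,t)$. Summing the resulting geometric series over the successive removals produces
\begin{align*}
\|\mu^{\pi}_{s,t}-\mu_{s,t}\|_{L(X,Y)}\le 2^z\zeta(z)\,\epsilon_1(s,t)\epsilon_2(s,t)\,\chi^z(s,t),
\end{align*}
which is the second displayed estimate. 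For the third one, given $\pi\subseteq\pi'$, I would telescope over the coarse $\pi$-intervals,
\begin{align*}
\mu^{\pi'}_{s,t}-\mu^{\pi}_{s,t}=\sum_i \mu^{\pi'|_{[s,t_i]}}_{s,t_i}\bigl(\mu^{\pi'|_{[t_i,t_{i+1}]}}_{t_i,t_{i+1}}-\mu_{t_i,t_{i+1}}\bigr)\mu^{\pi|_{[t_{i+1},t]}}_{t_{i+1},t},
\end{align*}
bound each middle factor by the previous step (contributing an extra $\epsilon_1\epsilon_2$), and extract $\max_i\chi^{z-1}(t_i,t_{i+1})$ from $\sum_i\chi^z(t_i,t_{i+1})\le \max_i\chi^{z-1}(t_i,t_{i+1})\cdot\chi(s,t)$. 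Continuity of $\chi$ on the diagonal then makes $\{\mu^{\pi_n}_{s,t}\}$ Cauchy in $L(X,Y)$ whenever $|\pi_n|\to 0$, defining $\psi_{s,t}$ on $X$ and, after passing to the limit $|\pi'|\to 0$, giving the third displayed estimate.

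The remaining assertions fit into a standard density framework. Extension of $\psi_{s,t}$ to an element of $L(Y,Y)$ with the same bound $\epsilon_2(s,t)$ follows from $\|\mu^{\pi_n}_{s,t}\|_{L(Y,Y)}\le \epsilon_2(s,t)$ together with density of $X$ in $Y$. The evolution law $\psi_{s,u}\psi_{u,t}=\psi_{s,t}$ is inherited from the identity $\mu^{\pi_n}_{s,t}=\mu^{\pi_n\cap[s,u]}_{s,u}\mu^{\pi_n\cap[u,t]}_{u,t}$ obtained by demanding $u\in\pi_n$, and is passed to the limit on $X$ using a density approximation of $\psi_{u,t}x$ by $X$-elements combined with the uniform $L(Y,Y)$-bound on the left factor. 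Uniqueness is automatic: any two partition-limit evolution systems agree on $X$, hence on $Y$ by density. Finally, strong continuity of $\psi$ on $Y$ is transferred from that of $\mu$ using $\|\psi_{s,t}-\mu_{s,t}\|_{L(X,Y)}\to 0$ as $t\downarrow s$, a density approximation in $Y$, and the uniform $L(Y,Y)$-bound on $\psi$.

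The main obstacle will be to orchestrate the three different operator norms correctly in the telescoped products: the almost-multiplicative gap only lives in $L(X,Y)$, which forces the factor acting first to be estimated in $L(X,X)$ and the factor acting last in $L(Y,Y)$. This is precisely what makes the dense inclusion $X\hookrightarrow Y$ structurally necessary and what dictates the simultaneous appearance of $\epsilon_1$ and $\epsilon_2$ in every constant; everything else is bookkeeping around pigeonhole estimates and geometric summation.
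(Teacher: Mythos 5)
Your proposal is correct and follows essentially the same route as the paper's proof: one-point removal via the pigeonhole bound $\min_i\chi(t_{i-1},t_{i+1})\le 2\chi(s,t)/m$, summation of $\sum_k k^{-z}$ to get the $2^z\zeta(z)\epsilon_1\epsilon_2\chi^z$ comparison with the trivial partition, telescoping a partition against its refinement to obtain the Cauchy property and the $\chi(s,t)\max_i\chi^{z-1}(t_i,t_{i+1})$ rate, and then completeness, density extension with the $\epsilon_2$ bound, the evolution property via partitions containing $u$, and strong continuity. The only cosmetic slip is calling the removal sum a ``geometric series'' (it is the zeta series $\sum_k(2/k)^z$), but the constant you state is the correct one, and your norm bookkeeping (first-acting block in $L(X,X)$, gap in $L(X,Y)$, last-acting block in $L(Y,Y)$) matches the intended argument.
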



\section{Preliminaries}

Let us first introduce some useful notations and definitions, used throughout the whole manuscript. In subsection \ref{Assumptions} below we then introduce our assumptions on the noise approximations and state our main result. 

\subsection{Notation}

In the whole paper we consider an arbitrary but finite time horizon $T>0$ and fixed dimension $d\geq 1$. By $\Delta_T$ we denote the simplex
\begin{align*}
	\Delta_T:=\{(s,t)|0\leq s< t\leq T\}.
\end{align*}

Using a similar notation for multi-indices as in \cite{CM19}, we define
\begin{align*}
	\mathcal{A}:=\{\emptyset\}\cup \bigcup_{k\geq 1}\{0,1,...,d\}^k,\quad \mathcal{A}^1:=\{\emptyset\}\cup \bigcup_{k\geq 1}\{1,...,d\}^k
\end{align*}
and endow $\mathcal{A}$ with the concatenation operation 
\begin{align*}
	\alpha *\beta:=(\alpha_1,...,\alpha_k,\beta_1,...,\beta_l),
\end{align*}
for $\alpha=(\alpha_1,...,\alpha_k)\in \mathcal{A}$ and $\beta=(\beta_1,...,\beta_l)\in \mathcal{A}$. Let
\begin{align*}
	\mathcal{A}_l&:=\{\alpha\in \mathcal{A},\, \|\alpha\|\leq l\},\\
	\mathcal{A}_l^1&:=\{\alpha\in \mathcal{A}^1,\, \|\alpha\|\leq l\},
\end{align*}
where the $n$-tuples lengths are defined as
\begin{align*}
	|\alpha|:=\begin{cases}
		k & \text{if }\alpha=(\alpha_1,...,\alpha_k)\\
		0 & \text{if }\alpha=\emptyset
	\end{cases},
	\quad \|\alpha\|:=|\alpha|+|\{i\in \{1,...,d\}|\alpha_i=0\}|. 
\end{align*}
For smooth $V_i\in \mathcal{C}^\infty_b(\mathbb{R}^d,\mathbb{R}^d)$ regarded as vector fields on $\mathbb{R}^d$ by
\begin{align*}
	V_if(x)=\sum_{k=1}^{d}V_i^k(x)\frac{\partial}{\partial x_k}f(x),
\end{align*}
we define for $\alpha=(\alpha_1,...,\alpha_k)\in \mathcal{A}$
\begin{align*}
	V_\alpha f:=\begin{cases}
		f & \text{if }|\alpha|=0\\
		V_{\alpha_1}...V_{\alpha_k}f &\text{else.}
	\end{cases}
\end{align*}

With the notation introduced in \cite{CM19}, given $\alpha=(\alpha_1,...,\alpha_n)\in \mathcal{A}$ and $\omega\in \mathcal{C}^{1-var}([0,T],\mathbb{R}^d)$, we denote the iterated integral of the process $Y$ over the interval $[s,t]$ by
\begin{align*}
	I_{s,t}^\alpha[\omega](Y):=\int_{s}^{t}\int_{s}^{t_n}....\int_{s}^{t_2}Y_{t_1}d\omega_{t_1}^{\alpha_1}....d\omega_{t_n}^{\alpha_n},
\end{align*}
where we set $\omega^0_r=r$. For some continuous $d$-dimensional martingale $M$, we denote the iterated Stratonovich integral of the process $Y$ over the interval $[s,t]$ by
\begin{align*}
	I_{s,t}^\alpha[\circ M](Y):=\int_{s}^{t}\int_{s}^{t_n}....\int_{s}^{t_2}Y_{t_1}\circ dM_{t_1}^{\alpha_1}....\circ dM_{t_n}^{\alpha_n},
\end{align*}
where we set $M^0_r=r$. If $Y\equiv 1$, we omit the dependence on $Y$ and just write $I^\alpha_{s,t}[\omega]$ or $I^\alpha_{s,t}[\circ M]$ respectively.\\\\

Given a path $\omega\in \mathcal{C}^{1-var}([0,T],\mathbb{R}^d)$ and the basis $e_0,...,e_d$ of $\mathbb{R}\oplus \mathbb{R}^d$, where $e_i=(0,...,0,1,0,...,0)$, we define the series of iterated integrals over the interval $[s,t]$ with respect to $\omega$ by
\begin{align*}
	S_{s,t}[\omega]:=\sum_{k=0}^{\infty}\sum_{\alpha=(i_1,...,i_l)\in \mathcal{A}_k\setminus \mathcal{A}_{k-1}}I_{s,t}^{\alpha}[\omega]e_{i_1}\otimes ...\otimes e_{i_l}.
\end{align*}
Similarly, for a $d$-dimensional martingale $M$ we define 
\begin{align*}
	S_{s,t}[\circ M]:=\sum_{k=0}^{\infty}\sum_{\alpha=(i_1,...,i_l)\in \mathcal{A}_k\setminus \mathcal{A}_{k-1}}I_{s,t}^{\alpha}[\circ M]e_{i_1}\otimes ...\otimes e_{i_l}.
\end{align*}
For $m\in \mathbb{N}$, the truncated series of iterated integrals is then defined as $S_{s,t}^m[\omega]:=\pi_m(S_{s,t}[\omega])$ and $S_{s,t}^m[\circ M]:=\pi_m(S_{s,t}[\circ M])$ respectively.

\subsection{Settings and Assumptions}
\label{Assumptions} 

In the whole manuscript we consider a given family of random paths $(W^{s,t})_{(s,t)\in \Delta_T}$
\begin{align*}
	W^{s,t}:\Omega\rightarrow \mathcal{C}^{0,1-var}([s,t],\mathbb{R}^d),
\end{align*} 
defined on some probability space $(\Omega,\mathcal{F},\mathbb{P})$, which represent a given noise approximation on different time intervals. Here $\mathcal{C}^{0,1-var}([s,t],\mathbb{R}^d)$ denotes the set of absolutely continuous functions from $[s,t]$ to $\mathbb{R}^d$. Furthermore we let $W^{s,t,0}_r:=r$. When there is no confusion possible, we will write $I_{a,b}^{s,t,\alpha}$ and $S^{s,t,m}_{a,b}$ instead of $I_{a,b}^\alpha[W^{s,t}]$ and $S^m_{a,b}[W^{s,t}]$ respectively, for $(s,t)\in \Delta_T$ and $s\leq a\leq b\leq t$.

\begin{definition}
	A continuous map $\chi:\Delta_T\rightarrow [0,\infty)$ is called a control, if for all $s\leq u\leq t$ in $[0,T]$
	\begin{align*}
		\chi(s,u)+\chi(u,t)\leq \chi(s,t)
	\end{align*}
	and $\chi(s,s)=0$, for all $0\leq s\leq T$
\end{definition}

\begin{assumption}\label{a1}
	\begin{enumerate}[(i)]
		We assume that for all $s\le u\leq t$ in $[0,T]$, the processes $W^{s,u}$ and $W^{u,t}$ are independent, i.e.\ $\sigma(\{W^{s,u}_r|s\leq r\leq u\})$ and $\sigma(\{W^{u,t}_r|u\leq r\leq t\})$ are independent. Furthermore we assume that $W:=(W^{s,t})_{(s,t)\in \Delta_T}$ satisfies the following:
		\item There exists a control $\chi:\Delta_T\rightarrow [0,\infty)$ with $\chi(t,t+h)\in O(h)$, for any $t\in [0,T]$, such that $\mathbb{P}$ almost surely
		\begin{align*}
			\left|\dfrac{\partial}{\partial r}W_r^{s,t,i}\right|\leq C\dfrac{\chi(s,t)^{1/2}}{t-s},
		\end{align*}
		for almost every $r\in [0,T]$ and $i=1,...,d$.
		\item There exists $m\in \mathbb{N}$, $m\geq 2$, such that the following algebraic relation holds true: For every $s\leq u\leq t$ in $[0,T]$
		\begin{align*}
			&\E\left[S_{s,t}^{s,t,m}\right]=\E\left[S_{s,u}^{s,u,m}\right]\otimes \E\left[S_{u,t}^{u,t,m}\right].
		\end{align*}
		\item For any $(s,t)\in \Delta_T$ it holds for $i=1,...,d$
		\begin{align*}
			\E\left[W^{s,t,i}_t-W^{s,t,i}_s\right]=0.
		\end{align*}
		\item There exist a continuous function $A:[0,T]\rightarrow \mathbb{R}^{d\times d}, t\mapsto (a_{ij}(t))_{1\leq i,j\leq d}$, such that
		\begin{align*}
			\lim\limits_{h\downarrow 0}\frac{1}{h}\E\left[I_{t,t+h}^{t,t+h,(i,j)}\right]=a_{ij}(t),
		\end{align*}
		for all $t\in [0,T]$ and $1\leq i,j\leq d$.
	\end{enumerate}
\end{assumption}

\begin{remark}\label{cubature_remark}
	The second part of the assumption is always satisfied if the expected signature of $W^{s,t}$ coincides up to a certain order with the expected signature of a square-integrable continuous martingale $(M_t)_{t\in [0,T]}$ (possibly defined on some
	different probability space), such that for all $(s,t)\in \Delta_T$
	\begin{align*}
		\E\left[S_{s,t}^m[\circ M]\right]=\E\left[S_{s,t}^m[W^{s,t}]\right],
	\end{align*}
	for some $m\ge 2$. This can be seen by the following observation: Since $S_{u,t}^m[\circ M]$ is measurable with respect to $\sigma(M_r-M_u|r\geq u)$ and $S_{s,u}^m[\circ M]$ is measurable with respect to $\sigma(M_r|r\leq u)$, independence of the increments implies
	\begin{align*}
		\E\left[S_{s,u}^m[\circ M]\otimes S_{u,t}^m[\circ M]\right]=\E\left[S_{s,u}^m[\circ M]\right]\otimes \E\left[S_{u,t}^m[\circ M]\right].
	\end{align*}
	In a similar fashion as its done in \cite{article} for a process of bounded variation, one can show that for all $s\leq u\leq t$ in $[0,T]$ it holds almost surely
	\begin{align*}
		S_{s,u}[\circ M]\otimes S_{u,t}[\circ M]=S_{s,t}[\circ M],
	\end{align*}
	hence
	\begin{align*}
		\E\left[S_{s,t}^m[\circ M]\right]=\E\left[S_{s,u}^m[\circ M]\right]\otimes \E\left[S_{u,t}^m[\circ M]\right].
	\end{align*}
	An assumption similar to (iii) was also introduced in \cite{SPDE} to obtain stability estimates in the case where the time-dependent coefficients are not differentiable in $t$. 
\end{remark}

\begin{example}
	The most simple example for a family of random paths $(W^{s,t})_{(s,t)\in \Delta_T}$ that satisfies Assumption \ref{a1} with $m=2$ and $\chi(s,t)=(t-s)$ is given by 
	\begin{align*}
		W^{s,t}_r=\frac{r-s}{\sqrt{t-s}}X^{s,t},
	\end{align*}
	where $(X^{s,t})_{(s,t)\in \Delta_T}$ is a family of independent, identically distributed random variables with $\E\left[X^{s,t}\right]=0$, $\text{Var}(X^{s,t})=\sigma>0$ and $|X^{s,t}|\leq C$. Such a family of random variables can be constructed by the Kolmogorov extension theorem.
\end{example}

As already mentioned in the beginning, our setting also covers cubature approximations on Wiener space, which is clear from the next example.

\begin{example}
	If $W:\Omega\rightarrow \mathcal{C}^{1-\text{Höl}}([0,1],\mathbb{R}^d)$ defines a cubature formula of degree $m$ in the sense of the definition in \cite{article}, i.e. $W$ is discrete and it satisfies 
	\begin{align*}
		\E\left[S_{0,1}^m[W]\right]=\E\left[S_{0,1}^m[\circ B]\right],
	\end{align*}
	for some $m\geq 2$, then given a family of independent copies $(\tilde{W}^{s,t})_{(s,t)\in \Delta_T}$ of $W$,
	\begin{align*}
		W^{s,t}_r:=\sqrt{t-s}\tilde{W}^{s,t}(\frac{r-s}{t-s})
	\end{align*}
	satisfies Assumption \ref{a1} with order $m$ and control $\chi(s,t)=(t-s)$.
\end{example}

In order to formulate our main result, we need to introduce the definition of an evolution system and a martingale problem. 

\begin{definition}
	A two parameter family of bounded linear operators $(Q_{s,t})_{s\leq t}$, $s,t\in [0,T]$ on a Banach space $(\mathbb{B},\|\cdot\|)$ is called evolution system, if 
	\begin{enumerate}[label=(\arabic*)]
		\item $Q_{s,s}=Id_{\mathbb{B}}$,
		\item $Q_{s,t}=Q_{s,u}Q_{u,t}$, for $0\leq s\leq u\leq t\leq T$ (evolution property).
	\end{enumerate}
	The evolution system is called strongly continuous, if the map $(s,t)\mapsto Q_{s,t}$ is strongly continuous for $0\leq s\leq t\leq T$. The evolution system is called an evolution system of contractions, if 
	\begin{align*}
		\|Q_{s,t}f\|\leq \|f\|,
	\end{align*}
	for any $f\in \mathbb{B}$ and all $0\leq s\leq t\leq T$. For a strongly continuous evolution system we can define the corresponding family of generators $\mathcal{A}_t:\mathcal{D}(\mathcal{A}_t)\subset \mathbb{B}\rightarrow \mathbb{B}$ by 
	\begin{align*}
		\mathcal{A}_tf:=\lim\limits_{h\downarrow 0}\frac{1}{h}\left(Q_{t,t+h}f-f\right),
	\end{align*}
	on the domain
	\begin{align*}
		\mathcal{D}(\mathcal{A}_t):=\{f\in \mathbb{B}|\lim\limits_{h\downarrow 0}\frac{1}{h}\left(Q_{t,t+h}f-f\right)\text{ exists}\}.
	\end{align*}
	Let 
	\begin{align*}
		\hat{C}(\mathbb{R}^d,\mathbb{R}):=\{f\in \mathcal{C}(\mathbb{R}^d,\mathbb{R})|\forall \epsilon>0,\text{  }\exists K\subset \mathbb{R}^d\text{ compact, s.t. }|f(x)|<\epsilon,\forall x\notin K \}.
	\end{align*}
	A strongly continuous evolution system of contractions on $(\hat{C}(\mathbb{R}^d,\mathbb{R}),|\cdot|_\infty)$ is called a Feller evolution system. An evolution system on $(\hat{C}(\mathbb{R}^d,\mathbb{R}),|\cdot|_\infty)$ is called positivity preserving, if $Q_{s,t}f\geq0$, for $f\geq 0$.
\end{definition}

\begin{definition}
	Let $(\mathcal{A}_t)_{t\in [0,T]}$ be a family of linear operators on $\mathcal{C}_c^\infty (\mathbb{R}^d,\mathbb{R})$ and $x\in \mathbb{R}^d$. Denote by $(X_{t})_{t\in [0,T]}$ the canonical process on $\mathcal{C}([0,T],\mathbb{R}^d)$ and by $(\mathcal{B}_t)_{t\in [0,T]}$ its natural filtration. We call a probability measure $Q_x$ on $(\mathcal{C}([0,T],\mathbb{R}^d),\vee_{t\in [0,T]}\mathcal{B}_t,(\mathcal{B}_t)_{t\in [0,T]})$ a solution to the martingale problem $(x,(\mathcal{A}_t)_{t\in [0,T]},\mathcal{C}_c^\infty(\mathbb{R}^d,\mathbb{R}))$, if $Q_x[X_0=x]=1$ and for any $f\in \mathcal{C}_c^\infty(\mathbb{R}^d,\mathbb{R})$ the process 
	\begin{align*}
		f(X_t)-f(x)-\int_{0}^{t}\mathcal{A}_{s}f(X_s)ds
	\end{align*}
	is a $(Q_x,(\mathcal{B}_t)_{t\in [0,T]})$ martingale.
\end{definition}

\subsection{Main Result}

Let $V_0,...,V_d\in \mathcal{C}^\infty_b(\mathbb{R}^d,\mathbb{R}^d)$ be smooth vector fields on $\mathbb{R}^d$. For any $(s,t)\in \Delta_T$ and $f\in \mathcal{C}(\mathbb{R}^d,\mathbb{R})$ we define
\begin{align*}
	Q_{s,t}f(x):=\E\left[f(X_t^{s,t,x})\right],
\end{align*}
where $X^{s,t,x}$ is the solution to

\begin{equation}\label{state}
	\begin{aligned}
		dX^{s,t;x}_r&= \sum_{i=0}^{d}V_i(X^{s,t;x}_r)dW^{s,t,i}_r\,,
		\quad r\in [s,t]
		\\
		X^{s,t;s}_s&=x.
	\end{aligned}
\end{equation}

Given $(s,t)\in \Delta_T$ and any partition $\pi=\{s=t_0<t_1<...<t_k=t\}$, we define
\begin{align*}
	Q_{s,t}^\pi:=\prod_{i=0}^{k-1}Q_{t_i,t_{i+1}}.
\end{align*}

The main Theorem of this paper is as follows.

\begin{theorem}
	\label{thm:main}
	Suppose that \( (W^{s,t})_{(s,t)\in \Delta_T}\) is a family of random paths which satisfy \cref{a1}, with control $\chi$, coefficients $A:[0,T]\rightarrow \mathbb{R}^{d\times d}$ and \( m\ge2.\) Then there exists a unique strongly continuous, positivity preserving Feller evolution system $(P_{s,t})_{(s,t)\in \Delta_T}$ on $\hat{C}(\mathbb{R}^d,\mathbb{R})$, such that for any $(s,t)\in \Delta_T$ and any sequence of partitions $(\pi_n)_n$ of $[s,t]$ with $|\pi_n|\rightarrow 0$ it holds
	\begin{align*}
		\sup_{\substack{f\in \hat{C}(\mathbb{R}^d,\mathbb{R})\cap \mathcal{C}_b^{m+1}(\mathbb{R}^d,\mathbb{R})\\|f|_{\mathcal{C}_b^{m+1}}\leq 1}} |Q_{s,t}^{\pi_n} f-P_{s,t}f|_\infty\rightarrow 0,
	\end{align*}
	as $n\rightarrow \infty$. The corresponding family of generators  $(\mathcal{A}_t)_{t\in [0,T]}$ for $(P_{s,t})_{(s,t)\in \Delta_T}$ satisfies
	\begin{align*}
		\mathcal{C}_c^\infty(\mathbb{R}^d,\mathbb{R})\subseteq \mathcal{D}(\mathcal{A}_t)
	\end{align*}
	and
	\begin{align*}
		\mathcal{A}_tf=V_0f+\frac{1}{2}\sum_{i=1}^{d}\sum_{j=1}^{d}a_{ij}(t)V_iV_jf,
	\end{align*}
	for any $f\in \mathcal{C}_c^\infty(\mathbb{R}^d,\mathbb{R})$. For any partition $\pi=\{s=t_0<t_1<...<t_k=t\}$ the following estimate holds true:
	\begin{align*}
		\sup_{\substack{f\in \hat{C}(\mathbb{R}^d,\mathbb{R})\cap \mathcal{C}_b^{m+1}(\mathbb{R}^d,\mathbb{R})\\|f|_{\mathcal{C}_b^{m+1}}\leq 1}} |Q_{s,t}^\pi f-P_{s,t}f|_\infty&\leq  2^{\frac{m+1}{2}}e^{\tilde{C}\tilde{\chi}(s,t)}\zeta(\frac{m+1}{2})\chi(s,t)\max_{1\leq i\leq k}\{\tilde{\chi}^{\frac{m-1}{2}}(t_i,t_{i+1})\},
	\end{align*}
	where $\tilde{\chi}(s,t)=\chi(s,t)+(t-s)\sqrt{\chi(s,t)}+(t-s)$. If in addition $a_v(t,x)=(a_v(t,x)_{ij})_{1\leq i,j\leq d}:=(a_{ij}(t)V_i(x)\frac{\partial}{\partial x}V_j(x))_{1\leq i, j\leq d}$ is non-degenerate, i.e.
	\begin{align*}
		\inf_{\|z\|\leq R}\inf_{x\in \mathbb{R}^d,\|x\|=1}x^Ta_v(t,z)x>0,
	\end{align*}
	for all $R>0$ and all $t\in [0,T]$, then for any $x\in \mathbb{R^d}$ there exists a unique solution $(X_t^x)_{t\in [0,T]}$ to the martingale problem $(x,(\mathcal{A}_t)_{t\in [0,T]},\mathcal{C}_c^\infty(\mathbb{R}^d,\mathbb{R}))$ and it holds for any $f\in \mathcal{C}_c^\infty(\mathbb{R}^d,\mathbb{R})$ and $x\in \mathbb{R}^d$
	\begin{align*}
		P_{0,T}f(x)=\mathbb{E}\left[f(X_T^x)\right].
	\end{align*}
\end{theorem}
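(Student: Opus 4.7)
The strategy is to apply the multiplicative sewing lemma (\cref{sewing}) to $\mu_{s,t}:=Q_{s,t}$ on the Banach spaces $X:=\hat{C}(\mathbb{R}^d,\mathbb{R})\cap \mathcal{C}_b^{m+1}(\mathbb{R}^d,\mathbb{R})$ (with the $\mathcal{C}_b^{m+1}$-norm) and $Y:=\hat{C}(\mathbb{R}^d,\mathbb{R})$ (with the sup-norm), and then to identify the resulting evolution system as $(P_{s,t})$. Contractivity $\|Q_{s,t}\|_{L(Y,Y)}\le 1$ follows from Jensen's inequality, and the property $Q_{s,t}(\hat{C})\subseteq \hat{C}$ is a consequence of the pathwise Lipschitz continuity and properness of $x\mapsto X_t^{s,t,x}$, which rests on \cref{a1}(i) and $V_i\in \mathcal{C}_b^\infty$. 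Differentiating \eqref{state} in $x$ up to order $m+1$ and applying Gronwall to the resulting variational equations yields $|Q_{s,t}f|_{\mathcal{C}_b^{m+1}}\leq e^{C\tilde{\chi}(s,t)}|f|_{\mathcal{C}_b^{m+1}}$, so that after telescoping one may take $\epsilon_1(s,t)=e^{\tilde{C}\tilde{\chi}(s,t)}$ and $\epsilon_2(s,t)=1$ in \cref{sewing}.

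The technical heart of the argument is the almost-multiplicativity estimate, derived from a pathwise ODE Taylor expansion. Since $W^{s,t}$ has absolutely continuous trajectories, one has
\[
f(X_t^{s,t,x})=\sum_{\alpha\in \mathcal{A}_m}V_\alpha f(x)\,I_{s,t}^{s,t,\alpha}+R_{m+1}(f,x,s,t),
\]
with $R_{m+1}$ a weighted sum of $V_\alpha f(\xi^\alpha)\,I^{s,t,\alpha}_{s,t}$ over $\|\alpha\|=m+1$. \cref{a1}(i) yields the uniform bound $|I^{s,t,\alpha}_{s,t}|\lesssim \tilde{\chi}(s,t)^{\|\alpha\|/2}$ and hence $\sup_x|R_{m+1}|\lesssim |f|_{\mathcal{C}_b^{m+1}}\tilde{\chi}(s,t)^{(m+1)/2}$. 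Taking expectations produces an expansion of $Q_{s,t}f(x)$ in terms of $\mathbb{E}[S^{s,t,m}_{s,t}]$. Applying the same expansion first to $Q_{u,t}f$ and then to $Q_{s,u}(V_\alpha f)$, and regrouping the resulting products of iterated integrals via the concatenation rule $\gamma=\beta*\alpha$, yields an analogous formula for $Q_{s,u}Q_{u,t}f(x)$ involving the tensor product $\mathbb{E}[S^{s,u,m}_{s,u}]\otimes \mathbb{E}[S^{u,t,m}_{u,t}]$. By \cref{a1}(ii) these two tensors agree on $\mathcal{A}_m$, so subtraction isolates the remainders and gives the almost-multiplicativity bound required by \cref{sewing} with exponent $z=(m+1)/2$; the mean-zero condition \cref{a1}(iii) is used to refine the leading full-interval factor to $\chi(s,t)$ in the final rate.

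Positivity preservation and strong continuity of $(P_{s,t})$ on $\hat{C}$ are inherited from those of $(Q_{s,t})$ via the last assertion of \cref{sewing}: positivity is closed under uniform limits, and $|Q_{s,t}f-f|_\infty\to 0$ as $t-s\to 0$ (for $f\in \hat{C}$) follows from uniform continuity of $f$ combined with the Lipschitz flow bound. For the generator, the Taylor expansion at order $m\geq 2$, together with \cref{a1}(iii) (which cancels the linear-in-$W^i$ contributions for $i\geq 1$) and \cref{a1}(iv) (which identifies $\lim \mathbb{E}[I^{(i,j)}_{t,t+h}]/h=a_{ij}(t)$), yields
\[
\lim_{h\downarrow 0}\frac{Q_{t,t+h}f(x)-f(x)}{h}=V_0f(x)+\frac{1}{2}\sum_{i,j=1}^{d}a_{ij}(t)V_iV_jf(x)
\]
for $f\in \mathcal{C}_c^\infty$, and the quantitative bound $|P_{t,t+h}-Q_{t,t+h}|_{L(X,Y)}=O(h^{(m+1)/2})$ transfers this limit from $Q$ to $P$.

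Finally, under the non-degeneracy hypothesis on $a_v$, the operator $\mathcal{A}_t$ is a uniformly elliptic (time-inhomogeneous) second-order operator with smooth bounded coefficients, so classical Stroock--Varadhan theory provides a unique solution $(X_t^x)$ to the martingale problem $(x,(\mathcal{A}_t),\mathcal{C}_c^\infty)$. The induced map $\tilde{P}_{0,T}f(x):=\mathbb{E}[f(X_T^x)]$ defines a strongly continuous Feller evolution system sharing the generator of $(P_{s,t})$ on $\mathcal{C}_c^\infty$, so uniqueness in \cref{sewing} forces $\tilde{P}=P$. The main obstacle of the whole program is the almost-multiplicativity estimate, where the combinatorics of the tensor-product decomposition from \cref{a1}(ii) must be tracked carefully against the Taylor remainder to extract both the sharp exponent $(m+1)/2$ and the improved prefactor $\chi(s,t)$ appearing in the final rate.
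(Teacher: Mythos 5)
Up to the final identification with the martingale problem, your proposal follows essentially the same route as the paper: the sewing lemma \cref{sewing} applied to $\mu_{s,t}=Q_{s,t}$ on the pair $X=\hat{C}(\mathbb{R}^d,\mathbb{R})\cap\mathcal{C}_b^{m+1}(\mathbb{R}^d,\mathbb{R})$, $Y=\hat{C}(\mathbb{R}^d,\mathbb{R})$; stability through the variational equations and Gronwall (as in \cref{VarEst}, \cref{directional}, \cref{stability}, \cref{growth}); almost multiplicativity with $z=\frac{m+1}{2}$ via the stochastic Taylor expansion, the bound $|I^{s,t,\alpha}_{s,t}|\lesssim \tilde{\chi}(s,t)^{\|\alpha\|/2}$ from \cref{a1}(i), independence of $W^{s,u}$ and $W^{u,t}$, and the multiplicativity of the expected truncated signature from \cref{a1}(ii) (this is \cref{commutator}); and the generator computed on $\mathcal{C}_c^\infty$ from the order-two expansion together with \cref{a1}(iii), (iv), transferring from $Q$ to $P$ by the quantitative rate, which works since $\tilde{\chi}(t,t+h)=O(h)$ and $\frac{m+1}{2}>1$. (Your side remark that (iii) upgrades the prefactor from $\tilde\chi(s,t)$ to $\chi(s,t)$ is not how the estimate is obtained and is unsubstantiated, but this is cosmetic.)

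The genuine gap is in the last step. You construct the Stroock--Varadhan solution $(X^x_t)$, set $\tilde{P}_{0,T}f(x):=\mathbb{E}[f(X_T^x)]$, and conclude $\tilde{P}=P$ because both ``share the generator on $\mathcal{C}_c^\infty$'' and by ``uniqueness in \cref{sewing}.'' The uniqueness assertion of \cref{sewing} only characterizes $\psi_{s,t}$ as the limit of the products $\mu_{s,t}^{\pi_n}$; it says nothing about an evolution system that merely has the same generator on $\mathcal{C}_c^\infty$, and you have not shown that $\tilde{P}_{s,t}$ arises as a limit of the $Q^{\pi_n}_{s,t}$. Moreover, for time-inhomogeneous evolution systems, agreement of the generators on $\mathcal{C}_c^\infty$ does not by itself force equality: one needs a well-posedness statement (that $\mathcal{C}_c^\infty$ is a core, or uniqueness for the associated Kolmogorov equation), and that is exactly the missing ingredient the non-degeneracy hypothesis is there to supply. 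The paper closes this by a different argument: it represents $P_{s,t}f(x)=\int f(y)\,p_{s,t}(x,dy)$ via Riesz, derives the forward equation $\partial_t\int f\,p_{s,t}(x,dy)=\int \mathcal{A}_tf\,p_{s,t}(x,dy)$ with $p_{s,s}=\delta_x$ from the evolution property and strong continuity, and then invokes the uniqueness/mimicking theorem of Bentata--Cont (valid under the stated non-degeneracy of $a_v$) to conclude that $p_{s,t}(x,dy)$ is the conditional law of the unique martingale-problem solution. To repair your argument you would either need to reproduce such a forward-equation uniqueness step, or prove directly that the two evolution systems coincide (e.g.\ by a core argument for $(\mathcal{A}_t)$), neither of which follows from \cref{sewing}.
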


\section{Proof of the main result}

The main ingredient to the proof of our main Theorem \ref{thm:main} will be the multiplicative sewing lemma Theorem \ref{sewing}. To verify the assumptions needed in Theorem \ref{sewing} we will split up the proof of our main result into the following three parts: In the first part we will derive some stability estimates for the family of operators $(Q_{s,t})_{(s,t)\in \Delta_T}$ in order to control $Q^\pi_{s,t}$ uniformly w.r.t.\ the partition $\pi$. In the second part we will establish  suitable estimates for the commutators $Q_{s,t}-Q_{s,u}Q_{u,t}$. In the last step we finally apply Theorem \ref{sewing} and conclude our proof by determining the corresponding generator of the limiting evolution system on a subset of its domain.

\subsection{Stability Analysis}
\label{ssec:stability}

As already mentioned, the main goal of this section is to provide some stability estimates for $Q_{s,t}$, in order to derive some growth rate $\epsilon$, such that for some appropriate norm $|\cdot |$ it holds $|Q_{s,t}^\pi|\leq \epsilon(s,t)$, independent of the given partition $\pi$ of $[s,t]$. 

Our analysis of $Q_{s,t}$ heavily relies on the regularity of the solution map $\phi:x\mapsto X^{s,t,x}$. In the following we will provide some useful results regarding the latter, which we will need later on.

\begin{lemma}\label{boundSol}
	Let $(s,t)\in \Delta_T$ and $x\in \mathbb{R}$ be fixed. Then the equation (\ref{state}) has a unique (pathwise) solution $X^{s,t,x}\in \mathcal{C}^{1\text{-var}}([s,t],\mathbb{R}^d)$. Furthermore, for any $p\geq 1$ the following pathwise estimate holds true:
	\begin{align*}
		\sup_{u\in [s,t]}|X_{s,u}^{s,t,x}|^p\leq \tilde{C}(p,d,\left|V\right|_{\mathcal{C}^1_b})\tilde{\chi}(s,t)e^{\tilde{C}(p,d,\left|V\right|_{\mathcal{C}^1_b})\tilde{\chi}(s,t)},
	\end{align*}
	for some $\tilde{C}(p,d,\left|V\right|_{\mathcal{C}^1_b})>0$, where $X_{s,u}^{s,t,x}:=X_u^{s,t,x}-x$.
\end{lemma}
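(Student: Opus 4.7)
The plan is to split the argument into two independent pieces: existence and uniqueness of $X^{s,t,x}$, and the quantitative estimate via Gronwall's lemma. First I would observe that \cref{a1}(ii) provides the essential bound $\bigl\|\tfrac{\partial}{\partial r} W^{s,t,i}\bigr\|_{L^\infty} \leq C\chi(s,t)^{1/2}/(t-s)$, so that pathwise (\ref{state}) reduces to the ordinary differential equation
\begin{align*}
\tfrac{d}{dr} X_r = V_0(X_r) + \sum_{i=1}^d V_i(X_r)\,\tfrac{\partial}{\partial r} W^{s,t,i}_r, \quad X_s = x,
\end{align*}
whose right-hand side is globally Lipschitz in the state variable (since each $V_i \in \mathcal{C}_b^\infty$) and essentially bounded in time. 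Applying the classical Cauchy--Lipschitz theorem on $[s,t]$ yields the unique absolutely continuous solution, and the integrability of $\tfrac{\partial}{\partial r} W^{s,t}$ then shows $X^{s,t,x} \in \mathcal{C}^{1\text{-}\mathrm{var}}([s,t],\mathbb{R}^d)$.

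For the estimate, I would let $Y_u := X_u^{s,t,x} - x$ and decompose $V_i(X_r) = V_i(x) + [V_i(X_r) - V_i(x)]$ inside the integral form. Writing $K := \left|V\right|_{\mathcal{C}^1_b}$, the pointwise bound on $V_i$ and its $K$-Lipschitz continuity give
\begin{align*}
|Y_u| \leq K(t-s) + K \sum_{i=1}^d |W^{s,t,i}_u - W^{s,t,i}_s| + K\int_s^u |Y_r|\Bigl(1 + \sum_{i=1}^d \bigl|\tfrac{\partial}{\partial r} W^{s,t,i}_r\bigr|\Bigr)dr.
\end{align*}
Using the derivative bound from \cref{a1}(ii) once more, both $|W^{s,t,i}_u - W^{s,t,i}_s|$ and the total variation $\int_s^t \bigl|\tfrac{\partial}{\partial r} W^{s,t,i}_r\bigr|dr$ are controlled by $C\chi(s,t)^{1/2}$, so Gronwall's inequality produces
\begin{align*}
\sup_{u \in [s,t]} |Y_u| \leq \tilde K\bigl[(t-s) + \chi(s,t)^{1/2}\bigr]\exp\bigl(\tilde K[(t-s) + \chi(s,t)^{1/2}]\bigr).
\end{align*}

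The remaining step is to raise to the $p$-th power and repackage the right-hand side into the claimed form $\tilde C\tilde\chi(s,t) e^{\tilde C\tilde\chi(s,t)}$. Using $(a+b)^p \leq 2^{p-1}(a^p + b^p)$ the prefactor reduces to a multiple of $(t-s)^p + \chi(s,t)^{p/2}$, and each summand is dominated by $\tilde\chi(s,t)$ times a constant depending on $T$, $\chi(0,T)$, $d$, $K$, $p$: concretely $(t-s)^p \leq T^{p-1}(t-s) \leq T^{p-1}\tilde\chi(s,t)$ and, for $p \geq 2$, $\chi(s,t)^{p/2} \leq \chi(0,T)^{(p-2)/2}\chi(s,t) \leq \chi(0,T)^{(p-2)/2}\tilde\chi(s,t)$; an analogous estimate handles the quantities in the exponent. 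I expect the main obstacle to be this bookkeeping step: one must exploit the three-term structure of $\tilde\chi = \chi + (t-s)\sqrt\chi + (t-s)$ and interpolate between $\chi$, $\sqrt\chi$, and $(t-s)$ on $[0,T]$ in order to absorb mixed terms into a single factor $\tilde C\tilde\chi$ rather than a sum, while keeping $\tilde C$ independent of $(s,t)$.
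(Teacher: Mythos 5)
Your existence/uniqueness step is fine and matches the paper in spirit (the paper simply cites Theorems 3.7--3.8 of Friz--Victoir instead of re-running Cauchy--Lipschitz), but your estimate takes a genuinely different route from the paper's, and the difference is located exactly where the claimed bound is hard. The paper does not bound $\sup_u|X_u^{s,t,x}-x|$ and then raise to the $p$-th power: it expands $|X^{s,t,x}_{s,u}|^p$ itself by the chain rule and then expands the resulting integrand $|X_{s,r}^{s,t,x}|^{p-2}X_{s,r}^{s,t,x}\cdot V_i(X_r^{s,t,x})$ once more in $r$ (it vanishes at $r=s$), so that every surviving term carries two integrations against $dW^{s,t}$ or $dr$. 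After inserting $|\partial_r W_r^{s,t,i}|\leq C\chi(s,t)^{1/2}/(t-s)$, this produces a Gronwall inequality whose inhomogeneity and kernel are both of order $\tilde{\chi}(s,t)$, hence the prefactor $\tilde{\chi}(s,t)$ in the conclusion, for the $p$-th power directly and with a constant not depending on $T$. Your first-order Gronwall bound can only give $\sup_u|X_{s,u}^{s,t,x}|\lesssim\bigl[(t-s)+\chi(s,t)^{1/2}\bigr]e^{\cdots}$, so after taking $p$-th powers the prefactor is of order $\chi(s,t)^{p/2}$. Your repackaging $\chi^{p/2}\leq\chi(0,T)^{(p-2)/2}\chi$ only works for $p\geq 2$; for $1\leq p<2$, which the statement includes, the inequality $\chi(s,t)^{p/2}\lesssim\tilde{\chi}(s,t)$ fails as $t-s\to 0$ (take $\chi(s,t)=t-s$, then $\chi^{1/2}=\sqrt{t-s}$ while $\tilde{\chi}\sim 2(t-s)$), so your argument does not close on that range, and you leave it open. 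Moreover no post-processing of a first-order modulus bound can close it: for $d=1$, $V_0=0$, $V_1\equiv 1$ and the paths of the paper's first Example, $\sup_u|X_{s,u}^{s,t,x}|=\sqrt{t-s}\,|X^{s,t}|$, which on a positive-probability event is of order $\sqrt{t-s}$ and not $O(\tilde{\chi})$. This shows the $p\in[1,2)$ case is the genuinely delicate one (note the paper's own expansion uses that $|X_{s,r}|^{p-2}X_{s,r}\cdot V_i(X_r)$ vanishes at $r=s$, which likewise needs $p\geq 2$), so you should either adopt the paper's second-order expansion of $|X_{s,u}|^p$ or explicitly restrict the range of $p$ you prove.

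A secondary point: your constants acquire dependence on $T$ and $\chi(0,T)$, through $(t-s)^p\leq T^{p-1}(t-s)$, $\chi^{p/2}\leq\chi(0,T)^{(p-2)/2}\chi$ and the absorption of $e^{\tilde K\sqrt{\chi}}$ into a constant, whereas the statement asserts $\tilde{C}=\tilde{C}(p,d,|V|_{\mathcal{C}^1_b})$; in the paper all time integrals are absorbed into the exponent $\tilde{C}\tilde{\chi}(s,t)$ by Gronwall, so no such dependence arises. This is minor compared with the gap above, but it is a second place where the first-order route delivers strictly less than what is claimed.
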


\begin{proof}
	Pathwise existence and uniqueness of the solution can be proven in a similar way as it is done in Theorem 3.7 and Theorem 3.8 of \cite{Fritz}. Let $s\leq u\leq t$ be fixed. Then we get
	\begin{align*}
		|X_{s,u}^{s,t,x}|^p
		&=p\sum_{i=0}^{d}\int_{s}^{u}|X_{s,r}^{s,t,x}|^{p-2}X_{s,r}^{s,t,x}\cdot V_i(X_r^{s,t,x})dW_r^{s,t,i}
	\end{align*}
	Furthermore we have
	\begin{align*}
		&|X_{s,r}^{s,t,x}|^{p-2}X_{s,r}^{s,t,x}\cdot V_i(X_r^{s,t,x})\\
		&= \sum_{j=0}^{d}\int_{s}^{r}  
		|X_{s,l}^{s,t,x}|^{p-2}
		X_{s,l}^{s,t,x}\cdot DV_i(
		X_{l}^{s,t,x})V_j(
		X_{l}^{s,t,x})dW^{s,t,j}_l \\
		&\quad + (p-2)\sum_{j=0}^{d}\int_{s}^{r}  |X_{s,l}^{s,t,x}|^{p-4}\left(X_{s,l}^{s,t,x}\cdot V_j(X_l^{s,t,x}) \right)\left(X_{s,l}^{s,t,x}\cdot V_i(X_l^{s,t})\right)dW^{s,t,j}_l\\
		&\quad + \sum_{j=0}^{d}\int_{s}^{r}  |X_{s,l}^{s,t,x}|^{p-2}V_i(X_l^{s,t,x})\cdot V_j(X_l^{s,t,x})dW^{s,t,j}_l.
	\end{align*}
	Together this leads to the expansion
	\begin{align*}
		&|X_{s,u}^{s,t,x}|^p\\
		&=p\int_{s}^{u}|X_{s,r}^{s,t,x}|^{p-2}X_{s,r}^{s,t,x}\cdot V_0(X_r^{s,t,x})dr\\
		&\quad +p\sum_{i=1}^{d}\sum_{j=0}^{d}\int_{s}^{u}\int_{s}^{r}  
		|X_{s,l}^{s,t,x}|^{p-2}
		X_{s,l}^{s,t,x}\cdot DV_i(
		X_{l}^{s,t,x})V_j(
		X_{l}^{s,t,x})dW^{s,t,j}_ldW_r^{s,t,i} \\
		&\quad + p(p-2)\sum_{i=1}^{d}\sum_{j=0}^{d}\int_{s}^{u}\int_{s}^{r}  |X_{s,l}^{s,t,x}|^{p-4}\left(X_{s,l}^{s,t,x}\cdot V_j(X_l^{s,t,x}) \right)\left(X_{s,l}^{s,t,x}\cdot V_i(X_l^{s,t})\right)dW^{s,t,j}_ldW_r^{s,t,i}\\
		&\quad + p\sum_{i=1}^{d}\sum_{j=0}^{d}\int_{s}^{u}\int_{s}^{r}  |X_{s,l}^{s,t,x}|^{p-2}V_i(X_l^{s,t,x})\cdot V_j(X_l^{s,t,x})dW^{s,t,j}_ldW_r^{s,t,i}.
	\end{align*}
	By Assumption \ref{a1} there exists a $C>0$, such that for $i=1,...,d$
	\begin{align*}
		|\frac{\partial}{\partial r}W_r^{s,t,i}|\leq C\frac{\chi(s,t)^{1/2}}{t-s}
	\end{align*}
	and 
	\begin{align*}
		\int_{s}^{t}d|W_r^{s,t,i}|\leq C\chi(s,t)^{1/2}.
	\end{align*}
	Therefore an application of Fubini's theorem and Cauchy-Schwarz inequality yields
	\begin{align*}
		|X_{s,u}^{s,t,x}|^p&\leq Cp\int_{s}^{u}|X_{s,r}^{s,t,x}|^{p-1}|V_0|_\infty  dr\\
		&\quad + C^2p\frac{\chi(s,t)}{t-s}\sum_{i=1}^{d}\sum_{j=1}^{d}\int_{s}^{u}|X_{s,r}^{s,t,x}|^{p-1}|DV_i|_\infty |V_j|_\infty dr\\
		&\quad + C^2p(p-1)\frac{\chi(s,t)}{t-s}\sum_{i=1}^{d}\sum_{j=1}^{d}\int_{s}^{u}|X_{s,r}^{s,t,x}|^{p-2}|V_i|_\infty |V_j|_\infty dr\\
		&\quad + Cp\sqrt{\chi(s,t)}\sum_{i=1}^{d}\int_{s}^{u}|X_{s,r}^{s,t,x}|^{p-1}|DV_i|_\infty |V_0|_\infty dr\\
		&\quad + Cp(p-1)\sqrt{\chi(s,t)}\sum_{i=1}^{d}\int_{s}^{u}|X_{s,r}^{s,t,x}|^{p-2}|V_i|_\infty |V_0|_\infty dr.
	\end{align*}
	Now Young's inequality leads to 
	\begin{align*}
		|X_{s,u}^{s,t,x}|^p&\leq \tilde{C}(p,d,\left|V\right|_{\mathcal{C}^1_b})\tilde{\chi}(s,t)\\
		&\quad +\tilde{C}(p,d,\left|V\right|_{\mathcal{C}^1_b})\int_{s}^{u}|X_{s,r}^{s,t,x}|^{p} dr\\
		&\quad + \tilde{C}(p,d,\left|V\right|_{\mathcal{C}^1_b})\frac{\chi(s,t)}{t-s}\sum_{j=1}^{d}\int_{s}^{u}|X_{s,r}^{s,t,x}|^{p} dr\\
		&\quad + \tilde{C}(p,d,\left|V\right|_{\mathcal{C}^1_b})\sqrt{\chi(s,t)}\int_{s}^{u}|X_{s,r}^{s,t,x}|^{p}dr.
	\end{align*}
	Taking the supremum over $[s,t]$, a simple application of Gronwall inequality shows 
	\begin{align*}
		\sup_{u\in [s,t]}|X_{s,u}^{s,t,x}|^p\leq \tilde{C}(p,d,\left|V\right|_{\mathcal{C}^1_b})\tilde{\chi}(s,t)e^{\tilde{C}(p,d,\left|V\right|_{\mathcal{C}^1_b})\tilde{\chi}(s,t)}.
	\end{align*}
\end{proof}

By formal differentiation, one can derive control ODEs satisfied pathwise by the directional derivatives of the solution map. The following Lemma provides estimates for this type of random ODE.

\begin{lemma}\label{VarEst}
	Let $(s,t)\in \Delta_T$ be fixed. For $p\geq 1$, $x\in \mathbb{R}^d$ and $i=0,...,d$, let $B^{s,t,x,i}\in L^p(\Omega,\mathcal{C}_b([s,t],\mathbb{R}^d)\cap \mathcal{C}^{\text{1-var}}([s,t],\mathbb{R}^d))$ and $\eta\in L^p(\Omega,\mathbb{R}^d)$, such that $B^{s,t,x,i}_s$ is independent of $\eta$ and $W^{s,t,i}$. Moreover we assume, that
	\begin{align*}
		\sup_{r\in [s,t]}\left|B_r^{s,t,x,i}-B_s^{s,t,x,i}\right|\leq \hat{C}^{s,t,x} \chi(s,t)^{1/2},
	\end{align*}
	$\mathbb{P}$-almost surely, for some $\hat{C}^{s,t,x}\in L^p(\Omega,\mathbb{R})$. 
	Then the equation 
	\begin{equation}\label{variational}
		\begin{aligned}
			dZ^{s,t,B,x}_r&=\sum_{i=0}^d B_r^{s,t,x,i}dW_r^{s,t,i}
			+ \sum_{i=0}^d DV_i(X_r^{s,t,x})Z_r^{s,t,B,x}dW_r^{s,t,i}\\
			Z^{s,t,B,x}_s&=\eta
		\end{aligned}
	\end{equation}
	has a unique solution $Z^{s,t,B,x}\in L^p(\Omega,\mathcal{C}_b([s,t],\mathbb{R}^d)\cap C^{1-var}([s,t],\mathbb{R}^d))$, such that the following estimate holds true $\mathbb{P}$-almost surely:
	\begin{equation}\label{variational_bound1}
		\begin{aligned}
		\sup_{s\leq u\leq t}|Z_u^{s,t,B,x}|^p&\leq \tilde{C}(p,d,\left|V\right|_{\mathcal{C}^2_b})\left(|\eta|^p+|B^{s,t,x}_s|^p+|\hat{C}^{s,t,x}|^p\right)e^{\tilde{C}(p,d,\left|V\right|_{\mathcal{C}^2_b})\tilde{\chi}(s,t)}.
		\end{aligned}
	\end{equation} 
	Furthermore it holds
	\begin{equation}
		\begin{aligned}
			&\E\left[|Z_t^{s,t,B,x}|^p\right]\leq\|\eta\|_{L^p}^pe^{\tilde{C}(p,d,\left|V\right|_{\mathcal{C}^2_b})\tilde{\chi}(s,t)}\\
			&\quad +\tilde{C}(p,d,\left|V\right|_{\mathcal{C}^2_b})\left(\|B^{s,t,x}_s\|_{L^p}^p+\|\hat{C}^{s,t,x}\|_{L^p}^p\right)\tilde{\chi}(s,t)e^{\tilde{C}(p,d,\left|V\right|_{\mathcal{C}^2_b})\tilde{\chi}(s,t)}.
		\end{aligned}
	\end{equation}
\end{lemma}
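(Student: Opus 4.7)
My plan is to follow closely the scheme of Lemma \ref{boundSol} for the pathwise estimate, and to use a variation-of-constants argument together with the independence structure of Assumption \ref{a1} for the expectation estimate. Pathwise existence and uniqueness follow directly from Cauchy-Lipschitz theory: since $W^{s,t}$ is absolutely continuous and the coefficients of \eqref{variational} are globally Lipschitz (as $V_i\in \mathcal{C}^2_b$), the system reduces pathwise to an inhomogeneous linear ODE with $L^\infty$ coefficients, and Picard iteration yields a unique solution in $\mathcal{C}^{1\text{-var}}([s,t],\mathbb{R}^d)$.

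For the pathwise bound \eqref{variational_bound1}, I would apply the pathwise chain rule to $|Z|^p$, obtaining
\[
|Z_u|^p = |\eta|^p + p\sum_{i=0}^d\int_s^u |Z_r|^{p-2} Z_r \cdot \bigl(B_r^{s,t,x,i} + DV_i(X_r^{s,t,x})Z_r\bigr)\,dW_r^{s,t,i},
\]
and then re-expand each integrand via a second application of the chain rule, so as to produce double iterated integrals against $W^{s,t,i}\otimes W^{s,t,j}$. Using the bounds from Assumption \ref{a1}(ii), the hypothesis $|B_r^{s,t,x,i}-B_s^{s,t,x,i}|\le \hat{C}^{s,t,x}\sqrt{\chi(s,t)}$, and Young's inequality to separate mixed powers of $|Z|$, $|B_s^{s,t,x}|$ and $|\hat{C}^{s,t,x}|$, I expect to arrive at an integral inequality of the form
\[
\sup_{u\in[s,r]}|Z_u|^p \le |\eta|^p + \tilde{C}\bigl(|B_s^{s,t,x}|^p + |\hat{C}^{s,t,x}|^p\bigr)\tilde{\chi}(s,t) + \tilde{C}\int_s^r \sup_{v\in[s,u]}|Z_v|^p\,d\lambda(u),
\]
for a control $\lambda$ satisfying $\lambda(s,t)\le \tilde{C}\tilde{\chi}(s,t)$. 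A standard Gronwall argument then yields \eqref{variational_bound1}.

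For the expectation bound, let $\Phi_{r,u}\in\mathbb{R}^{d\times d}$ denote the fundamental matrix solving $d\Phi=\sum_{i=0}^d DV_i(X^{s,t,x})\Phi\,dW^{s,t,i}$ on $[r,u]\subset[s,t]$ with $\Phi_{r,r}=I$. Applying the previous step to the homogeneous equation yields $\|\Phi_{s,t}\|_{\mathrm{op}}^p\le e^{\tilde{C}\tilde{\chi}(s,t)}$ pathwise, and Duhamel's formula gives $Z_t = \Phi_{s,t}\eta + \sum_{i=0}^d \int_s^t\Phi_{r,t}B_r^{s,t,x,i}\,dW_r^{s,t,i}$. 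The independence of $\eta$ from $W^{s,t}$ then allows the factorization $\E|\Phi_{s,t}\eta|^p\le \E\|\Phi_{s,t}\|_{\mathrm{op}}^p\cdot \E|\eta|^p\le \|\eta\|_{L^p}^pe^{\tilde{C}\tilde{\chi}(s,t)}$, which accounts for the first term of the claim. Splitting $B_r^{s,t,x,i} = B_s^{s,t,x,i} + (B_r^{s,t,x,i}-B_s^{s,t,x,i})$, the increment piece is controlled pathwise by $\hat{C}^{s,t,x}\chi(s,t)$ using the $L^1$-bound on $W^{s,t,i}$ and the assumed control on $B$, yielding the $\|\hat{C}^{s,t,x}\|_{L^p}^p\tilde{\chi}(s,t)$ contribution after taking expectations. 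For the remaining $B_s^{s,t,x,i}$ part, I would iterate Duhamel once more via $\Phi_{r,t}=I+\sum_j\int_r^t DV_j(X_u)\Phi_{u,t}\,dW_u^{s,t,j}$ to isolate the leading term $B_s^{s,t,x,i}(W_t^{s,t,i}-W_s^{s,t,i})$, whose mean vanishes by Assumption \ref{a1}(iii) combined with the independence of $B_s^{s,t,x}$ from $W^{s,t}$, while the remaining double iterated integral is of pathwise order $\chi(s,t)$.

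The main obstacle I anticipate is precisely obtaining the $\tilde{\chi}(s,t)$ factor in front of $\|B_s^{s,t,x}\|_{L^p}^p$ rather than the $\sqrt{\chi(s,t)}$ that a crude pathwise bound on the Duhamel integral would give: refining this scale at the level of $p$-th moments requires both the cancellation of the first-order mean stemming from Assumption \ref{a1}(iii) combined with the independence of $B_s^{s,t,x}$ with $W^{s,t}$, and a careful second-order expansion analogous to the one already used in the proof of Lemma \ref{boundSol}. Once this refined integral inequality for $\E|Z|^p$ is in place, Gronwall's lemma concludes the argument.
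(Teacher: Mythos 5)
Your treatment of existence/uniqueness and of the pathwise bound \eqref{variational_bound1} follows the paper's own route (chain rule on $|Z|^p$, a second expansion of the integrands, Young and Gronwall), so that part is fine. The divergence, and the gap, is in the moment estimate. The asserted pathwise bound $\|\Phi_{s,t}\|_{\mathrm{op}}^p\le e^{\tilde C\tilde\chi(s,t)}$ for the fundamental matrix is false: pathwise one only gets $\|\Phi_{s,t}\|\le\exp\bigl(C(\sqrt{\chi(s,t)}+(t-s))\bigr)$, and since $\Phi_{s,t}-I\approx\sum_{i\geq 1}DV_i(x)\,(W^{s,t,i}_t-W^{s,t,i}_s)$ is generically of size $\sqrt{\chi(s,t)}$ on a set of positive probability (e.g.\ for cubature paths), while $\tilde\chi(s,t)=\chi+(t-s)\sqrt{\chi}+(t-s)=O(t-s)$, no bound of the form $e^{\tilde C\tilde\chi}$ can hold pathwise on small intervals. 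What you actually get from Lemma \ref{VarEst}'s pathwise part (with $B=0$) is $\tilde C\,e^{\tilde C\tilde\chi}$ with an extra constant $\tilde C$, and the unit constant in front of $\|\eta\|_{L^p}^p$ is not cosmetic: it is exactly what the second line of \eqref{directional1} provides and what Lemma \ref{stability} and the partition-uniform bound of Lemma \ref{growth} need. Obtaining it is an expectation statement and already requires the same cancellation mechanism you reserve for the $B_s$ part, namely expanding to second order and killing $\E\bigl[\,|\eta|^{p-2}\eta\cdot DV_i(x)\eta\,(W^{s,t,i}_t-W^{s,t,i}_s)\bigr]$ via Assumption \ref{a1}(iii) and independence.

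Relatedly, the vanishing of $\E\bigl[B^{s,t,x,i}_s(W^{s,t,i}_t-W^{s,t,i}_s)\bigr]$ does not by itself control $\E|Z_t|^p$: the $p$-th power is nonlinear, so the centered increments must enter linearly under the expectation. Concretely, you must expand $|Z_t|^p$ to first order around its leading part and also handle the cross terms between $\Phi_{s,t}\eta$ and the Duhamel integral, where $\Phi$ is correlated with the increments of $W^{s,t}$; only after replacing $\Phi$ by $I$ up to errors that pair with $|D|=O(\sqrt{\chi})$ to give $O(\tilde\chi)$ do the means vanish and the $\tilde\chi(s,t)$ factor in front of $\|B^{s,t,x}_s\|_{L^p}^p$ emerge (and, as in the paper, this bookkeeping really works for $p\geq2$). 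This is precisely what the paper's proof does in one stroke: in the expansion leading to \eqref{young1} the only terms of order $\chi^{1/2}$ are $p|\eta|^{p-2}\eta\cdot DV_i(x)\eta\int_s^t dW^{s,t,i}$ and $p|\eta|^{p-2}\eta\cdot B_s^{s,t,x,i}\int_s^t dW^{s,t,i}$; taking the supremum and Gronwall gives the pathwise bound, while setting $u=t$, taking expectations (so these two terms vanish by Assumption \ref{a1}(iii) and independence) and applying Gronwall gives the moment bound. Your closing paragraph acknowledges that such a refined expansion of $\E|Z|^p$ is still missing, so as written the Duhamel route does not yet yield either the $\tilde\chi$ factor or the unit constant; carrying it out amounts to reproducing the paper's chain-rule-twice argument inside your variation-of-constants framework.
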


\begin{proof}
	In the following we will omit the dependencies on $s,t$ for $B$ and $Z$. Pathwise existence and uniqueness of the solution can be again proven in a similar way as it is done in Theorem 3.7 and Theorem 3.8 of \cite{Fritz}. Now for any $s\leq u\leq t$ we have
	\begin{equation}
		\begin{aligned}
			|Z_u^x|^p&=\left|\eta\right|^p+p\sum_{i=0}^{d}\int_{s}^{u} |Z_r^x|^{p-2}Z_r^x\cdot B_r^{i,x}dW_r^{s,t,i}\\
			&\quad +p\sum_{i=0}^{d} \int_{s}^{u}|Z_l^x|^{p-2}Z_r^x\cdot DV_i(X_r^x)Z_r^x dW_r^{s,t,i}.
		\end{aligned}
	\end{equation}
	By Assumption \ref{a1}, there exists a $C>0$, such that for $i=1,...,d$
	\begin{align*}
		|\frac{\partial}{\partial r}W_r^{s,t,i}|\leq C\frac{\chi(s,t)^{1/2}}{t-s}.
	\end{align*}
	Therefore it holds
	\begin{align*}
		\sum_{i=0}^{d}\int_{s}^{u} |Z_r^x|^{p-2}Z_r^x\cdot B_r^{i,x}dW_r^{s,t,i}&=\sum_{i=0}^{d}\int_{s}^{u} |Z_r^x|^{p-2}Z_r^x\cdot (B_r^{i,x}-B_s^{i,x})dW_r^{s,t,i}\\
		&\quad +\sum_{i=0}^{d}\int_{s}^{u} |Z_r^x|^{p-2}Z_r^x\cdot B_s^{i,x}dW_r^{s,t,i}\\
		&\leq C\frac{\chi(s,t)^{1/2}}{t-s}\sum_{i=0}^{d}\int_{s}^{u} |Z_r^x|^{p-1} |B_r^x-B_s^x|dr\\
		&\quad  +\sum_{i=0}^{d}\int_{s}^{u} |Z_r^x|^{p-2}Z_r^x\cdot B_s^{i,x}dW_r^{s,t,i}.
	\end{align*}
	Now we have 
	\begin{equation}
		\begin{aligned}
		&|Z_r^x|^{p-2}Z_r^x\cdot B_s^{i,x}\\
		&=\left|\eta\right|^{p-2}\eta \cdot B_s^{i,x} + (p-2)\sum_{j=0}^{d}\int_{s}^{r}  |Z_l^x|^{p-4}\left(Z_l^x\cdot B_l^{j,x}\right) \left(Z_l^x\cdot B_s^{i,x}\right)dW^{s,t,j}_l\\
		&\quad + \sum_{j=0}^{d}\int_{s}^{r}  |Z_l^x|^{p-2}B_l^{j,x}\cdot B_s^{i,x}dW^{s,t,j}_l\\
		&\quad + (p-2)\sum_{j=0}^{d}\int_{s}^{r}  |Z_l^x|^{p-4}\left(Z_l^x\cdot DV_j(X_l^x)Z_l^x \right)\left(Z_l^x\cdot B_s^{i,x}\right)dW^{s,t,j}_l\\
		&\quad + \sum_{j=0}^{d}\int_{s}^{r}  |Z_l^x|^{p-2}DV_j(X_l^x)Z_l^x\cdot B_s^{i,x}dW^{s,t,j}_l
		\end{aligned}
	\end{equation}
	and furthermore it holds 
	\begin{equation}
		\begin{aligned}
			&|Z_r^x|^{p-2}Z_r^x\cdot DV_i(X_r^{x})Z_r^x\\
			&=\left|\eta\right|^{p-2}\eta \cdot DV_i(x)\eta + \sum_{j=0}^{d}\int_{s}^{r}  |Z_l^x|^{p-2}Z_l^x\cdot DV_i(X_l^{x})B_l^{j,x}dW^{s,t,j}_l\\
			&\quad + \sum_{j=0}^{d}\int_{s}^{r}  |Z_l^x|^{p-2}Z_l^x\cdot DV_i(X_l^{x})DV_j(X_l^{x})Z_ldW^{s,t,j}_l \\
			&\quad + \sum_{j=0}^{d}\int_{s}^{r}  |Z_l^x|^{p-2}Z_l^x\cdot D^2V_i(X_l^{x})V_j(X_l^{x})Z_l^xdW^{s,t,j}_l \\
			&\quad + (p-2)\sum_{j=0}^{d}\int_{s}^{r}  |Z_l^x|^{p-4}\left(Z_l^x\cdot B_l^{j,x}\right) \left(Z_l^x\cdot DV_i(X_l^{x})Z_l^x\right)dW^{s,t,j}_l\\
			&\quad + \sum_{j=0}^{d}\int_{s}^{r}  |Z_l^x|^{p-2}B_l^{j,x}\cdot  DV_i(X_l^{x})Z_l^{x}dW^{s,t,j}_l\\
			&\quad + (p-2)\sum_{j=0}^{d}\int_{s}^{r}  |Z_l^x|^{p-4}\left(Z_l^x\cdot DV_j(X_l^{x})Z_l^x \right)\left(Z_l^x\cdot DV_i(X_l^{x})Z_l^x\right)dW^{s,t,j}_l\\
			&\quad + \sum_{j=0}^{d}\int_{s}^{r}  |Z_l^x|^{p-2}DV_j(X_l^{x})Z_l^x \cdot  DV_i(X_l^{x})Z_ldW^{s,t,j}_l.
		\end{aligned}
	\end{equation}
	Using Assumption \ref{a1} again,  there exists a $C>0$, such that for $i=1,...,d$
	\begin{align*}
		\int_{s}^{t}d|W_r^{s,t,i}|\leq C\chi(s,t)^{1/2}.
	\end{align*}
	Using the assumption on $B$, Cauchy-Schwarz inequality and Fubini's theorem, we arrive at 
	\begin{equation}
		\begin{aligned}
			|Z_u^x|^p
			&\leq\left|\eta\right|^p+p\left|\eta\right|^{p-2}\sum_{i=1}^{d}\eta \cdot DV_i(x)\eta \int_{s}^{u}dW^{s,t,i}_l +p\left|\eta\right|^{p-2}\sum_{i=1}^{d}\eta \cdot B_s^i\int_{s}^{u}dW^{s,t,i}_l\\
			&\quad + p\int_{s}^{u} |Z_l^x|^{p-1}\left|B_l^{0,x}\right|dl +p \int_{s}^{u}|Z_l^x|^{p}\left|DV_0\right| dl+pC\frac{\chi(s,t)}{t-s}\sum_{i=0}^{d}\int_{s}^{u}  |Z_l^x|^{p-1}\left|\hat{C}^x\right|dl\\
			&\quad + p^2C^2\frac{\chi(s,t)}{t-s}\sum_{i=1}^{d}\sum_{j=1}^{d}\int_{s}^{u} |Z_l^x|^{p-1}\left|B_l^{j,x}\right| \left| DV_i\right|_\infty dl\\
			&\quad +p^2C^2\sqrt{\chi(s,t)}\sum_{i=1}^{d}\int_{s}^{u} |Z_l^x|^{p-1}\left|B_l^{0,x}\right| \left| DV_i\right|_\infty dl\\
			&\quad + p^2C^2\frac{\chi(s,t)}{t-s}\sum_{i=1}^{d}\sum_{j=1}^{d}\int_{s}^{u} |Z_l^x|^{p}\left|DV_j \right|_\infty\left| DV_i\right|_\infty dl\\
			&\quad +p^2C^2\sqrt{\chi(s,t)}\sum_{i=1}^{d}\int_{s}^{u} |Z_l^x|^{p}\left|DV_j \right|_\infty\left| DV_i\right|_\infty dl\\
			&\quad + (p^2-p)C^2\frac{\chi(s,t)}{t-s}\sum_{i=1}^{d}\sum_{j=1}^{d}\int_{s}^{u}  |Z_l^x|^{p-2}\left|B_l^{i,x}\right|\left|B_l^{j,x}\right|dl\\
			&\quad +(p^2-p)C^2\sqrt{\chi(s,t)}\sum_{i=1}^{d}\int_{s}^{u}  |Z_l^x|^{p-2}\left|B_l^{i,x}\right|\left|B_l^{0,x}\right|dl\\
			&\quad + (p^2-p)C^2\frac{\chi(s,t)}{t-s}\sum_{i=1}^{d}\sum_{j=1}^{d}\int_{s}^{t}  |Z_l^x|^{p-1} |DV_j|_\infty \left|B_l^{i,x}\right|dl\\
			&\quad +(p^2-p)C^2\sqrt{\chi(s,t)}\sum_{i=1}^{d}\int_{s}^{t}  |Z_l^x|^{p-1} |DV_j|_\infty \left|B_l^{i,x}\right|dl\\
			&\quad + pC^2\frac{\chi(s,t)}{t-s}\sum_{i=1}^{d}\sum_{j=1}^{d}\int_{s}^{t} |Z_l^x|^{p} \left|D^2V_i\right|_\infty \left|V_j\right|_\infty dl\\
			&\quad +pC^2\sqrt{\chi(s,t)}\sum_{i=1}^{d}\int_{s}^{t} |Z_l^x|^{p} \left|D^2V_i\right|_\infty \left|V_j\right|_\infty dl.
		\end{aligned}
	\end{equation}
	An application of Young inequality leads to 
	\begin{equation}\label{young1}
		\begin{aligned}
			|Z_u^x|^p
			&\leq\left|\eta\right|^p+p\left|\eta\right|^{p-2}\sum_{i=1}^{d}\eta \cdot DV_i(x)\eta \int_{s}^{u}dW^{s,t,i}_l +p\left|\eta\right|^{p-2}\sum_{i=1}^{d}\eta \cdot B_s^{i,x}\int_{s}^{u}dW^{s,t,i}_l\\
			&\quad + \tilde{C}(p,d,\left|V\right|_{\mathcal{C}^2_b})\frac{\chi(s,t)}{t-s}\left(\int_{s}^{u} |Z_r^x|^{p}dr + \int_{s}^{u}\left|B_r^x\right|^p dr +\left|\hat{C}^x\right|^p (t-s)\right)\\
			&\quad +\tilde{C}(p,d,\left|V\right|_{\mathcal{C}^2_b})\sqrt{\chi(s,t)}\left(\int_{s}^{u} |Z_r^x|^{p}dr+\int_{s}^{u}|B_r^x|^pdr\right)\\
			&\quad +\tilde{C}(p,d,\left|V\right|_{\mathcal{C}^2_b})\left(\int_{s}^{u} |Z_r^x|^{p}dr+\int_{s}^{u}|B_r^{0,x}|^pdr\right),
		\end{aligned}
	\end{equation}
	for some constant $\tilde{C}(p,d,\left|V\right|_{\mathcal{C}^2_b})>0$. Taking the expectation with the supremum over $u\in [s,t]$ and using Assumption \ref{a1} one more time, an application of Gronwall lemma shows that 
	\begin{align*}
		\sup_{s\leq u\leq t}|Z_u^x|^p
		&\leq \tilde{C}(p,d,\left|V\right|_{\mathcal{C}^2_b})\left(|\eta|^p+|B^x_s|^p+|\hat{C}^x|^p\right)e^{\tilde{C}(p,d,\left|V\right|_{\mathcal{C}^2_b})\tilde{\chi}(s,t)}.
	\end{align*}
	Fixing $u=t$ in (\ref{young1}) we can use that $\E\left[\int_{s}^{t}dW^{s,t,i}\right]=0$, then taking the expectation, a simple application of Gronwall lemma leads to 
	\begin{equation}
		\begin{aligned}
			\E\left[|Z_t^x|^p\right]&\leq\|\eta\|_{L^p}^pe^{\tilde{C}(p,d,\left|V\right|_{\mathcal{C}^2_b})\tilde{\chi}(s,t)}\\
			&\quad +\tilde{C}(p,d,\left|V\right|_{C^2_b})\left(\|B^{x}_s\|_{L^p}^p+\|\hat{C}^{x}\|_{L^p}^p\right)\tilde{\chi}(s,t)e^{\tilde{C}(p,d,\left|V\right|_{\mathcal{C}^2_b})\tilde{\chi}(s,t)}.
		\end{aligned}
	\end{equation}
\end{proof}

\begin{lemma}\label{directional}
	For any $(s,t)\in \Delta_T$, the map
	\begin{align*}
		\phi^{s,t}:\mathbb{R}^d\rightarrow \mathcal{C}^{1-\text{var}}([s,t],\mathbb{R}^d), \quad 
		x\mapsto X^{s,t,x}
	\end{align*}
	has directional derivatives up to an arbitrary order, i.e. for all $k\in \mathbb{N}$ and $(\eta_i)_{1\leq i\leq k}\subseteq \mathbb{R}^d$
	\begin{align*}
		D^k_{\eta_1,...,\eta_k}\phi^{s,t}(x):=\left(\frac{\partial^k}{\partial \epsilon_1...\partial \epsilon_k}X^{s,t,x+\sum_{j=1}^{k}\epsilon_j \eta_j}\right)|_{\epsilon=0}
	\end{align*}
	exists as a strong limit in the Banach space $\mathcal{C}^{1-\text{var}}([s,t],\mathbb{R}^d)$. If we denote 
	\begin{align*}
		Z_r^{s,t,x,\eta_1,...,\eta_k}:=D^k_{\eta_1,...,\eta_k}\phi^{s,t}(x)_r,
	\end{align*}
	then it holds for $k=1$ and any $p\geq 1$
	\begin{equation}\label{directional1}
		\begin{aligned}
			\sup_{|\eta_1|\leq 1}\sup_{x\in \mathbb{R}^d}\sup_{s\leq r\leq 	t}|Z_r^{s,t,x,\eta_1}|^p&\leq C(p,d,|V|_{\mathcal{C}_b^2})e^{C(p,d,|V|_{\mathcal{C}_b^2})\tilde{\chi}(s,t)},\\
			\sup_{|\eta_1|\leq 1}\E\left[|Z_t^{s,t,x,\eta_1}|^p\right]&\leq 	e^{C(p,d,|V|_{\mathcal{C}_b^2})\tilde{\chi}(s,t)},
		\end{aligned}
	\end{equation}
	for some $C(p,d,|V|_{\mathcal{C}_b^2})>0$. For $k>1$ and any $p\geq 1$ it holds
	\begin{equation}\label{directional2}
		\begin{aligned}
		\sup_{|\eta_1|\leq 1,...,|\eta_k|\leq 1}\sup_{x\in \mathbb{R}^d}\sup_{s\leq r\leq t}|Z_r^{s,t,x,\eta_1,...,\eta_k}|^p&\leq C(p,k,d,|V|_{\mathcal{C}_b^k})e^{C(p,k,d,|V|_{\mathcal{C}_b^k})\tilde{\chi}(s,t)},\\
		\sup_{|\eta_1|\leq 1,...,|\eta_k|\leq 1}\E\left[|Z_t^{s,t,x,\eta_1,...,\eta_k}|^p\right]&\leq C(p,k,d,|V|_{\mathcal{C}_b^k})\tilde{\chi}(s,t)e^{C(p,k,d,|V|_{\mathcal{C}_b^k})\tilde{\chi}(s,t)},
		\end{aligned}
	\end{equation}
	for some $C(p,k,d,|V|_{\mathcal{C}_b^k})>0$. 
\end{lemma}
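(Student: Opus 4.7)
My plan is to proceed by induction on $k$, at each step reducing matters to an application of \cref{VarEst}. For the base case $k=1$, formal differentiation of the state equation \eqref{state} with respect to $x$ in direction $\eta_1$ singles out as candidate derivative the solution of the linear variational equation
\begin{align*}
dZ^{(1)}_r = \sum_{i=0}^{d} DV_i(X_r^{s,t,x}) Z^{(1)}_r \, dW_r^{s,t,i}, \qquad Z^{(1)}_s = \eta_1.
\end{align*}
Convergence of the finite difference quotient $\epsilon^{-1}(X^{s,t,x+\epsilon\eta_1} - X^{s,t,x})$ to $Z^{(1)}$ in $\mathcal{C}^{1\text{-var}}([s,t],\mathbb{R}^d)$ as $\epsilon\to 0$ follows from a Gronwall argument exploiting the smoothness of the $V_i$ and the bounded-variation structure of $W^{s,t}$. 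This equation fits the framework of \cref{VarEst} with inhomogeneous term $B\equiv 0$ (so $\hat{C}^{s,t,x}=0$), and both assertions in \eqref{directional1} follow immediately by taking $|\eta_1|\leq 1$.

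For the inductive step, suppose the lemma holds for every order strictly smaller than $k$. Formally differentiating the variational equation satisfied by $Z^{s,t,x,\eta_1,\dots,\eta_{k-1}}$ in $x$ in direction $\eta_k$ (existence of the derivative being established, as in the base case, by convergence of difference quotients) reveals via Fa\`a di Bruno's formula that $Z^{s,t,x,\eta_1,\dots,\eta_k}$ satisfies \eqref{variational} with $\eta=0$ and inhomogeneous term
\begin{align*}
B_r^{s,t,x,i} = \sum_{\mathcal{I}} D^{|\mathcal{I}|}V_i(X_r^{s,t,x})\bigl[Z_r^{s,t,x,\eta_{I_1}},\dots,Z_r^{s,t,x,\eta_{I_{|\mathcal{I}|}}}\bigr],
\end{align*}
where the sum runs over partitions $\mathcal{I}=\{I_1,\dots,I_{|\mathcal{I}|}\}$ of $\{1,\dots,k\}$ with $|\mathcal{I}|\geq 2$ (the trivial partition is absorbed into the linear-in-$Z^{(k)}$ term on the other side of \eqref{variational}).

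Checking the hypotheses of \cref{VarEst} is then bookkeeping. Since $X_s=x$ and $Z_s^{s,t,x,\eta_j}=\eta_j$, the value $B_s^{s,t,x,i}$ is a deterministic polynomial in $\eta_1,\dots,\eta_k$ and derivatives of $V_i$ at $x$, and hence is automatically independent of $\eta$ and $W^{s,t,i}$. For the $\chi^{1/2}$-variation bound, a triangle inequality after inserting and subtracting $B_s$ reduces matters to controlling $|X_r-x|$ and each $|Z_r^{s,t,x,\eta_{I_\ell}}-\eta_{I_\ell}|$: the former is bounded by $\chi(s,t)^{1/2}$ times a random constant via \cref{a1}(i) and boundedness of the $V_i$, while the latter is handled by integrating the corresponding variational equation and invoking the induction hypothesis together with \cref{boundSol}. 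The same ingredients furnish $L^p$ control of $B_s^{s,t,x}$ and $\hat{C}^{s,t,x}$ of the form $C(p,k,d,|V|_{\mathcal{C}_b^k})\, e^{C\tilde\chi(s,t)}$. Plugging all of this into \cref{VarEst} — with the crucial observation that $\eta=0$ kills the leading $\|\eta\|_{L^p}$ term and leaves visible the extra factor $\tilde\chi(s,t)$ claimed in \eqref{directional2} — closes the induction.

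The step I expect to require the most care is the combinatorial identification of $B_r^{s,t,x,i}$ at arbitrary order and the verification that, uniformly over $\eta_1,\dots,\eta_k$ in the unit ball, this quantity simultaneously satisfies all the hypotheses of \cref{VarEst} with the quantitative $L^p$ moment bounds needed to close the induction. Once that is written out, no additional analytic idea is required and the estimates follow mechanically from the results already established.
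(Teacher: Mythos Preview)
Your proposal is correct and follows essentially the same route as the paper: induction on the order $k$, identification of the variational ODE for the $k$-th directional derivative by formal differentiation, and reduction at each step to \cref{VarEst} with $\eta=0$ for $k\ge 2$ so that the extra factor $\tilde\chi(s,t)$ in \eqref{directional2} comes from the inhomogeneous contribution. The only cosmetic differences are that the paper cites \cite{Fritz}, Proposition~4.6 for the existence of the directional derivatives rather than sketching a difference-quotient argument, treats $k=2$ explicitly before the general inductive step, and obtains the $\chi^{1/2}$-increment bound on $B$ by writing $B_r-B_s$ as an integral via the product rule (your telescoping argument is equivalent). Your flagging of the combinatorics of $B$ at arbitrary order as the point requiring most care is apt; the paper handles this in the same somewhat informal way, simply observing that every term has the shape $F(X^{s,t,x})Z^{s,t,x,\eta_{\alpha_1}}\cdots Z^{s,t,x,\eta_{\alpha_l}}$ with all $|\alpha_i|\le k-1$.
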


\begin{proof}
	Since $V$ is a collection of smooth vector fields, the existence and continuity of the directional derivatives of the solution map $\phi :x\mapsto X^{s,t,x}$ follows from (\cite{Fritz}, Proposition 4.6). By formal differentiation one can derive the control ODEs satisfied by the directional derivatives.
	For $k=1$, the directional derivative $Z^{s,t,\eta_1}$ satisfies the equation 
	\begin{equation}
		\begin{aligned}
			dZ^{s,t,x,\eta_j}_r&=\sum_{i=0}^d DV_i(X_r^{s,t,x})Z_r^{s,t,x,\eta_j}dW_r^{s,t,i}\\
			Z^{s,t,x,\eta_j}_s&=\eta_1.
		\end{aligned}
	\end{equation}
	This is an equation of type (\ref{variational}) with $B^{s,t,x}=0$ and therefore (\ref{directional1}) follows from Lemma \ref{VarEst}. \\\\For arbitrary $k>1$, the bound in (\ref{directional2}) will be proven by induction.\\\\
	For $k=2$, the directional derivative $Z^{s,t,\eta_1,\eta_2}$ satisfies the equation 
	\begin{equation}
		\begin{aligned}
			dZ^{s,t,x,\eta_1,\eta_2}_r&=\sum_{i=0}^d D^2V_i(X_r^{s,t,x})Z_r^{s,t,x,\eta_1}Z_r^{s,t,x,\eta_2}dW_r^{s,t,i}\\
			&\quad + \sum_{i=0}^d DV_i(X_r^{s,t,x})Z_r^{s,t,x,\eta_1,\eta_2}dW_r^{s,t,i}\\
			Z^{s,t,x,\eta_1,\eta_2}_s&=0.
		\end{aligned}
	\end{equation}
	If we define
	\begin{align*}
		B^{s,t,x,i}_r:=D^2V_i(X_r^{s,t,x})Z_r^{s,t,x,\eta_1}Z_r^{s,t,x,\eta_2},
	\end{align*}
	then it holds
	\begin{equation}\label{intParts}
		\begin{aligned}
		&D^2V_i(X_r^{s,t,x})Z_r^{s,t,x,\eta_1}Z_r^{s,t,x,\eta_2}=D^2V_i(x)\eta_1\eta_2\\
		&\quad + \sum_{j=0}^{d}\int_{s}^{r}D^2V_i(X_l^{s,t,x})Z_l^{s,t,x,\eta_1}DV_j(X_l^{s,t,x})Z_l^{s,t,x,\eta_2}\dfrac{\partial}{\partial l}W_l^{s,t,j}dl\\
		&\quad + \sum_{j=0}^{d}\int_{s}^{r} D^2V_i(X_l^{s,t,x})DV_j(X_l^{s,t,x})Z_l^{s,t,x,\eta_1}Z_l^{s,t,x,\eta_2}\dfrac{\partial}{\partial l}W_l^{s,t,j}dl\\
		&\quad + \sum_{j=0}^{d}\int_{s}^{r} D^3V_i(X_l^{s,t,x})Z_l^{s,t,x,\eta_1}V_j(X_l^{s,t,x})Z_l^{s,t,x,\eta_2}\dfrac{\partial}{\partial l}W_l^{s,t,j}dl.
		\end{aligned}
	\end{equation}
	Therefore, using Assumption \ref{a1}, we get 
	\begin{align*}
		|B^{s,t,x,i}_r-B^{s,t,x,i}_s|&\leq \hat{C}^{s,t,x}\chi(s,t)^{1/2},
	\end{align*}
	where 
	\begin{align*}
		\hat{C}^{s,t,x}:=C|V|_{\mathcal{C}_b^3}\sup_{s\leq l\leq t}|Z^{s,t,x,\eta_1}_l|\sup_{s\leq l\leq t}|Z^{s,t,x,\eta_2}_l|.
	\end{align*}
	Now (\ref{directional1}) and Lemma \ref{VarEst} concludes (\ref{directional2}).\\\\
	Now assume that (\ref{directional2}) is true for arbitrary, but fixed $k>1$. \\\\The directional derivative $Z^{s,t,x,\eta_1,...,\eta_{k+1}}$ of order $k+1$ satisfies an equation of type (\ref{variational}), where $B^{s,t,x}$ only involves terms of the form
	\begin{equation}\label{coeffTerms}
		F(X^{s,t,x})Z^{s,t,x,\eta_{\alpha_1}}...Z^{s,t,x,\eta_{\alpha_l}},
	\end{equation}
	for some bounded function $F:\mathbb{R}^d\rightarrow L((\mathbb{R}^d)^l,\mathbb{R}^d)$ and multi-indices $\alpha_i\in \mathcal{A}^1_k$, where for $\alpha=(i_1,...,i_l)$ we define $Z^{s,t,x,\eta_{\alpha}}=Z^{s,t,x,\eta_1...\eta_l}$. Terms of the form (\ref{coeffTerms}) only involve directional derivatives of order strictly less than $k+1$. Therefore we can conclude that 
	\begin{align*}
		\sup_{x\in \mathbb{R}^d}\sup_{s\leq r\leq 	t}|F(X^{s,t,x}_r)Z^{s,t,x,\eta_{\alpha_1}}_r...Z^{s,t,x,\eta_{\alpha_l}}_r|^p \leq C(p,k,d,|V|_{\mathcal{C}_b^k})e^{C(p,k,d,|V|_{\mathcal{C}_b^k})\tilde{\chi}(s,t)}
	\end{align*}
	and
	\begin{align*}
		\E\left[|F(X^{s,t,x}_t)Z^{s,t,x,\eta_{\alpha_1}}_t...Z^{s,t,x,\eta_{\alpha_l}}_t|^p\right] \leq C(p,k,d,|V|_{\mathcal{C}_b^k})\tilde{\chi}(s,t)e^{C(p,k,d,|V|_{\mathcal{C}_b^k})\tilde{\chi}(s,t)}.
	\end{align*}
	Terms which involve at least one directional derivative of order two or higher satisfy
	\begin{equation}
		F(X_s^{s,t,x})Z^{s,t,x,\eta_{\alpha_1}}_s...Z^{s,t,x,\eta_{\alpha_l}}_s=0.
	\end{equation}
	For terms of the form (\ref{coeffTerms}) that only involve directional derivatives of order one, we can argue in a similar way as in the case $k=2$ to show that 
	\begin{align*}
		\E\left[|F(X^{s,t,x}_t)Z^{s,t,x,\eta_1}_t...Z^{s,t,x,\eta_{k+1}}_t-F(x)\eta_1...\eta_{k+1}|^p\right]\leq C(p,k,d,|V|_{\mathcal{C}_b^k})\tilde{\chi}(s,t)e^{C(p,k,d,|V|_{\mathcal{C}_b^k})\tilde{\chi}(s,t)}. 
	\end{align*}
	All together, Lemma \ref{VarEst} concludes (\ref{directional2}) for directional derivatives of order $k+1$. This finishes the proof.
\end{proof}

We will now show that $Q_{s,t}$ is for each \( 0\le s\le t\le T\) a well-defined element of the operator algebra $L(\hat{C}(\mathbb{R}^d,\mathbb{R}))\cap L(\mathcal{C}^{m+1}_b(\mathbb{R}^d,\mathbb{R}))$. We also provide some stability estimates, in order to find some moderate growth rate for $|Q_{s,t}^\pi|$, which is independent of the partition $\pi$.

\begin{lemma}\label{stability}
	We fix $(s,t)\in \Delta_T$ and $K\in \mathbb{N}_0$. For any  $f\in \mathcal{C}^K_b(\mathbb{R}^d,\mathbb{R})$,  the function $Q_{s,t}f$ is $K$- times continuous differentiable with
	\begin{equation}
		\begin{aligned}
			\sup_{l=0,...,K}|D^l\left(Q_{s,t}f\right)|_\infty&\leq \sup_{l=0,...,K}| D^lf|_\infty e^{\tilde{C}(K,d,\left|V\right|_{C^K_b})\tilde{\chi}(s,t)},
		\end{aligned}
	\end{equation}
	for some $\tilde{C}(K,d,\left|V\right|_{\mathcal{C}^K_b})>0$. 
\end{lemma}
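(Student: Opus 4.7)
The strategy is to differentiate $Q_{s,t}f(x) = \mathbb{E}[f(X_t^{s,t,x})]$ under the expectation and then invoke the multivariate Faà di Bruno formula, controlling each resulting term with the directional-derivative bounds of Lemma \ref{directional}.

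The case $l = 0$ is immediate from $|Q_{s,t}f|_\infty \leq |f|_\infty \leq |f|_\infty\, e^{\tilde C \tilde\chi(s,t)}$. For $1 \leq l \leq K$, the pathwise estimates of Lemma \ref{directional} on $Z^{s,t,x,\eta_1,\ldots,\eta_k}$ for each $k \leq K$, combined with $f \in \mathcal{C}^K_b$, furnish dominants that are integrable uniformly in $x$ on compact sets; this justifies differentiation under the expectation by dominated convergence and yields
$$D^l_{\eta_1,\ldots,\eta_l} Q_{s,t}f(x) = \mathbb{E}\bigl[D^l_{\eta_1,\ldots,\eta_l}(f\circ \phi^{s,t})(x)\bigr].$$
Faà di Bruno's formula expands the integrand into a finite sum indexed by set partitions $\pi$ of $\{1,\ldots,l\}$,
$$D^l_{\eta_1,\ldots,\eta_l}(f\circ \phi^{s,t})(x) = \sum_\pi D^{|\pi|}f(X_t^{s,t,x}) \cdot \prod_{B \in \pi} Z_t^{s,t,x,\eta_B},$$
where $\eta_B := (\eta_i)_{i \in B}$.

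To control each term I would split the sum into two types. For the singleton partition $\pi_0 = \{\{1\},\ldots,\{l\}\}$, Hölder's inequality together with the second expectation bound in (\ref{directional1}) gives
$$\bigl|\mathbb{E}[D^l f(X_t^{s,t,x})\, Z_t^{s,t,x,\eta_1}\cdots Z_t^{s,t,x,\eta_l}]\bigr| \leq |D^l f|_\infty \prod_{i=1}^l \mathbb{E}\bigl[|Z_t^{s,t,x,\eta_i}|^l\bigr]^{1/l} \leq |D^l f|_\infty\, e^{C\tilde\chi(s,t)}.$$
For every other partition $\pi$, at least one block has size $\geq 2$; combining the pathwise bound (\ref{directional1}) for the first-order $Z$'s with the second expectation bound of (\ref{directional2}) for the higher-order ones (again via Hölder) produces a term bounded by $\sup_{j \leq K}|D^j f|_\infty \cdot C\tilde\chi(s,t)\, e^{C\tilde\chi(s,t)}$, which vanishes at $s = t$. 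Summing over the finitely many partitions and absorbing the combinatorial constants via the elementary inequalities $C\tilde\chi\, e^{C\tilde\chi} \leq e^{2C\tilde\chi} - 1$ and $1 + x \leq e^x$ yields the claimed exponential bound with some $\tilde C = \tilde C(K, d, |V|_{\mathcal{C}^K_b})$.

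Continuity of each $D^l(Q_{s,t}f)$ in $x$ follows from the continuity of $x \mapsto Z_t^{s,t,x,\eta_1,\ldots,\eta_k}$ (smoothness of the flow, ensured by $V_i \in \mathcal{C}^\infty_b$) together with dominated convergence using the same pathwise dominants. The principal technical step is the exchange of $D^l$ and $\mathbb{E}$, which is routine once the uniform-in-$(x,\omega)$ bounds of Lemma \ref{directional} are in hand; the remainder is combinatorial bookkeeping.
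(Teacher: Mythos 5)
Your proposal is correct and follows essentially the same route as the paper: differentiation under the expectation justified by the bounds of Lemma \ref{directional}, a Fa\`a di Bruno expansion (the paper's Stirling numbers $S(k,l)$ encode exactly your set-partition sum), the split into the all-singleton term bounded by $e^{C\tilde\chi}$ versus terms with a block of size at least two carrying the factor $\tilde\chi(s,t)$, and absorption of the combinatorial constants into the exponential. Your explicit absorption inequalities and the remark on continuity are only slightly more detailed than the paper's presentation.
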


\begin{remark}
	This shows in particular, that for any $(s,t)\in \Delta_T$ and $K\in \mathbb{N}$, the operator $Q_{s,t}$ is an element of $ L(\mathcal{C}^K_b(\mathbb{R}^d,\mathbb{R}))$.
\end{remark}

\begin{proof}
	For any $f\in \mathcal{C}_b^0(\mathbb{R}^d,\mathbb{R})$ it holds
	\begin{align*}
		\sup_{x\in \mathbb{R}^d}|Q_{s,t}f(x)|\leq \sup_{x\in \mathbb{R}^d}|f(x)|
	\end{align*}
	and due to the continuity of the solution map $\phi^{s,t}:x\mapsto X^{s,t,x}$ we obtain $Q_{s,t}f\in \mathcal{C}_b^0(\mathbb{R}^d,\mathbb{R})$.\\\\
	Let us now fix some $0<k\leq K$ and $f\in \mathcal{C}_b^K(\mathbb{R}^d,\mathbb{R})$. With the same notation as in Lemma \ref{directional}, we denote by $Z^{s,t,x,\eta_1,...,\eta_k}$ the directional derivative $\partial_{\eta_1}...\partial_{\eta_k} \phi^{s,t}$ of $\phi^{s,t}$ in direction $\eta_1,...,\eta_k$ and for $\alpha=(i_1,...,i_k)\in \mathcal{A}^1$ we define $Z^{s,t,x,\eta_\alpha}:=Z^{s,t,x,\eta_{i_1},...,\eta_{i_k}}$, for given directions $\eta=(\eta_i)_{i\in \mathbb{N}}$. \\\\
	Since $f$ is $K$-times continuous differentiable with bounded derivatives up to order $K$, we can conclude with Lemma \ref{directional} that differentiation under the expectation is justified and therefore the desired differentiability of $Q_{s,t}f$ follows.\\\\ 
	\\
	Now Cauchy-Schwarz inequality leads to
	\begin{equation}\label{stability_derivative}
		\begin{aligned}
			&|D^k\left(Q_{s,t}f\right)|_\infty\\
			&=\sup_{x\in \mathbb{R}^d}\sup_{\eta:|\eta_1|\leq 1,...,|\eta_k|\leq1}\left|D^k\left(Q_{s,t}f\right)(x)\eta_1...\eta_k\right|\\
			&\leq \sup_{x\in \mathbb{R}^d}\sum_{l=1}^{k}S(k,l)\sup_{\eta:|\eta_1|\leq 1,...,|\eta_k|\leq1}\sup_{ \underset{|\alpha^1|+...+|\alpha^l|=k}{\alpha^1,...,\alpha^l\in\mathcal{A}^1(k-l+1)}}^{}\left|\E\left[D^lf(X_t^{s,t,x})Z_t^{s,t,x,\eta_{\alpha^1}}...Z_t^{s,t,x,\eta_{\alpha^l}}\right]\right|\\
			&\leq \sup_{x\in \mathbb{R}^d}\sum_{l=1}^{k}S(k,l)\sup_{\eta:|\eta_1|\leq 1,...,|\eta_k|\leq1}\sup_{ \underset{|\alpha^1|+...+|\alpha^l|=k}{\alpha^1,...,\alpha^l\in\mathcal{A}^1(k-l+1)}}^{}\E\left[\left|D^lf(X_t^{s,t,x})Z_t^{s,t,x,\eta_{\alpha^1}}...Z_t^{s,t,x,\eta_{\alpha^l}}\right|\right]\\
			&\leq \sup_{l=1,...,K}\left|D^lf\right|_\infty\sup_{x\in \mathbb{R}^d}\sum_{l=1}^{k-1}S(k,l)\sup_{\eta:|\eta_1|\leq 1,...,|\eta_k|\leq1}\sup_{ \underset{|\alpha^1|+...+|\alpha^l|=k}{\alpha^1,...,\alpha^l\in\mathcal{A}^1(k-l+1)}}^{}\E\left[\left|Z_t^{s,t,x,\eta_{\alpha^1}}\right|...\left|Z_t^{s,t,x,\eta_{\alpha^l}}\right|\right]\\
			&\quad + \sup_{l=1,...,K}\left|D^lf\right|_\infty\sup_{x\in \mathbb{R}^d}\sup_{\eta:|\eta_1|\leq 1,...,|\eta_k|\leq1}\sup_{ i_1,...,i_k\in \{1,...,d\}}^{}\E\left[\left|Z_t^{s,t,x,{i_1}}\right|...\left|Z_t^{s,t,x,{i_k}}\right|\right].
		\end{aligned}
	\end{equation}
	where $S(k,l)$ is a Stirling number of the second kind. For $l<k$ let $\alpha^1,...,\alpha^l\in \mathcal{A}^1(k-l+1)$ with $|\alpha^1|+...+|\alpha^l|=k$ and w.l.o.g. $|\alpha^1|=...=|\alpha^r|=1$, $|\alpha^{j}|>1$, for $j=r+1,...,l$ and some $r<l$. By Lemma \ref{directional} we then have
	\begin{align*}
		&\sup_{\eta:|\eta_1|\leq 1,...,|\eta_k|\leq1}\E\left[\left|Z_t^{s,t,x,\eta_{\alpha^1}}\right|...\left|Z_t^{s,t,x,\eta_{\alpha^l}}\right|\right]\\
		&\leq C(d,|V|_{\mathcal{C}_b^2})\sup_{\eta:|\eta_1|\leq 1,...,|\eta_k|\leq1}\E\left[\left|Z_t^{s,t,x,\eta_{\alpha^{r+1}}}\right|...\left|Z_t^{s,t,x,\eta_{\alpha^l}}\right|\right] e^{C(d,|V|_{\mathcal{C}_b^2})\tilde{\chi}(s,t)},
	\end{align*}
	for some $C(d,|V|_{\mathcal{C}_b^2})>0$. Using Cauchy-Schwarz inequality, Young inequality and Lemma \ref{directional} again, we obtain 
	\begin{align*}
		\sup_{\eta:|\eta_1|\leq 1,...,|\eta_k|\leq1}\E\left[\left|Z_t^{s,t,x,\eta_{\alpha^1}}\right|...\left|Z_t^{s,t,x,\eta_{\alpha^l}}\right|\right]
		&\leq C(k,d,|V|_{\mathcal{C}_b^k})\tilde{\chi}(s,t)e^{C(k,d,|V|_{\mathcal{C}_b^k})\tilde{\chi}(s,t)},
	\end{align*} 
	for some $C(k,d,|V|_{\mathcal{C}_b^k})>0$. Furthermore we have 
	\begin{align*}
		\sup_{x\in \mathbb{R}^d}\sup_{\eta:|\eta_1|\leq 1,...,|\eta_k|\leq1}\sup_{ i_1,...,i_k\in \{1,...,d\}}^{}\E\left[\left|Z_t^{s,t,x,{i_1}}\right|...\left|Z_t^{s,t,x,{i_k}}\right|\right]\leq e^{C(d,|V|_{\mathcal{C}_b^2})\tilde{\chi}(s,t)}, 
	\end{align*}
	for some $C(d,|V|_{\mathcal{C}_b^2})>0$. Together we can conclude
	\begin{equation}
		\begin{aligned}
			\sup_{l=0,...,K}|D^lQ_{s,t}f|&\leq \sup_{l=0,...,K}|D^lf|e^{C(K,d,|V|_{\mathcal{C}_b^K})\tilde{\chi}(s,t)},
		\end{aligned}
	\end{equation}
for some $C(K,d,|V|_{\mathcal{C}_b^K})>0$.
\end{proof}

We are now able to derive an exponential growth rate for $Q^\pi_{s,t}$ which is independent of the partition $\pi$.

\begin{lemma}\label{growth}
	Let $(s,t)\in \Delta_2$ and $\pi=\{s=t_0<...<t_k=t\}$ be an arbitrary partition of $[s,t]$. Furthermore let $K\in \mathbb{N}$ and
	\[
	\begin{aligned}
		Q_{s,t}^\pi f=\prod_{i=0}^{k-1}Q_{t_i,t_{i+1}}f,
	\end{aligned}
	\]
	then
	\begin{align*}
		\max_{l=0,...,K}\left(\sup_{|f|_{\mathcal{C}_b^l(\mathbb{R}^d,\mathbb{R})}\leq 1}|Q_{s,t}^\pi f|_{\mathcal{C}^l_b(\mathbb{R}^d,\mathbb{R})}\right)\leq e^{\tilde{C}(K,d,\left|V\right|_{\mathcal{C}^K_b})\tilde{\chi}(s,t)},
	\end{align*}
	for some $\tilde{C}(K,d,\left|V\right|_{\mathcal{C}^K_b})>0$ independent of $s,t$.
\end{lemma}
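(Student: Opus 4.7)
The plan is to apply Lemma \ref{stability} iteratively across the sub-intervals of the partition and then control the resulting sum of exponents using the super-additivity of the control $\chi$ together with a trivial bound for the cross term in $\tilde\chi$.

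Concretely, I would set $f_k := f$ and recursively define $f_{i-1} := Q_{t_{i-1},t_i} f_i$ for $i = k, k-1, \dots, 1$, so that $Q^\pi_{s,t} f = f_0$. Since $f \in \mathcal{C}_b^K(\mathbb{R}^d,\mathbb{R})$, Lemma \ref{stability} first gives $f_{k-1} \in \mathcal{C}_b^K(\mathbb{R}^d,\mathbb{R})$, and then by induction all the $f_i$ lie in $\mathcal{C}_b^K(\mathbb{R}^d,\mathbb{R})$. Applying Lemma \ref{stability} at each step with the same constant $\tilde{C} = \tilde{C}(K,d,|V|_{\mathcal{C}_b^K})$ yields
\[
|f_{i-1}|_{\mathcal{C}_b^K} \;\le\; |f_i|_{\mathcal{C}_b^K}\, e^{\tilde{C}\,\tilde{\chi}(t_{i-1},t_i)}, \qquad i = 1,\dots,k,
\]
where $|g|_{\mathcal{C}_b^K} := \sup_{l=0,\dots,K} |D^l g|_\infty$. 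Iterating this estimate gives
\[
|Q^\pi_{s,t} f|_{\mathcal{C}_b^K} \;\le\; |f|_{\mathcal{C}_b^K}\,\exp\!\Bigl(\,\tilde{C}\sum_{i=0}^{k-1} \tilde{\chi}(t_i,t_{i+1})\Bigr).
\]

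The only point that requires a small argument is the super-additive-type bound $\sum_{i=0}^{k-1}\tilde{\chi}(t_i,t_{i+1}) \le \tilde\chi(s,t)$. Here I would split $\tilde\chi$ into its three pieces. Super-additivity of the control $\chi$, iterated over the partition, gives $\sum_i \chi(t_i,t_{i+1}) \le \chi(s,t)$. The Lebesgue term is trivial: $\sum_i (t_{i+1}-t_i) = t-s$. For the cross term I would use the monotonicity $\chi(t_i,t_{i+1}) \le \chi(s,t)$ on each sub-interval to obtain
\[
\sum_{i=0}^{k-1}(t_{i+1}-t_i)\sqrt{\chi(t_i,t_{i+1})} \;\le\; \sqrt{\chi(s,t)}\,\sum_{i=0}^{k-1}(t_{i+1}-t_i) \;=\; (t-s)\sqrt{\chi(s,t)}.
\]
Adding the three contributions reproduces exactly $\tilde\chi(s,t)$, and inserting this bound into the previous display yields the claim.

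There is no serious obstacle: the work is done by Lemma \ref{stability}, and the mildly delicate observation is simply that although $\tilde\chi$ itself is not a control (because of the cross term), it is dominated in the partition-sum sense by $\tilde\chi(s,t)$, which is what allows the exponential constant to stay independent of $\pi$.
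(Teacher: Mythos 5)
Your proposal is correct and takes essentially the same route as the paper: the paper also iterates Lemma \ref{stability} over the partition (peeling off the first sub-interval at each step) and uses, implicitly, exactly the bound $\sum_{i}\tilde{\chi}(t_i,t_{i+1})\leq \tilde{\chi}(s,t)$ that you verify explicitly. Only your parenthetical aside is slightly off: the cross term $(t-s)\sqrt{\chi(s,t)}$ is in fact super-additive by the very monotonicity argument you give, so $\tilde{\chi}$ is itself a control --- but this does not affect your argument.
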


\begin{proof}
	Let $0\leq m\leq K$. By Lemma \ref{stability} we get inductively
	\begin{equation}
		\begin{aligned}
			|Q_{s,t}^\pi f|_{\mathcal{C}^m_b(\mathbb{R}^d,\mathbb{R})}&= \sup_{r=0,...,m}|D^rQ_{s,t}^\pi f|_\infty\\
			&= \sup_{r=0,...,m}|D^rQ_{s,t_{1}}Q_{t_1,t}^{\pi\setminus \{s,t_1\}} f|_\infty\\
			&\leq \sup_{r=0,...,m}|D^rQ_{t_{1},t}^{\pi\setminus \{s,t_1\}} f|_\infty e^{\tilde{C}(m,d,\left|V\right|_{\mathcal{C}^m_b})\tilde{\chi}(s,t_1)}\\
			&\leq |f|_{\mathcal{C}^m_b(\mathbb{R}^d,\mathbb{R})}e^{\tilde{C}(m,d,\left|V\right|_{\mathcal{C}^m_b})\left(\tilde{\chi}(t_{1},t)+\tilde{\chi}(s,t_{1})\right)}\\
			&\leq |f|_{\mathcal{C}^m_b(\mathbb{R}^d,\mathbb{R})} e^{\tilde{C}(m,d,\left|V\right|_{\mathcal{C}^m_b})\tilde{\chi}(s,t)}.
		\end{aligned}
	\end{equation}
\end{proof}

We end this section with two small results concerning the strong continuity and contraction property of $(Q_{s,t})_{(s,t)\in \Delta_T}$.

\begin{lemma}\label{vanish}
	For any $(s,t)\in \Delta_T$ and any $f\in \hat{C}(\mathbb{R}^d,\mathbb{R})$, it holds $Q_{s,t}f\in \hat{C}(\mathbb{R}^d,\mathbb{R})$. Furthermore it holds 
	\begin{align*}
		|Q_{s,t}f|_\infty \leq |f|_\infty.
	\end{align*}
\end{lemma}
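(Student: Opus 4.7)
The plan is to treat the two assertions separately, with the contraction property being immediate and the preservation of $\hat{C}(\mathbb{R}^d,\mathbb{R})$ requiring both a continuity argument and a decay-at-infinity argument.

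First, I would dispatch the contraction bound: since $Q_{s,t}f(x) = \mathbb{E}[f(X_t^{s,t,x})]$ and $f$ is bounded, Jensen's inequality gives
\[
|Q_{s,t}f(x)| \le \mathbb{E}[|f(X_t^{s,t,x})|] \le |f|_\infty
\]
uniformly in $x$, so taking the supremum yields $|Q_{s,t}f|_\infty \le |f|_\infty$.

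Next, I would verify that $Q_{s,t}f$ is continuous when $f \in \hat{C}(\mathbb{R}^d,\mathbb{R})$. For each fixed $\omega$, the solution map $x \mapsto X_t^{s,t,x}(\omega)$ is continuous (indeed smooth, by \cref{directional}), so continuity of $f$ gives continuity pathwise. Since $|f(X_t^{s,t,x})| \le |f|_\infty$, dominated convergence transfers this to continuity of the expectation $x \mapsto Q_{s,t}f(x)$.

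The main step, and the one where the structure of the problem actually matters, is showing that $Q_{s,t}f$ vanishes at infinity. Here the key observation is that the pathwise estimate of \cref{boundSol},
\[
\sup_{u\in [s,t]}|X_u^{s,t,x}-x|^p \le \tilde{C}(p,d,|V|_{\mathcal{C}^1_b})\tilde{\chi}(s,t)\,e^{\tilde{C}(p,d,|V|_{\mathcal{C}^1_b})\tilde{\chi}(s,t)},
\]
is uniform in $x \in \mathbb{R}^d$; denote the right-hand side (for $p=1$, say) by $R(s,t)^p$, so that $|X_t^{s,t,x}-x| \le R(s,t)$ almost surely, independently of $x$. Given $\varepsilon>0$, pick a compact $K_\varepsilon \subset \mathbb{R}^d$ such that $|f(y)| < \varepsilon$ for $y \notin K_\varepsilon$, and let $M$ be chosen so that $K_\varepsilon \subset \bar{B}_M(0)$. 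Then for $|x| > M + R(s,t)$ and almost every $\omega$, one has $|X_t^{s,t,x}(\omega)| \ge |x| - R(s,t) > M$, hence $|f(X_t^{s,t,x}(\omega))| < \varepsilon$. Taking expectations gives $|Q_{s,t}f(x)| \le \varepsilon$ on the complement of the compact set $\bar{B}_{M+R(s,t)}(0)$, which proves $Q_{s,t}f \in \hat{C}(\mathbb{R}^d,\mathbb{R})$.

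The only subtle ingredient is the pathwise (rather than merely $L^p$) nature of the bound in \cref{boundSol}, which is what makes the decay at infinity possible; without pathwise control, one could only guarantee a small-probability statement and $Q_{s,t}f$ could conceivably remain bounded away from zero at infinity.
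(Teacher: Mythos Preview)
Your argument is correct and follows the same underlying idea as the paper: both proofs reduce the vanishing-at-infinity claim to the fact that $|X_t^{s,t,x}|\to\infty$ almost surely as $|x|\to\infty$. The paper merely asserts this and points to Corollary~19.31 in Schilling--Partzsch, whereas you extract it directly from the pathwise, $x$-uniform bound of \cref{boundSol}; your route is therefore more self-contained. One cosmetic slip: after fixing $p=1$ you write $R(s,t)^p$ for the right-hand side --- just call it $R(s,t)$.
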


\begin{proof}
	It is enough to show that 
	\begin{align*}
		|X_t^{s,t,x}|\rightarrow \infty,
	\end{align*}
	almost surely, as $|x|\rightarrow \infty$. This can be shown in a similar way as it is done in Corollary 19.31 in \cite{SchillingPartzsch+2012}. The second part of the result was already proven in Lemma \ref{stability}.
\end{proof}

\begin{corollary}\label{strong_continuity}
	The family $(Q_{s,t})_{(s,t)\in \Delta_T}\subseteq L(\hat{C}(\mathbb{R}^d,\mathbb{R}),\hat{C}(\mathbb{R}^d,\mathbb{R}))$ is strongly continuous, that is for any $f\in \hat{C}(\mathbb{R}^d,\mathbb{R})$ we have 
	\begin{align*}
		|Q_{s,t}f-f|_\infty\rightarrow 0,
	\end{align*}
	as $(s,t)\rightarrow 0$
\end{corollary}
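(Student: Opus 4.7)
The plan is a standard $\epsilon/2$-splitting argument, combining the uniform $L^1$-smallness of the displacement $X_t^{s,t,x} - x$ (which comes directly from \Cref{boundSol}) with the uniform continuity of functions in $\hat{C}(\mathbb{R}^d, \mathbb{R})$.

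First I would record the elementary fact that every $f \in \hat{C}(\mathbb{R}^d, \mathbb{R})$ is uniformly continuous on $\mathbb{R}^d$: outside a sufficiently large compact set $K$ the function $f$ is smaller than any prescribed tolerance, while $f$ is uniformly continuous on a bounded neighborhood of $K$. Thus, given $\epsilon > 0$, there exists $\delta > 0$ such that $|x - y| < \delta$ implies $|f(x) - f(y)| < \epsilon/2$.

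Second, applying \Cref{boundSol} with $p = 1$ yields a constant $\tilde{C}$ independent of $x$ with
\begin{align*}
\E\left[|X_t^{s,t,x} - x|\right] \leq \tilde{C}\, \tilde{\chi}(s,t)^{1/2}\, e^{\tilde{C}\, \tilde{\chi}(s,t)}.
\end{align*}
Since $\tilde{\chi}(s,t) = \chi(s,t) + (t-s)\sqrt{\chi(s,t)} + (t-s)$ tends to $0$ as $t - s \to 0$ (uniformly on the compact $\overline{\Delta_T}$ by continuity of $\chi$ together with $\chi(s,s) = 0$), combining the above with Markov's inequality forces $\P(|X_t^{s,t,x} - x| \geq \delta)$ to be uniformly small in $x$.

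Finally, I would conclude via the splitting
\begin{align*}
|Q_{s,t}f(x) - f(x)| \leq \E\left[|f(X_t^{s,t,x}) - f(x)|\right] \leq \frac{\epsilon}{2} + 2|f|_\infty\, \P\left(|X_t^{s,t,x} - x| \geq \delta\right),
\end{align*}
taking the supremum over $x$ and then choosing $t-s$ sufficiently small. I do not anticipate a serious obstacle: the uniformity in $x$, which is precisely what allows the passage from pointwise smallness to $|\cdot|_\infty$-smallness, is already built into the $x$-independent right-hand side of the bound in \Cref{boundSol}.
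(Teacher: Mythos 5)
Your argument is correct and follows essentially the same route as the paper, which likewise deduces the corollary directly from the uniform continuity of $f\in\hat{C}(\mathbb{R}^d,\mathbb{R})$ and \Cref{boundSol}. The only remark is that the estimate in \Cref{boundSol} is a pathwise (almost sure) bound, uniform in $x$, so the Markov-inequality step is not even needed: once $\tilde{\chi}(s,t)$ is small enough the displacement $|X_t^{s,t,x}-x|$ is almost surely below $\delta$, and uniform continuity concludes directly.
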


\begin{proof}
	This is an immediate consequence of the uniform continuity of $f$ and Lemma \ref{boundSol}.
\end{proof}

%

\subsection{Commutator bounds}
\label{ssec:commutator}

For fixed \( s<t\in[0,T] \) recall that \( Q_{s,t}\in L(\hat{C}(\mathbb{R}^d,\mathbb{R}))\cap L(\mathcal{C}^{m+2}_b(\mathbb{R}^d,\mathbb{R}))\) is the linear operator \( f\mapsto \E(f(X^{s,t,\cdot}_t)) \) where $X_{\cdot}^{s,t,x}$ is the solution to the random ODE
	\[
	\begin{aligned}
		&dX^{s,t;x}_r= \sum_{i=0}^{d}V_i(X^{s,t;x}_r)dW^{s,t}_r\quad r\in [s,t]
		\\
		&X^{s,t;x}_s=x\,.
	\end{aligned}
	\]
Our next goal is to establish suitable estimates on the 3-parameter quantity \(Q_{s,t}-Q_{s,u}Q_{u,t} \) for \( s<u<t\in [0,T]\), for the operator norm
\( \|Q\|=\sup_{|f|_{\mathcal \mathcal{C}_b^{m+1}}}|Qf|_{\infty}.\)

\begin{theorem}
	\label{commutator}
	 For any $f\in \mathcal{C}^{m+1}_b(\mathbb{R}^d,\mathbb{R})$ it holds
	\begin{align*}
		\left|\E\left[f(X_t^{s,t,x})\right]- \E\left[f(X_t^{u,t,X_u^{s,u,x}})\right] \right|\leq C(m,d,p) \sup_{\substack{(i_1,...,i_k)\in \mathcal{A}_{m}\\\alpha=(i_0,i_1,...,i_k)\notin \mathcal{A}_m}}|V_\alpha f|_\infty  \tilde{\chi}(s,t)^{\frac{m+1}{2}}.
	\end{align*}
	for all $s\leq u\leq t$ in $[0,T]$ and some constant $C(m,d,p)>0$.
\end{theorem}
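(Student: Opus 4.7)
The plan is to stochastically Taylor-expand both $\E[f(X_t^{s,t,x})]$ and $\E[f(X_t^{u,t,X_u^{s,u,x}})]$ in the vector fields $V_\alpha$ up to level $m$ (measured in $\|\cdot\|$), then invoke the algebraic identity $\E[S^{s,t,m}_{s,t}]=\E[S^{s,u,m}_{s,u}]\otimes\E[S^{u,t,m}_{u,t}]$ from \cref{a1} to cancel the leading orders. What remains is a sum of explicit remainder terms, each bounded by $C\sup_{\alpha}|V_\alpha f|_\infty\tilde\chi(s,t)^{(m+1)/2}$ with $\alpha$ of the form specified in the statement.

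The basic tool is a pathwise stochastic Taylor expansion for the random ODE \eqref{state}: iterating the identity $g(X^{a,b,y}_b) = g(y) + \sum_i\int_a^b V_ig(X^{a,b,y}_r)\,dW^{a,b,i}_r$ level by level and stopping each branch at the first multi-index whose $\|\cdot\|$-length exceeds $k$ yields
\begin{align*}
g(X^{a,b,y}_b) = \sum_{\gamma\in\mathcal{A}_k} V_\gamma g(y)\, I^{a,b,\gamma}_{a,b} + R^{a,b,g,k}(y),
\end{align*}
where $R^{a,b,g,k}(y)$ is a finite sum of nested integrals $\int\!\cdots\!\int V_\alpha g(X^{a,b,y}_\cdot)\,dW^{a,b,\alpha_1}\cdots dW^{a,b,\alpha_j}$ indexed by boundary multi-indices $\alpha=(\alpha_1,\ldots,\alpha_j)$ satisfying $(\alpha_2,\ldots,\alpha_j)\in\mathcal{A}_k$ and $\alpha\notin\mathcal{A}_k$. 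The derivative bound on $W^{s,t}$ from \cref{a1} gives $|I^{a,b,\alpha}_{a,b}|\leq C\tilde\chi(a,b)^{\|\alpha\|/2}$, whence
\begin{align*}
|R^{a,b,g,k}(y)|\leq C\sup_{\alpha}|V_\alpha g|_\infty\,\tilde\chi(a,b)^{(k+1)/2}.
\end{align*}

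Now I would apply this expansion (i) on $[s,t]$ with $(g,k)=(f,m)$, (ii) on $[u,t]$ with $(g,k)=(f,m)$, and (iii) for every $\alpha\in\mathcal{A}_m$, on $[s,u]$ with $(g,k)=(V_\alpha f,\,m-\|\alpha\|)$, noting $V_\beta V_\alpha f = V_{\beta*\alpha}f$. Independence of $W^{s,u}$ and $W^{u,t}$ from \cref{a1} and the tower property give
\begin{align*}
\E[f(X_t^{u,t,X_u^{s,u,x}})]=\sum_{\alpha\in\mathcal{A}_m}\E[V_\alpha f(X^{s,u,x}_u)]\,\E[I^{u,t,\alpha}_{u,t}]+(\mathrm{rem}_{u,t}).
\end{align*}
Substituting (iii) converts the leading part into $\sum_{\gamma\in\mathcal{A}_m} V_\gamma f(x)\bigl(\sum_{\gamma=\beta*\alpha,\,\alpha\in\mathcal{A}_m,\,\beta\in\mathcal{A}_{m-\|\alpha\|}}\E[I^{s,u,\beta}_{s,u}]\E[I^{u,t,\alpha}_{u,t}]\bigr)$, in which the inner parenthesis is exactly the coefficient of the $\gamma$-th basis element of $\E[S^{s,u,m}_{s,u}]\otimes\E[S^{u,t,m}_{u,t}]$. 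By \cref{a1} this equals $\E[I^{s,t,\gamma}_{s,t}]$, which matches the leading term of the first expansion; subtracting leaves only the remainders.

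To finish, $|\E[R^{s,t,f,m}(x)]|$ and $|\E[R^{u,t,f,m}(X^{s,u,x}_u)]|$ are already dominated by $C\sup_\alpha|V_\alpha f|_\infty\tilde\chi(s,t)^{(m+1)/2}$; each middle contribution $|\E[\tilde R^{s,u,V_\alpha f,\,m-\|\alpha\|}(x)]\,\E[I^{u,t,\alpha}_{u,t}]|$ is bounded by $C\sup|V_{\beta*\alpha}f|_\infty\,\tilde\chi(s,u)^{(m+1-\|\alpha\|)/2}\tilde\chi(u,t)^{\|\alpha\|/2}\leq C\sup|V_\delta f|_\infty\,\tilde\chi(s,t)^{(m+1)/2}$, using $\tilde\chi(s,u),\tilde\chi(u,t)\leq\tilde\chi(s,t)$. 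I expect the main obstacle to be the combinatorial bookkeeping: verifying that the decompositions $\gamma=\beta*\alpha$ produced by nesting two Taylor expansions match the tensor-product structure in \cref{a1}, and that the boundary multi-indices occurring in each remainder are of the form $\alpha=(i_0,i_1,\ldots,i_k)$ with $(i_1,\ldots,i_k)\in\mathcal{A}_m$ and $\alpha\notin\mathcal{A}_m$, so that the final estimate indeed matches the supremum in the statement.
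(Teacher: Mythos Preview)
Your proposal is correct and follows essentially the same route as the paper: both Taylor-expand $f(X_t^{s,t,x})$ to level $m$, nest two such expansions for $f(X_t^{u,t,X_u^{s,u,x}})$, invoke independence together with the multiplicativity of the expected truncated signature from \cref{a1}(ii) to cancel the leading terms, and bound each remainder by $C\sup_\alpha|V_\alpha f|_\infty\,\tilde\chi(s,t)^{(m+1)/2}$ via \cref{a1}(i). The combinatorial bookkeeping you flag as the main obstacle is exactly what the paper carries out explicitly by rewriting the double sum over $(\alpha,\tilde\alpha)$ and matching it against the tensor-product expansion of $\E[S^m_{s,u}]\otimes\E[S^m_{u,t}]$.
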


\begin{proof}
	Using Taylor formula, we get 
	\begin{align*}
		f(X_t^{s,t,x})
		&=\sum_{\alpha\in \mathcal{A}_m} V_\alpha f(x)I^{s,t,\alpha}_{s,t}+R_m(s,t,x,f)\\
		&=\sum_{k=0}^{m}\sum_{\alpha\in \mathcal{A}_k\setminus\mathcal{A}_{k-1}} V_\alpha f(x)I^{s,t,\alpha}_{s,t}+R_m(s,t,x,f),
	\end{align*}
	where 
	\begin{align*}
		R_m(s,t,x,f)=\sum_{\substack{(i_1,...,i_k)\in \mathcal{A}_{m}\\\tilde{\alpha}=(i_0,i_1,...,i_k)\notin \mathcal{A}_m}}^{}I_{s,t}^{\tilde{\alpha}}[W^{s,t}](V_{\tilde{\alpha}} f(X_\cdot^{s,t,x})),
	\end{align*} 
	Similarly we have 
	\begin{align*}
		f(X_t^{u,t,X_u^{s,u,x}})
		&=\sum_{\alpha\in \mathcal{A}_m} V_\alpha f(X_u^{s,u,x})I^{u,t,\alpha}_{u,t}+R_m(u,t,X_u^{s,u,x},f)\\
		&=\sum_{k=0}^{m}\sum_{\alpha\in \mathcal{A}_k\setminus \mathcal{A}_{k-1}} V_\alpha f(X_u^{s,u,x})I^{u,t,\alpha}_{u,t}+R_m(u,t,X_u^{s,u,x},f)
	\end{align*}
	Again, using Taylor formula, we get for $\alpha \in \mathcal{A}_k\setminus\mathcal{A}_{k-1}$ and $\tilde{m}:=m-k$
	\begin{align*}
		V_\alpha f(X_u^{s,u,x})&=\sum_{\tilde{\alpha}\in \mathcal{A}_{\tilde{m}}} V_{\tilde{\alpha}} V_\alpha f(x)I^{s,u,\tilde{\alpha}}_{s,u}+R_{\tilde{m}}(s,u,x,V_{\alpha} f)\\
		&=\sum_{\tilde{\alpha}\in \mathcal{A}_{\tilde{m}}} V_{\alpha*\tilde{\alpha}} f(x)I^{s,u,\tilde{\alpha}}_{s,u}+R_{\tilde{m}}(s,u,x,V_{\alpha} f)\\
		&=\sum_{k_2=0}^{\tilde{m}}\sum_{\tilde{\alpha}\in \mathcal{A}_{k_2}\setminus \mathcal{A}_{k_2-1}} V_{\alpha*\tilde{\alpha}} f(x)I^{s,u,\tilde{\alpha}}_{s,u}+R_{\tilde{m}}(s,u,x,V_{\alpha} f).
	\end{align*}
	Therefore
	\begin{align*}
		f(X_t^{u,t,X_u^{s,u,x}})
		&=\sum_{k_1=0}^{m}\sum_{k_2=0}^{m-k_1}\sum_{\alpha\in \mathcal{A}_{k_1}\setminus\mathcal{A}_{k_1-1}}\sum_{\tilde{\alpha}\in \mathcal{A}_{k_2}\setminus \mathcal{A}_{k_2-1}} V_{\alpha*\tilde{\alpha}} f(x)I^{s,u,\tilde{\alpha}}_{s,u}I^{u,t,\alpha}_{u,t}\\
		&\quad + \sum_{k_1=0}^{m}\sum_{k_2=0}^{m-k_1}\sum_{\alpha\in \mathcal{A}_{k_1}\setminus\mathcal{A}_{k_1-1}}\sum_{\tilde{\alpha}\in \mathcal{A}_{k_2}\setminus \mathcal{A}_{k_2-1}} R_{m-k_1}(s,u,x,V_{\alpha} f)I^{u,t,\alpha}_{u,t}\\
		&\quad +R_m(u,t,X_u^{s,u,x},f).
	\end{align*}
	Substituting $k:=k_1+k_2$ we get 
	\begin{align*}
		f(X_t^{u,t,X_u^{s,u,x}})
		&=\sum_{k=0}^{m}\sum_{i=0}^{k}\sum_{\alpha\in \mathcal{A}_{k-i}\setminus\mathcal{A}_{k-i-1}}\sum_{\tilde{\alpha}\in \mathcal{A}_{i}\setminus \mathcal{A}_{i-1}} V_{\alpha*\tilde{\alpha}} f(x)I^{s,u,\tilde{\alpha}}_{s,u}I^{u,t,\alpha}_{u,t}\\
		&\quad + \sum_{k=0}^{m}\sum_{i=0}^{k}\sum_{\alpha\in \mathcal{A}_{k-i}\setminus\mathcal{A}_{k-i-1}}\sum_{\tilde{\alpha}\in \mathcal{A}_{i}\setminus \mathcal{A}_{i-1}} R_{i+(m-k)}(s,u,x,V_{\alpha} f)I^{u,t,\alpha}_{u,t}\\
		&\quad +R_m(u,t,X_u^{s,u,x},f).
	\end{align*}
	
	By Assumption \ref{a1}
	\begin{align*}
		\E\left[S_{s,t}^m\right]&=\sum_{k=0}^{m}\sum_{\alpha=(i_1,...,i_l)\in \mathcal{A}_k\setminus \mathcal{A}_{k-1}}\mathbb{E}\left[I_{s,t}^{s,t,\alpha}\right]e_{i_1}\otimes ...\otimes e_{i_l}
	\end{align*}
	satisfies 
	\begin{align*}
		&\E\left[S_{s,t}^m\right]=\E\left[S_{s,u}^m\right]\otimes \E\left[S_{u,t}^m\right]\\
		&=\sum_{k=0}^m \sum_{i=0}^{k}\left(\pi_{k-i}(\E\left[S_{s,u}^m\right])\otimes \pi_{i}(\E\left[S_{u,t}^m\right])\right)\\
		&=\sum_{k=0}^{m}\sum_{i=0}^{k}\left(\sum_{\substack{\alpha\in \mathcal{A}_{k-i}\setminus \mathcal{A}_{k-i-1}\\\alpha=(i_1,...,i_{l_1})}}\mathbb{E}\left[I_{s,u}^{s,u,\alpha}\right]e_{i_1}\otimes ...\otimes e_{i_{l_1}}\right) \otimes \left(\sum_{\substack{\tilde{\alpha}\in \mathcal{A}_{i}\setminus \mathcal{A}_{i-1}\\\tilde{\alpha}=(j_1,...,j_{l_2})}}\mathbb{E}\left[I_{u,t}^{u,t,\tilde{\alpha}}\right]e_{j_1}\otimes ...\otimes e_{j_{l_2}}\right)\\
		&=\sum_{k=0}^{m}\sum_{i=0}^{k}\sum_{\substack{\alpha\in \mathcal{A}_{k-i}\setminus \mathcal{A}_{k-i-1}\\\alpha=(i_1,...,i_{l_1})}}\sum_{\substack{\tilde{\alpha}\in \mathcal{A}_{i}\setminus \mathcal{A}_{i-1}\\\tilde{\alpha}=(j_1,...,j_{l_2})}}\mathbb{E}\left[I_{s,u}^{s,u,\alpha}\right]\mathbb{E}\left[I_{u,t}^{u,t,\tilde{\alpha}}\right]e_{i_1}\otimes ...\otimes e_{i_{l_1}}\otimes e_{j_1}\otimes ...\otimes e_{j_{l_2}}.
	\end{align*}
	
	Therefore we get 
	\begin{align*}
		\E\left[f(X_t^{s,t,x})\right]&=\sum_{k=0}^{m}\sum_{i=0}^{k} \sum_{\substack{\hat{\alpha}=(i_1,...,i_{l_1})\in \mathcal{A}_{k-i}\setminus \mathcal{A}_{k-i-1}\\\tilde{\alpha}=(j_1,...,j_{l_2})\in \mathcal{A}_{i}\setminus \mathcal{A}_{i-1}}}\mathbb{E}\left[I_{s,u}^{s,u,\hat{\alpha}}\right]\mathbb{E}\left[I_{u,t}^{u,t,\tilde{\alpha}}\right] V_{\hat{\alpha}*\tilde{\alpha}} f(x)\\
		&\quad +\E\left[R_m(s,t,x,f)\right]
	\end{align*}
	Now by definition $I^{s,u,\tilde{\alpha}}_{s,u},I^{u,t,\alpha}_{u,t}$ are independent, since $W^{s,u}$ and $W^{u,t}$ are independent. This leads to 
	\begin{align*}
		&\left|\E\left[f(X_t^{s,t,x})\right]- \E\left[f(X_t^{u,t,X_u^{s,u,x}})\right] \right|\\
		&\leq  \sum_{k=0}^{m}\sum_{i=0}^{k}\sum_{\alpha\in \mathcal{A}_{k-i}\setminus\mathcal{A}_{k-i-1}}\sum_{\tilde{\alpha}\in \mathcal{A}_{i}\setminus \mathcal{A}_{i-1}} \left|V_{\alpha*\tilde{\alpha}}f\right|_\infty \E\left[\left|R_{i+(m-k)}(s,u,x,V_{\alpha} f)\right|\right]\E\left[\left|I^{u,t,\alpha}_{u,t}\right|\right]\\
		&\quad +\E\left[\left|R_m(u,t,X_u^{s,u,x},f)\right|\right]+\E\left[|R_m(s,t,x,f)|\right]
		\,.
	\end{align*}
	
	Now using \cref{a1} (i), we have
	\begin{equation}\label{computations_R}
	\begin{aligned}
		&\sum_{\substack{(i_1,...,i_k)\in \mathcal{A}_{m}\\\tilde{\alpha}=(i_0,i_1,...,i_k)\notin \mathcal{A}_m}}^{}\left|\int_{s<t_0<...<t_l<t}Y_{t_1}^{i_0,...,i_k}dW_{t_1}^{s,t,i_0}....dW_{t_l}^{s,t,i_k}\right|\\
		&\leq \sup_{\substack{(i_1,...,i_k)\in \mathcal{A}_{m}\\\alpha=(i_0,i_1,...,i_k)\notin \mathcal{A}_m}}|Y^{i_0,...,i_k}|_\infty \sum_{\substack{(i_1,...,i_k)\in \mathcal{A}_{m}\\\tilde{\alpha}=(i_0,i_1,...,i_k)\notin \mathcal{A}_m}}^{}\int_{s<t_0<...<t_l<t}|dW_{t_0}^{s,t,i_0}|....|dW_{t_l}^{s,t,i_k}|\\
		&\leq C \sup_{\substack{(i_1,...,i_k)\in \mathcal{A}_{m}\\\alpha=(i_0,i_1,...,i_k)\notin \mathcal{A}_m}}|Y^{i_0,...,i_k}|_\infty \frac{|\mathcal{A}_{m+2}\setminus \mathcal{A}_m|}{((m+1)/2)!}\tilde{\chi}(s,t)^{\frac{m+1}{2}}
		\,. 
	\end{aligned}
	\end{equation}
	Therefore we get
	\begin{align*}
		\E\left[\left|R_m(u,t,X_u^{s,u,x},f)\right|\right]	&\leq C(m,d,p)\sup_{\substack{(i_1,...,i_k)\in \mathcal{A}_{m}\\\alpha=(i_0,i_1,...,i_k)\notin \mathcal{A}_m}}|V_\alpha f|_\infty  \tilde{\chi}(s,t)^{\frac{m+1}{2}},
	\end{align*} 
	for some $C(m,d,p)>0$. Putting everything together, we end up with 
	\[
		\left|\E\left[f(X_t^{s,t,x})\right]- \E\left[f(X_t^{u,t,X_u^{s,u,x}})\right] \right|\leq C(m,d,p) \sup_{\substack{(i_1,...,i_k)\in \mathcal{A}_{m}\\\alpha=(i_0,i_1,...,i_k)\notin \mathcal{A}_m}}|V_\alpha f|_\infty  \tilde{\chi}(s,t)^{\frac{m+1}{2}}.
		\qedhere
	\]
\end{proof}

\subsection{End of the proof of \cref{thm:main}}

We consider the Banach spaces $(\hat{C}(\mathbb{R}^d,\mathbb{R}),|\cdot|_\infty)$ and $(\hat{C}(\mathbb{R}^d,\mathbb{R})\cap\mathcal{C}_b^{m+1}(\mathbb{R}^d,\mathbb{R}),\|\cdot \|_{\mathcal{C}_b^{m+1}(\mathbb{R}^d,\mathbb{R})})$. By Lemma \ref{stability}, for any $(s,t)\in \Delta_T$, the operator $Q_{s,t}$ is an element of the operator algebra $L(\hat{C}(\mathbb{R}^d,\mathbb{R}))\cap L(\hat{C}(\mathbb{R}^d,\mathbb{R})\cap \mathcal{C}_b^{m+1}(\mathbb{R}^d,\mathbb{R}))$. Furthermore, as a result of Lemma \ref{strong_continuity} and Theorem \ref{commutator}, the family $(Q_{s,t})_{(s,t)\in \Delta_T}$ is strongly continuous and almost multiplicative with control $\tilde{\chi}$ and $z=\frac{m+1}{2}$. Finally, Lemma \ref{growth} and Lemma \ref{vanish} provide the growth rates
\begin{align*}
	\sup_{|f|_\infty \leq1}|Q_{s,t}^\pi f|_\infty\leq 1
\end{align*}
and 
\begin{align*}
	\max_{l=0,...,m+1}\left(\sup_{|f|_{\mathcal{C}_b^l(\mathbb{R}^d,\mathbb{R})}\leq 1}|Q_{s,t}^\pi f|_{\mathcal{C}^l_b(\mathbb{R}^d,\mathbb{R})}\right)\leq e^{\tilde{C}(m,d,\left|V\right|_{\mathcal{C}^{m+1}_b})\tilde{\chi}(s,t)},
\end{align*}
necessary to apply Theorem \ref{sewing}. As a result of Theorem \ref{sewing}, there exists a unique strongly continuous evolution system of contractions $(P_{s,t})_{(s,t)\in \Delta_T}$, such that 
\begin{align*}
	\sup_{\substack{f\in \hat{C}(\mathbb{R}^d,\mathbb{R})\cap \mathcal{C}_b^{m+1}(\mathbb{R}^d,\mathbb{R})\\|f|_{\mathcal{C}_b^{m+1}}\leq 1}} |Q_{s,t}^\pi f-P_{s,t}f|_\infty&\leq  2^{\frac{m+1}{2}}e^{\tilde{C}\tilde{\chi}(s,t)}\zeta(\frac{m+1}{2})\tilde{\chi}(s,t)\max_{1\leq i\leq k}\{\tilde{\chi}^{\frac{m-1}{2}}(t_i,t_{i+1})\}
\end{align*}
and 
\begin{align*}
		\sup_{\substack{f\in \hat{C}(\mathbb{R}^d,\mathbb{R})\cap \mathcal{C}_b^{m+1}(\mathbb{R}^d,\mathbb{R})\\|f|_{\mathcal{C}_b^{m+1}}\leq 1}} |Q_{s,t} f-P_{s,t}f|_\infty&\leq 2^{\frac{m+1}{2}}\zeta(\frac{m+1}{2})e^{\tilde{C}\tilde{\chi}(s,t)}\tilde{\chi}^{\frac{m+1}{2}}(s,t).
\end{align*}
It remains to determine the generator of  $(P_{s,t})_{(s,t)\in \Delta_T}$ on $\mathcal{C}_c^\infty(\mathbb{R}^d,\mathbb{R})$.

\begin{lemma}
	For any $t\in [0,T]$ and $f\in \mathcal{C}_c^\infty(\mathbb{R}^d,\mathbb{R})$ it holds 
	\begin{align*}
		\lim\limits_{h\downarrow 0}|\frac{P_{t,t+h}f-f}{h}-(V_0f+\frac{1}{2}\sum_{i=1}^{d}\sum_{j=1}^{d}a_{ij}(t)V_iV_jf)|_{\infty}\rightarrow 0.
	\end{align*}
\end{lemma}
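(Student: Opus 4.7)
My approach is twofold: first reduce the problem to the same statement for $(Q_{t,t+h}f-f)/h$ by means of the sewing-lemma error bound, then perform a short-interval stochastic Taylor expansion and identify the limit via Assumption~\ref{a1}(iii)--(iv).

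For the reduction, apply Theorem~\ref{sewing} (with $\chi=\tilde{\chi}$ and $z=(m+1)/2$) combined with the growth estimate of Lemma~\ref{growth} to obtain
\[
|P_{t,t+h}f - Q_{t,t+h}f|_\infty \le C\,\tilde{\chi}(t,t+h)^{(m+1)/2}\,|f|_{\mathcal{C}_b^{m+1}}.
\]
Since $\tilde{\chi}(t,t+h)=O(h)$ by Assumption~\ref{a1}(ii) and $(m+1)/2\ge 3/2$, the right-hand side is $o(h)$ uniformly in $x$. Hence it suffices to establish the limit for $(Q_{t,t+h}f-f)/h$.

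For the Taylor expansion, I would use the same formula as at the beginning of the proof of Theorem~\ref{commutator} at order $m=2$:
\[
f(X_{t+h}^{t,t+h,x}) \;=\; \sum_{\alpha\in\mathcal{A}_2} V_\alpha f(x)\,I_{t,t+h}^{t,t+h,\alpha} \;+\; R_2(t,t+h,x,f),
\]
where $\mathcal{A}_2=\{\emptyset,(0),(i),(i,j):i,j\in\{1,\ldots,d\}\}$. The estimate \eqref{computations_R} applied with $m=2$ gives $|R_2|\le C\,\tilde{\chi}(t,t+h)^{3/2}\sup_{\|\alpha\|\le 3}|V_\alpha f|_\infty$ pathwise, which is $O(h^{3/2})$ uniformly in $x$ because $f\in\mathcal{C}_c^\infty$ has all iterated Lie-derivatives $V_\alpha f$ globally bounded. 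Taking expectations and using Assumption~\ref{a1}(iii) to kill the first-order terms $V_i f(x)\,\E[\Delta W^{t,t+h,i}]=0$ yields
\[
Q_{t,t+h}f(x) - f(x) \;=\; h\,V_0 f(x) \;+\; \sum_{i,j=1}^d V_iV_j f(x)\,\E\bigl[I_{t,t+h}^{t,t+h,(i,j)}\bigr] \;+\; o(h),
\]
uniformly in $x\in\mathbb{R}^d$.

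The conclusion follows by dividing by $h$ and invoking Assumption~\ref{a1}(iv): $\E[I_{t,t+h}^{t,t+h,(i,j)}]/h\to a_{ij}(t)$, so the difference quotient converges in $|\cdot|_\infty$ to the stated generator. The factor $\tfrac{1}{2}$ in the claimed limit is recovered by symmetrizing the sum in $(i,j)$ via the pathwise identity $I^{(i,j)}+I^{(j,i)}=\Delta W^i\Delta W^j$ (which holds by integration by parts for the smooth path $W^{t,t+h}$) together with the symmetry of the limiting matrix $a(t)$.

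\textbf{Main obstacle.} The most delicate aspect is securing the $o(h)$ remainder \emph{uniformly in $x$}, which hinges crucially on the compact support of $f$ (so that all iterated Lie-derivatives $V_\alpha f$ are globally bounded on $\mathbb{R}^d$); the convergence $\E[I^{(i,j)}_{t,t+h}]/h\to a_{ij}(t)$ itself is only pointwise in $t$, but that is enough because $t$ is held fixed.
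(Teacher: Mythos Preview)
Your approach is essentially identical to the paper's own proof: the paper too performs a Taylor expansion of $f(X^{t,t+h,x}_{t+h})$ up to $\mathcal{A}_2$, bounds the remainder by a power of $\tilde\chi(t,t+h)$, applies Assumption~\ref{a1}(iii)--(iv) to identify the limit of $(Q_{t,t+h}f-f)/h$, and then invokes the sewing estimate $|P_{t,t+h}f-Q_{t,t+h}f|_\infty\le C\tilde\chi(t,t+h)^{(m+1)/2}$ together with $\tilde\chi(t,t+h)=O(h)$ to pass from $Q$ to $P$. Both reductions and both expansions are the same; there is nothing structurally different.

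There is, however, a genuine flaw in your final paragraph. The symmetrization argument you sketch does not produce the factor~$\tfrac12$. Relabelling indices gives $\sum_{i,j}a_{ij}V_iV_j=\sum_{i,j}a_{ji}V_jV_i$, which is the \emph{same} sum; it does \emph{not} yield $\tfrac12\sum_{i,j}a_{ij}V_iV_j$. Moreover nothing in Assumption~\ref{a1} forces $a(t)$ to be symmetric, and since the vector fields do not commute in general you cannot trade $V_iV_j$ for $V_jV_i$. The honest computation from your expansion and from Assumption~\ref{a1}(iv) gives
\[
\lim_{h\downarrow0}\frac{Q_{t,t+h}f-f}{h}=V_0f+\sum_{i,j=1}^{d}a_{ij}(t)\,V_iV_jf,
\]
with no $\tfrac12$. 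The paper's own proof simply asserts the limit with the $\tfrac12$ and does not derive it either; in fact, checking the Brownian cubature example (where $\E[I^{(i,j)}_{s,t}[\circ B]]=\tfrac12(t-s)\delta_{ij}$, so $a_{ij}=\tfrac12\delta_{ij}$) shows that the formula \emph{without} the extra $\tfrac12$ is the one that reproduces the Stratonovich generator $V_0+\tfrac12\sum_iV_i^2$. So the factor $\tfrac12$ in the statement appears to be a normalization inconsistency in the paper, and you should not try to manufacture it by an argument that does not hold.
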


\begin{proof}
	A simple appliation of Taylor formula leads to 
	\begin{align*}
		f(X_{t+h}^{t,t+h,x})&=f(x)+V_0f(x)h+\sum_{i=1}^{d}V_if(x)I_{t,t+h}^{t,t+h,(i)}+\sum_{i=1}^{d}\sum_{j=1}^{d}V_iV_jf(x)I_{t,t+h}^{t,t+h,(i,j)}\\
		&\quad +R_3(t,t+h,x,f).
	\end{align*}
	Since 
	\begin{align*}
		\mathbb{E}\left[|R_3(t,t+h,x,f)|\right]\leq C \sup_{ \substack{(i_1,...,i_k)\in \mathcal{A}_{3}\\\alpha=(i_0,i_1,...,i_k)\notin \mathcal{A}_3}}|V_\alpha f|\tilde{\chi}(t,t+h)^{2},
	\end{align*}
	Assumption \ref{a1} allows us to conclude, that
	\begin{align*}
		\lim\limits_{h\downarrow 0}|\frac{Q_{t,t+h}f-f}{h}-(V_0f+\frac{1}{2}\sum_{i=1}^{d}\sum_{j=1}^{d}a_{ij}(t)V_iV_jf)|_\infty=0.
	\end{align*}
	Using the triangle inequality and Assumption \ref{a1} again, we end up with 
	\begin{align*}
		&|\frac{P_{t,t+h}f-f}{h}-(V_0f+\frac{1}{2}\sum_{i=1}^{d}\sum_{j=1}^{d}a_{ij}(t)V_iV_jf)|_{\infty}\\
		&\leq |\frac{P_{t,t+h}f-Q_{t,t+h}f}{h}|_{\infty}+|\frac{Q_{t,t+h}f-f}{h}-(V_0f+\frac{1}{2}\sum_{i=1}^{d}\sum_{j=1}^{d}a_{ij}(t)V_iV_jf)|_{\infty}\\
		&\leq |\frac{2^{\frac{m+1}{2}}\zeta(\frac{m+1}{2})e^{\tilde{C}\tilde{\chi}(t,t+h)}\tilde{\chi}(t,t+h)^{\frac{m+1}{2}}}{h}|\\
		&\quad +|\frac{Q_{t,t+h}f-f}{h}-(V_0f+\frac{1}{2}\sum_{i=1}^{d}\sum_{j=1}^{d}a_{ij}(t)V_iV_jf)|_{\infty}\\
		&\rightarrow 0,
	\end{align*}
	as $h\downarrow 0$.
\end{proof}

Since for any $(s,t)\in \Delta_T$ and any partition $\pi=\{s=t_0<t_1<...<t_k=t\}$ the operator $Q_{s,t}^\pi\in L(\hat{C}(\mathbb{R}^d,\mathbb{R}),\hat{C}(\mathbb{R}^d,\mathbb{R}))$ is positivity preserving, $\psi_{s,t}$ is also positivity preserving.
Let now $x\in \mathbb{R}^d$ and $s\in [0,T]$. By Riesz's representation theorem, there exists a family of Borel measures $(p_{s,t}(x,dy))_{t\in [s,T]}$, such that 
\begin{align*}
	P_{s,t}f(x)=\int_{\mathbb{R}^d}^{}f(y)p_{s,t}(x,dy),
\end{align*}
for any $f\in \hat{C}(\mathbb{R}^d,\mathbb{R})$. Thanks to the evolution property and strong continuity, we have for any $f\in \mathcal{D}(\mathcal{A}_t)$
\begin{align*}
	\dfrac{\partial}{\partial t}P_{s,t}f=P_{s,t}\mathcal{A}_tf.
\end{align*}
Therefore it holds for any $t\in [s,T]$ and any $f\in\mathcal{C}_c^\infty (\mathbb{R}^d,\mathbb{R})$
\begin{equation}\label{forward}
	\begin{aligned}
		\frac{\partial}{\partial t}\int_{\mathbb{R}^d}^{}f(y)dp_{s,t}(x,dy)&=\int_{\mathbb{R}^d}^{}\mathcal{A}_tf(y)dp_{s,t}(x,dy),\\
		p_{s,s}(x,dy)&=\delta_x(dy).
	\end{aligned}
\end{equation}
Due to the non-degeneracy assumption, by Theorem 1 of \cite{BentataCont}, the unique solution to equation (\ref{forward}) is given by the conditional distribution
\begin{align*}
	p_{s,t}(x,dy)=\mathcal{L}_{Q}(X_t\in dy|X_s=x),
\end{align*}
where $Q$ is the unique solution to the martingale problem $(x,(\mathcal{A}_t)_{t\in [0,T]},\mathcal{C}_c^\infty(\mathbb{R}^d,\mathbb{R}))$ and $(X_t)_{t\in [0,T]}$ is the canonical process on $\mathcal{C}([0,T],\mathbb{R}^d)$. This finished the proof of our main theorem.

\section{Examples}

In this section we will construct a family of random paths of bounded variation $(W^{s,t})_{(s,t)\in \Delta_T}$ that satisfies Assumption \ref{a1} with order $m=5$, such that for any $f\in \mathcal{C}_c^\infty(\mathbb{R},\mathbb{R})$ and any sequence of partitions $(\pi_n)_n$ of $[0,T]$ with $|\pi_n|\rightarrow 0$, the approximation $Q_{0,T}^{\pi_n}f$ converges to $\mathbb{E}\left[f(X_t)\right]$, where $(X_t)_{t\in [0,T]}$ is the unique solution to
\begin{equation}\label{gaussian}
	\begin{aligned}
		dX_t&= V_0(X_t)dt+V(X_t)\circ dM_t \quad t\in [0,T]
		\\
		X_0&=x,
	\end{aligned}
\end{equation}
for some real valued Gaussian martingale $(M_t)_{t\in [0,T]}$.

\begin{lemma}\label{expected_signature_mart}
	Let $M$ be a real valued, continuous Gaussian martingale with quadratic variation $(\langle M \rangle_r)_{r\in [0,T]}=(\E\left[M_r^2\right])_{r\in [0,T]}$. Let $(s,t)\in \Delta_T$ and $\alpha\in \mathcal{A}_5$, then it holds
	\begin{align*}
		\E\left[I_{s,t}^\alpha[\circ M]\right]&=\begin{cases}
			0, & \text{if } \|\alpha\|\text{ is odd, or }\alpha=(1,0,1),\\
			\frac{1}{k!}(t-s)^k, & \text{if }\alpha \in \{(i_1,...,i_k)|i_1=...=i_k=1\},\\
			\frac{1}{k}I_{s,t}^{(i_1,...,i_{k/2})}[\langle M\rangle], & \text{if }\alpha \in \{(i_1,...,i_k)|i_1=...=i_k=0\},\text{ for }k=2,4,\\
			\frac{1}{2}I_{s,t}^{(0,1)}[\langle M\rangle], & \text{if }\alpha=(0,1,1),\\
			\frac{1}{2}I_{s,t}^{(1,0)}[\langle M\rangle], & \text{if }\alpha=(1,1,0).
		\end{cases}
	\end{align*}
\end{lemma}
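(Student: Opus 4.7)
The plan is to exploit two features of continuous Gaussian martingales: that $\langle M\rangle$ is deterministic, and that $-M$ has the same law as $M$. Combined with the Stratonovich--It\^o conversion $\int Y\circ dM = \int Y\,dM + \tfrac12\langle Y, M\rangle$ and the vanishing of It\^o-integral expectations, every case reduces to a short, case-by-case computation. I would treat the vanishing cases first, then the explicit ones.

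\textbf{Vanishing cases.} Writing $n_1(\alpha)=\#\{i:\alpha_i=1\}$ and $n_0(\alpha)=\#\{i:\alpha_i=0\}$, one has $\|\alpha\|=n_1(\alpha)+2n_0(\alpha)$, so $\|\alpha\|$ and $n_1(\alpha)$ share the same parity. Replacing $M$ by $-M$ turns each factor $\circ dM$ into $-\circ dM$ and leaves the $dt$ factors unchanged, so $I^\alpha_{s,t}[\circ(-M)]=(-1)^{n_1(\alpha)}I^\alpha_{s,t}[\circ M]$. Since $-M$ has the same law as $M$, taking expectations forces $\E[I^\alpha_{s,t}[\circ M]]=0$ whenever $\|\alpha\|$ is odd. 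For the remaining case $\alpha=(1,0,1)$ (where $n_1=2$ is even), compute the two inner integrations: they give $A_{t_3}:=\int_s^{t_3}(M_{t_2}-M_s)\,dt_2$, which has paths of bounded variation in $t_3$. Therefore $\langle A,M\rangle\equiv 0$, the outer Stratonovich integral $\int_s^t A_{t_3}\circ dM_{t_3}$ equals the corresponding It\^o integral, and its expectation vanishes.

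\textbf{Explicit cases.} The all-zeros case $\alpha=(0,\ldots,0)$ is a deterministic iterated Lebesgue integral, giving $(t-s)^k/k!$ directly. For $\alpha=(1,\ldots,1)$ with $k$ ones, use the classical telescoping identity $I^{(1,\ldots,1)}_{s,t}[\circ M]=(M_t-M_s)^k/k!$; since $M_t-M_s$ is centered Gaussian with variance $\langle M\rangle_t-\langle M\rangle_s$, Isserlis' formula gives $\E[(M_t-M_s)^k]=(k-1)!!\,(\langle M\rangle_t-\langle M\rangle_s)^{k/2}$ for even $k$. Combining with $(\langle M\rangle_t-\langle M\rangle_s)^{k/2}=(k/2)!\,I^{(1,\ldots,1)}_{s,t}[\langle M\rangle]$ and $(k-1)!!/k!=1/\bigl(2^{k/2}(k/2)!\bigr)$ yields $2^{-k/2}I^{(1,\ldots,1)}_{s,t}[\langle M\rangle]$, which for $k=2$ and $k=4$ agrees with the stated $\tfrac1k I^{(1,\ldots,1)}_{s,t}[\langle M\rangle]$. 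Finally, for the mixed cases $\alpha=(0,1,1)$ and $\alpha=(1,1,0)$, convert the remaining Stratonovich factors involving $M$ to It\^o integrals plus $\tfrac12$ the covariation with $M$; the It\^o pieces have vanishing mean, while the covariation pieces, thanks to the deterministic $\langle M\rangle$, reduce to $\tfrac12 I^{(0,1)}_{s,t}[\langle M\rangle]$ and $\tfrac12 I^{(1,0)}_{s,t}[\langle M\rangle]$ respectively. (For $(1,1,0)$, it is even cleaner to first evaluate the middle Stratonovich integral as $\tfrac12(M_{t_3}-M_s)^2$ via the Stratonovich chain rule, then integrate $dt_3$ and take expectations.)

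The main obstacle is the careful bookkeeping of the Stratonovich-to-It\^o corrections in the mixed cases: one must correctly identify which inner processes carry nontrivial quadratic covariation with $M$ and which have bounded variation and therefore produce no correction. Once this is sorted, the determinism of $\langle M\rangle$ collapses every remaining expectation to a pathwise iterated Stieltjes integral, matching the formulas claimed in the lemma.
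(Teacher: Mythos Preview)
Your argument is correct and follows the same route as the paper, whose entire proof is the single sentence ``This follows by explicitly calculating $\E[I_{s,t}^\alpha[\circ M]]$ for any $\alpha\in\mathcal A_5$.'' You carry out precisely that case-by-case calculation, and your use of the distributional symmetry $M\mapsto -M$ to dispose of all odd-$\|\alpha\|$ words in one stroke is a clean shortcut the paper does not spell out.

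One remark on bookkeeping: under the paper's convention ($M^0_r=r$, so index $0$ is time and index $1$ is $\circ dM$), your computations assign $(t-s)^k/k!$ to the all-zeros word and $\tfrac1k I^{(1,\dots,1)}_{s,t}[\langle M\rangle]$ to the all-ones word, whereas the lemma as printed has these two conditions interchanged. Your version is the correct one: indeed $\|(0,0,0,0)\|=8\notin\mathcal A_5$, so the printed ``$k=4$ all-zeros'' case cannot even occur, and the all-zeros integral is deterministic and cannot involve $\langle M\rangle$. The mixed cases $(0,1,1)$, $(1,1,0)$ and $(1,0,1)$ are consistent with your reading, so this is a typographical slip in the displayed conditions rather than in the values; it would be worth saying so explicitly in your write-up.
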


\begin{proof}
	This follows by explicitly calculating $\E\left[I_{s,t}^\alpha[\circ M]\right]$ for any $\alpha\in \mathcal{A}_5$.
\end{proof}

\begin{definition}
	For $f,g\in \mathcal{C}^{1-\text{var}}([0,T],\mathbb{R})$ we introduce the notation of the commutator 
	\begin{align*}
		[f,g]_{s,t}:=\int_{s}^{t}\int_{s}^{u}df(r)dg(u)-\int_{s}^{t}\int_{s}^{u}dg(r)df(u),
	\end{align*} 
	for $0\leq s\leq t\leq T$.
\end{definition}

\begin{lemma}\label{example}
	Let $M$ be a real valued, continuous Gaussian martingale with quadratic variation $(\langle M \rangle_r)_{r\in [0,T]}=(\E\left[M_r^2\right])_{r\in [0,T]}$. Let $h:\mathbb{R}\rightarrow \mathbb{R},r\mapsto r$. Assume that for any $(s,t)\in \Delta_T$ it holds
	\begin{align*}
		\frac{135}{88}[\langle M\rangle,h]_{s,t}^2\leq (t-s)^2(\langle M \rangle_t-\langle M\rangle_s)^2.
	\end{align*}
	For $(s,t)\in \Delta_T$ we define $\omega^{s,t}:[0,1]\rightarrow \mathbb{R}$ as
	\begin{align*}
		\omega^{s,t}_r:=\begin{cases}
			a(s,t)r & r\in [0,\frac{1}{3}]\\
			b(s,t)r+b_0(s,t)& r\in [\frac{1}{3},\frac{2}{3}]\\
			c(s,t)r+c_0(s,t)& r\in [\frac{2}{3},1]
		\end{cases},
	\end{align*}
	where 
	\begin{align*}
		a(s,t)&:=-\frac{\kappa(s,t) +\frac{1}{2}[\langle M\rangle,h]_{s,t}-8I_{s,t}^{(1,0)}(\langle M\rangle)}{2(t-s)\sqrt{\langle M\rangle_t-\langle M\rangle_s}},\\
		b(s,t)&:=\frac{\kappa(s,t) +[\langle M\rangle,h]_{s,t}-2I_{s,t}^{(1,0)}(\langle M\rangle)}{(t-s)\sqrt{\langle M\rangle_t-\langle M\rangle_s}},\\
		c(s,t)&:=-\frac{\kappa(s,t) +\frac{17}{2}[\langle M\rangle,h]_{s,t}-8I_{s,t}^{(1,0)}(\langle M\rangle)}{2(t-s)\sqrt{\langle M\rangle_t-\langle M\rangle_s}},\\
		b_0(s,t)&:=-\frac{\kappa(s,t) +\frac{1}{2}[\langle M\rangle,h]_{s,t}-4I_{s,t}^{(1,0)}(\langle M\rangle)}{2(t-s)\sqrt{\langle M\rangle_t-\langle M\rangle_s}},\\
		c_0(s,t)&:=\frac{\kappa(s,t) +\frac{13}{2}[\langle M\rangle,h]_{s,t}-4I_{s,t}^{(1,0)}(\langle M\rangle)}{2(t-s)\sqrt{\langle M\rangle_t-\langle M\rangle_s}},
	\end{align*}
	for
	\begin{align*}
		\kappa(s,t):=\sqrt{22}\sqrt{(t-s)^2(\langle M \rangle_t-\langle M\rangle_s)^2-\frac{135}{88}[\langle M\rangle,h]_{s,t}^2}.
	\end{align*}
	Then any family $(W^{(s,t)})_{(s,t)\in \Delta_T}$ with 
	\begin{equation}\label{cubFam}
		\begin{aligned}
		\mathbb{P}\left[W^{s,t}=\sqrt{3}\omega^{s,t}\circ \phi\right]&=\frac{1}{6}\\
		\mathbb{P}\left[W^{s,t}=-\sqrt{3}\omega^{s,t}\circ \phi\right]&=\frac{1}{6}\\
		\mathbb{P}\left[W^{s,t}_r=0\right]&=\frac{2}{3},
		\end{aligned}
	\end{equation}
	where $\phi:[s,t]\rightarrow [0,1],r\mapsto \frac{r-s}{t-s}$, satisfies 
	\begin{align*}
		\E\left[S_{s,t}^5[W^{s,t}]\right]=\E\left[S_{s,t}^5[\circ M]\right],
	\end{align*}
	for any $(s,t)\in \Delta_T$.
\end{lemma}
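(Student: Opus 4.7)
The plan is to verify the identity \(\E[S^5_{s,t}[W^{s,t}]] = \E[S^5_{s,t}[\circ M]]\) coordinate-by-coordinate over \(\mathcal{A}_5\), reducing to a small finite set of scalar identities via the built-in symmetry of the law \eqref{cubFam} and then matching them against the explicit formulas in \cref{expected_signature_mart}. Throughout, write \(\tau := t-s\), \(\sigma^2 := \langle M\rangle_t - \langle M\rangle_s\), \(A := [\langle M\rangle, h]_{s,t}\), \(B := I^{(1,0)}_{s,t}[\langle M\rangle]\), and \(k_1(\alpha) := |\{i : \alpha_i = 1\}|\); since \(\|\alpha\| = 2|\{i:\alpha_i=0\}| + k_1(\alpha)\), the parity of \(\|\alpha\|\) coincides with the parity of \(k_1(\alpha)\).

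First, because \(-W^{s,t}\) has the same law as \(W^{s,t}\) (the first two atoms are exchanged and the third is fixed), every iterated integral with \(k_1(\alpha)\) odd has vanishing expectation on the \(W\)-side, matching the vanishing on the \(M\)-side given by \cref{expected_signature_mart}. When \(\alpha\) contains only \(0\)-coordinates both sides are deterministic and equal \(\tau^{|\alpha|}/|\alpha|!\). The non-trivial cases to check are therefore the five words
\begin{equation*}
\alpha \in \{(1,1),\,(0,1,1),\,(1,0,1),\,(1,1,0),\,(1,1,1,1)\}\,,
\end{equation*}
and, for such \(\alpha\), the three-atom formula \eqref{cubFam} reduces the left-hand side to
\begin{equation*}
\E\bigl[I^\alpha_{s,t}[W^{s,t}]\bigr] = \tfrac{1}{6}\bigl(3^{k_1/2} + (-\sqrt{3})^{k_1}\bigr)\,I^\alpha_{s,t}[\omega^{s,t}\circ\phi] = \tfrac{3^{k_1/2}}{3}\,I^\alpha_{s,t}[\omega^{s,t}\circ\phi],
\end{equation*}
which is a deterministic iterated integral of a piecewise-affine path and hence reduces to an explicit polynomial in \((a,b,c,b_0,c_0)\) after splitting the time domain at \(s+\tau/3\) and \(s+2\tau/3\).

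On the \(M\)-side, \cref{expected_signature_mart} yields \(\tfrac12\sigma^2\), \(\tfrac12 I^{(0,1)}_{s,t}[\langle M\rangle]\), \(0\), \(\tfrac12 I^{(1,0)}_{s,t}[\langle M\rangle]\), and \(\tfrac{1}{8}\sigma^4\) respectively; the relations \(I^{(1,0)}_{s,t}[\langle M\rangle] + I^{(0,1)}_{s,t}[\langle M\rangle] = \tau\sigma^2\) and \(I^{(1,0)}_{s,t}[\langle M\rangle] - I^{(0,1)}_{s,t}[\langle M\rangle] = A\) (via integration by parts against \(h(r)=r\)) allow one to re-express these quantities in the variables \(A, B\) in which \(a,b,c,b_0,c_0\) are naturally given. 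Plugging the prescribed coefficients into each of the five polynomials and simplifying then becomes a mechanical algebraic verification: the four lower-order identities form (after elimination) a linear system whose unique solution in terms of \(\kappa, A, B\) matches the stated formulas.

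The main technical obstacle is the length-four constraint \(I^{(1,1,1,1)}_{s,t}[\omega^{s,t}\circ\phi] = \tfrac{1}{24}\sigma^4\); once the other four identities are imposed, this fourth-order relation becomes quadratic in the remaining free parameter, and solving it produces the square root appearing in \(\kappa = \sqrt{22}\sqrt{\tau^2\sigma^4 - \tfrac{135}{88}A^2}\). The standing hypothesis \(\tfrac{135}{88}A^2 \leq \tau^2\sigma^4\) is exactly the condition ensuring a non-negative radicand, and hence the existence of a real-valued path \(\omega^{s,t}\) realizing the cubature. The degenerate identity \(I^{(1,0,1)}_{s,t}[\omega^{s,t}\circ\phi]=0\) then serves as a final algebraic consistency check, and once all five scalar identities are confirmed, the coordinate-wise equality of \(\E[S^5_{s,t}[W^{s,t}]]\) and \(\E[S^5_{s,t}[\circ M]]\) follows.
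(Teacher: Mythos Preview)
Your proposal is correct and follows essentially the same route as the paper: both reduce the claim to checking the finitely many non-trivial words in $\mathcal{A}_5$ (your five words correspond exactly to the paper's equations (\ref{eq1})--(\ref{eq5}), with (\ref{eq6}) fixing the weights), exploit the $\pm$-symmetry of the law to kill odd-parity contributions, and treat the $(1,0,1)$ identity as an automatic consistency check once the others are in place. The only cosmetic difference is framing: the paper solves the resulting system to \emph{derive} the coefficients, whereas you propose to \emph{plug in} the stated coefficients and verify; the paper itself remarks at the outset that ``the result can be proven by simply computing the corresponding expected iterated integrals'', which is precisely your plan.
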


\begin{remark}
	If $M$ is just a one dimensional Brownian motion, then $\langle M\rangle_r=r$ and
	\begin{align*}
		I_{s,t}^{(1,0)}(\langle M\rangle)&=\frac{1}{2}(t-s)^2\\
		[\langle M\rangle,h]_{s,t}&=0,
	\end{align*}
	for all $(s,t)\in \Delta_T$. The expression for $\omega^{s,t}$ then simplifies to 
	\begin{align*}
		\omega^{s,t}_r=\begin{cases}
			\frac{1}{2}(4-\sqrt{22})r & r\in [0,\frac{1}{3}]\\
			(\sqrt{22}-1)r+\frac{1}{2}(2-\sqrt{22})& r\in [\frac{1}{3},\frac{2}{3}]\\
			\frac{1}{2}(4-\sqrt{22})r+\frac{1}{2}(\sqrt{22}-2)& r\in [\frac{2}{3},1]
		\end{cases}.
	\end{align*}
	Therefore the approximation from Lemma \ref{example} coincides in this case with the cubature formula constructed in \cite{article}.
\end{remark}

\begin{proof}
	The result can be proven by simply computing the corresponding expected iterated integrals of a family $(W^{(s,t)})_{(s,t)\in \Delta_T}$ which satisfies (\ref{cubFam}). However, we will explain how to derive the coefficients for
	\begin{align*}
		\mathbb{P}\left[W^{s,t}=\omega^{s,t}\circ \phi\right]&=\lambda_1\\
		\mathbb{P}\left[W^{s,t}=-\omega^{s,t}\circ \phi\right]&=\lambda_1\\
		\mathbb{P}\left[W^{s,t}_r=0\right]&=\lambda_2,
	\end{align*}
	where $\phi(r)=\frac{r-s}{t-s}$ and
	\begin{align*}
		\tilde{\omega}^{s,t}_r:=\begin{cases}
			a(s,t)r & r\in [0,\frac{1}{3}]\\
			b(s,t)r+b_0(s,t)& r\in [\frac{1}{3},\frac{2}{3}]\\
			c(s,t)r+c_0(s,t)& r\in [\frac{2}{3},1].
		\end{cases}
	\end{align*}
	For any $(s,t)\in \Delta_T$ we have
	\begin{align*}
		\int_{s}^{t}\left(\omega^{s,t}_r\right)^2dr&=\frac{(t-s)}{81}\left(a(s,t)^2+7b(s,t)^2+19c(s,t)^2\right)\\
		&\quad + \frac{(t-s)}{3}\left(b(s,t)b_0(s,t)+b_0(s,t)^2+\frac{5}{3}c(s,t)c_0(s,t)+c_0(s,t)^2\right)
	\end{align*}
	and
	\begin{align*}\label{int}
		\int_{s}^{t}\omega^{s,t}_rdr&=\frac{(t-s)}{18}\left(a(s,t)+3b(s,t)+5c(s,t)\right)\\
		&\quad + \frac{(t-s)}{3}\left(b_0(s,t)+c_0(s,t)\right).
	\end{align*}
	Following a similar pattern as in \cite{PASSEGGERI20201226}, in order to match the expectation of the iterated integrals of $W^{s,t}$ and $M$, we only need to take the following equations into consideration:
	\begin{equation}\label{eq1}
		2\lambda_1\int_{s}^{t}(\omega_r^{s,t})^2dr=\int_{s}^{t}\left(\langle M\rangle_r-\langle M\rangle_s\right)dr,
	\end{equation}
	\begin{equation}\label{eq2}
	2\lambda_1\left(\frac{1}{2} \int_{s}^{t}(\omega_r^{s,t})^2dr+\int_{s}^{t}\int_{s}^{t_2}\int_{s}^{t_1}drd\omega_{t_1}^{s,t}d\omega_{t_2}^{s,t}\right)=\frac{1}{2}(t-s)\left(\langle M\rangle_t-\langle M\rangle_s\right),
	\end{equation}
	\begin{equation}\label{eq3}
		2\lambda_1 \int_{s}^{t}\int_{s}^{t_2}\int_{s}^{t_1}d\omega^{s,t}_rdt_1d\omega^{s,t}_{t_2}=0,
	\end{equation}
	\begin{equation}\label{eq4}
		2\lambda_1 \left(\omega_t^{s,t}\right)^2=\left(\langle M\rangle_t-\langle M\rangle_s\right),
	\end{equation}
	\begin{equation}\label{eq5}
		2\lambda_1 \left(\omega_t^{s,t}\right)^4=3\left(\langle M\rangle_t-\langle M\rangle_s\right)^2,
	\end{equation}
	\begin{equation}\label{eq6}
		2\lambda_1+\lambda_2=1.
	\end{equation}
	For $\omega^{s,t}$ to be continuous, we get the additional equations
	\begin{align*}
		b_0(s,t)&=(\sqrt{3(\langle M\rangle_t-\langle M\rangle_s)}-\frac{c(s,t)}{3}-\frac{2b(s,t)}{3})\\
		c_0(s,t)&=(\sqrt{3(\langle M\rangle_t-\langle M\rangle_s)}-c(s,t)).
	\end{align*}
	From equation (\ref{eq4}) and (\ref{eq5}) we get $\lambda_1=\frac{1}{6},\lambda_2=\frac{2}{3}$. Now integration by parts yield
	\begin{align*}
		\int_{s}^{t}\int_{s}^{t_2}\int_{s}^{t_1}drd\omega_{t_1}^{s,t}d\omega_{t_2}^{s,t}&=\int_{s}^{t}\int_{s}^{t_2}\omega_{t_2}^{s,t}dt_1d\omega_{t_2}^{s,t}-\int_{s}^{t}\int_{s}^{t_2}\omega_{t_1}^{s,t}dt_1d\omega_{t_2}^{s,t}\\
		&=\int_{s}^{t}\int_{s}^{t_2}(\omega_{t_2}^{s,t}-\omega_{t_1}^{s,t})dt_1d\omega_{t_2}^{s,t}.
	\end{align*}
	Using Fubini's theorem and integration by parts again, we end up with
	\begin{align*}
		\int_{s}^{t}\int_{s}^{t_2}\int_{s}^{t_1}drd\omega_{t_1}^{s,t}d\omega_{t_2}^{s,t}&=\int_{s}^{t}\int_{t_1}^{t}(\omega_{t_2}^{s,t}-\omega_{t_1}^{s,t})d\omega_{t_2}^{s,t}dt_1\\
		&=(t-s)\frac{(\omega_{t}^{s,t})^2}{2}+\int_{s}^{t}\frac{(\omega_{r}^{s,t})^2}{2}dr-\int_{s}^{t}\omega_{r}^{s,t}dr\omega_{t}^{s,t}.
	\end{align*}
	Therefore, equation (\ref{eq2}) can be rewritten as
	\begin{align*}
		2\lambda_1 \left(\frac{\left(\omega_t^{s,t}\right)}{2}^2(t-s)+\int_{s}^{t}(\omega_r^{s,t})^2dr-\int_{s}^{t}\omega_r^{s,t}dr\omega_t^{s,t}\right)&=\frac{1}{2}(t-s)\left(\langle M\rangle_t-\langle M\rangle_s\right).
	\end{align*}
	From equation (\ref{eq1}) and (\ref{eq4}), we get
	\begin{align*}
		\frac{1}{2}(t-s)\left(\langle M\rangle_t-\langle M\rangle_s\right)+\int_{s}^{t}(\langle M\rangle_r-\langle M\rangle_s)dr-2\lambda_1 \int_{s}^{t}\omega_r^{s,t}dr\omega_t^{s,t}&=\frac{1}{2}(t-s)\left(\langle M\rangle_t-\langle M\rangle_s\right).
	\end{align*}
	This leads to 
	\begin{align*}
		\int_{s}^{t}\omega_r^{s,t}dr\omega_t^{s,t}&=3\int_{s}^{t}(\langle M\rangle_r-\langle M\rangle_s)dr.
	\end{align*}
	Thus
	\begin{align*}
		&\sqrt{3(\langle M\rangle_t-\langle M\rangle_s)}\left(\frac{(t-s)}{18}\left(a(s,t)+3b(s,t)+5c(s,t)\right)
		+ \frac{(t-s)}{3}\left(b_0(s,t)+c_0(s,t)\right)\right)\\
		&=3\int_{s}^{t}(\langle M\rangle_r-\langle M\rangle_s).
	\end{align*}
	This yields
	\begin{align*}
		b(s,t)&=a(s,t)-3c(s,t)+\frac{3\sqrt{3}(2(\langle M\rangle _t-\langle M\rangle_s)\sqrt{\langle M\rangle_t-\langle M\rangle_s}-3\frac{1}{t-s}\int_{s}^{t}(\langle M\rangle_r-\langle M\rangle_s)dr)}{\sqrt{\langle M\rangle_t-\langle M\rangle_s}}.
	\end{align*}
	Now again, by the continuity condition for $b_0$, we get 
	\begin{align*}
		a(s,t)=3\sqrt{3(\langle M\rangle_t-\langle M\rangle_s)}-c(s,t)-b(s,t).
	\end{align*}
	Therefore
	\begin{align*}
		b(s,t)=\frac{3\sqrt{3}(5(\langle M\rangle_t-\langle M\rangle_s)-6\frac{1}{t-s}\int_{s}^{t}(\langle M\rangle_r-\langle M \rangle_s)dr)}{2\sqrt{\langle M\rangle_t-\langle M\rangle_s}}-2c(s,t).
	\end{align*}
	Thus
	\begin{align*}
		a(s,t)=c_1(s,t)-\frac{3\sqrt{3}(3(\langle M\rangle_t-\langle M\rangle_s)-6\frac{1}{t-s}\int_{s}^{t}(\langle M\rangle_r-\langle M\rangle_s)dr)}{2\sqrt{\langle M\rangle_t-\langle M\rangle_s}}.
	\end{align*}
	Rewriting the terms for $a$ and $b$, we get
	\begin{align*}
		a(s,t)=c(s,t)+9\sqrt{3}\left(\frac{\int_{s}^{t}\int_{s}^{u}d\langle M\rangle_rdu-\int_{s}^{t}\int_{s}^{u}drd\langle M\rangle_u}{2(t-s)\sqrt{\langle M\rangle_t-\langle M\rangle_s}}\right)
	\end{align*}
	and 
	\begin{align*}
		b(s,t)=3\sqrt{3\left(\langle M\rangle_t-\langle M \rangle_s\right)}-9\sqrt{3}\left(\frac{\int_{s}^{t}\int_{s}^{u}d\langle M\rangle_rdu-\int_{s}^{t}\int_{s}^{u}drd\langle M\rangle_u}{2(t-s)\sqrt{\langle M\rangle_t-\langle M\rangle_s}}\right)-2c(s,t).
	\end{align*}
	Now 
	\begin{align*}
		b_0(s,t)&=\sqrt{3(\langle M\rangle_t-\langle M\rangle_s)}-\frac{c(s,t)}{3}+6\sqrt{3}\left(\frac{\int_{s}^{t}\int_{s}^{u}d\langle M\rangle_rdu-\int_{s}^{t}\int_{s}^{u}drd\langle M\rangle_u}{2(t-s)\sqrt{\langle M\rangle_t-\langle M\rangle_s}}\right)\\
		&\quad -2\sqrt{3\left(\langle M\rangle_t-\langle M \rangle_s\right)}+\frac{4}{3}c(s,t)\\
		&=6\sqrt{3}\left(\frac{\int_{s}^{t}\int_{s}^{u}d\langle M\rangle_rdu-\int_{s}^{t}\int_{s}^{u}drd\langle M\rangle_u}{2(t-s)\sqrt{\langle M\rangle_t-\langle M\rangle_s}}\right) +c(s,t)-\sqrt{3\left(\langle M\rangle_t-\langle M \rangle_s\right)}
	\end{align*}
	and 
	\begin{align*}
		c_0(s,t)&=(\sqrt{3(\langle M\rangle_t-\langle M\rangle_s)}-c(s,t)).
	\end{align*}
	Inserting everything into equation (\ref{eq1}) and solving with respect to $c$ yields
	\begin{align*}
		c(s,t)
		&=-\frac{\kappa(s,t)+\frac{17}{2}[\langle M\rangle,Id]_{s,t}-8I_{s,t}^{(1,0)}(\langle M\rangle)}{2(t-s)\sqrt{\langle M\rangle_t-\langle M\rangle_s}}.
	\end{align*}
	The other unknowns can now be determined by simply inserting the representation for $c$ into the corresponding expression. It is easy to check, that equation (\ref{eq3}) is then automatically satisfied. 
\end{proof}
It is not difficult to see that the path $\omega^{s,t}$ in Lemma \ref{example} is Lipschitz-continuous and satisfies
\begin{align*}
	\|\omega^{s,t}\|_{1-\text{Höl};[s,t]}\leq C (\langle M\rangle_t-\langle M\rangle_s),
\end{align*}
for some $C>0$. Therefore, the  family  $(W^{s,t})_{(s,t)\in \Delta_T}$ given in Lemma \ref{example} satisfies the first part of Assumption \ref{a1} with the control
\begin{align*}
	\chi(s,t):=\langle M\rangle_t-\langle M\rangle_s.
\end{align*} 
The expected signature of the random paths $W^{s,t}$ coincides up to order $5$ with the expected signature of the corresponding Gaussian martingale. As already mentioned in Remark \ref{cubature_remark}, the second part of our Assumption \ref{a1} is already satisfied if the expected signature of the noise approximation coincides up to a certain order $m\geq 2$ with the expected signature of a square-integrable continuous martingale, hence the family  $(W^{s,t})_{(s,t)\in \Delta_T}$ satisfies the second part of Assumption \ref{a1}. Furthermore $W^{s,t}$ is a centered random path for any $(s,t)\in \Delta_T$. We will now verify the last part of our assumption for a particular example.
\begin{example}
	Let $(B_t)_{t\in [0,T]}$ be a one-dimensional Brownian motion. We consider the Gaussian martingale 
	\begin{align*}
		M_t:=\int_{0}^{t}sdB_s.
	\end{align*}
	Then we have $\langle M\rangle_t=\frac{1}{3}t^3$ and $[\langle M\rangle,h]_{s,t}=\frac{1}{6}(s^4-2s^3t+2st^3-t^4)$, hence 
	\begin{align*}
		\frac{135}{88}[\langle M\rangle,h]_{s,t}^2\leq (t-s)^2(\langle M \rangle_t-\langle M\rangle_s)^2.
	\end{align*}
	If we consider the approximation $(W^{s,t})_{(s,t)\in \Delta_T}$ defined as in Lemma \ref{example}, then it holds
	\begin{align*}
		\lim\limits_{h\downarrow 0}\frac{1}{h}\E\left[I_{t,t+h}^{t,t+h,(1,1)}\right]=t^2.
	\end{align*}
	Now $(W^{s,t})_{(s,t)\in \Delta_T}$ satisfies Assumption \ref{a1} and therefore by Theorem \ref{thm:main}, if the coefficients $V_0,V$ are non-degenerate, it holds for any $f\in \mathcal{C}_c^\infty(\mathbb{R},\mathbb{R})$ and any sequence of partitions $(\pi_n)_{n\in \mathbb{N}}$ of $[s,t]$ with $|\pi_n|\rightarrow 0$, that
	\begin{align*}
		Q_{s,t}^{\pi_n}f(x)\rightarrow \E\left[f(X_t^x)\right],
	\end{align*}
	as $n\rightarrow \infty$, where $X^x$ is the solution to 
	\begin{equation}
		\begin{aligned}
			dX_t&= V_0(X_t)dt+V(X_t)\circ dM_t \quad t\in [0,T]
			\\
			X_0&=x.
		\end{aligned}
	\end{equation}

	\begin{figure}[H]
		\centering
		\includegraphics[width=0.4\linewidth]{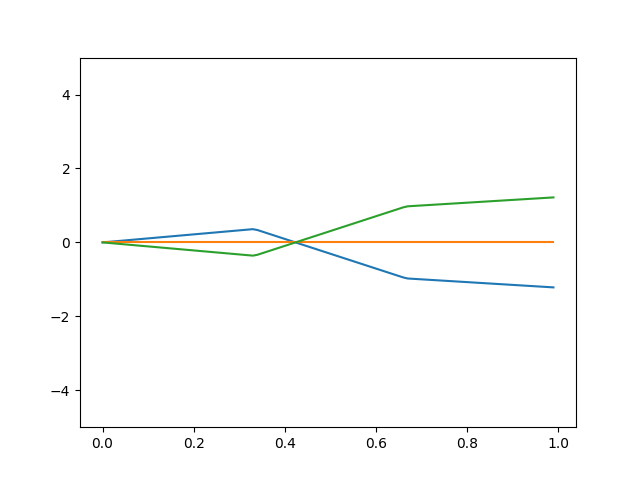}
		\includegraphics[width=0.4\linewidth]{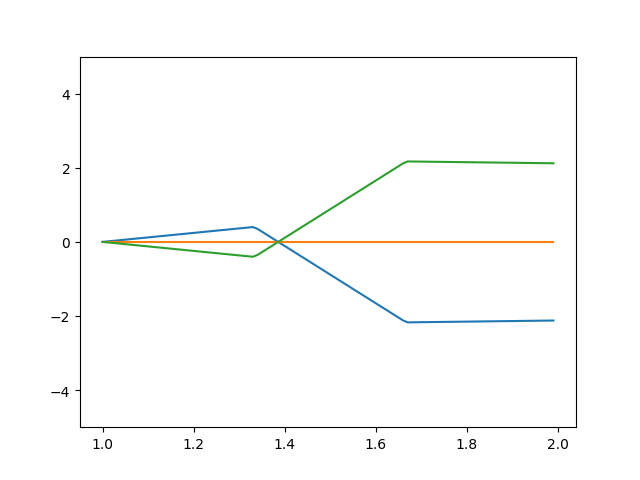}\\
		\includegraphics[width=0.4\linewidth]{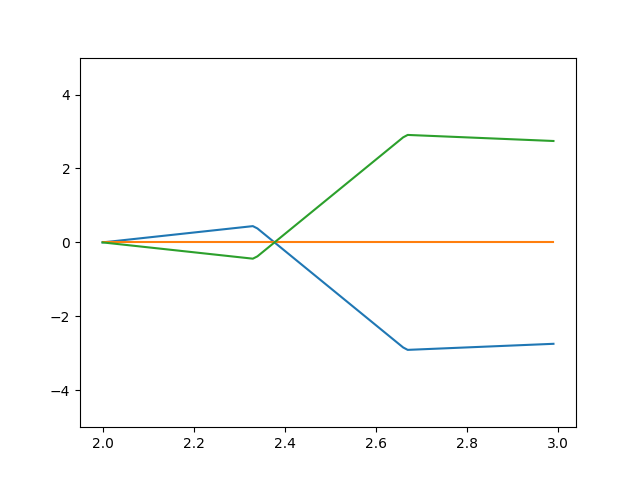}
		\includegraphics[width=0.4\linewidth]{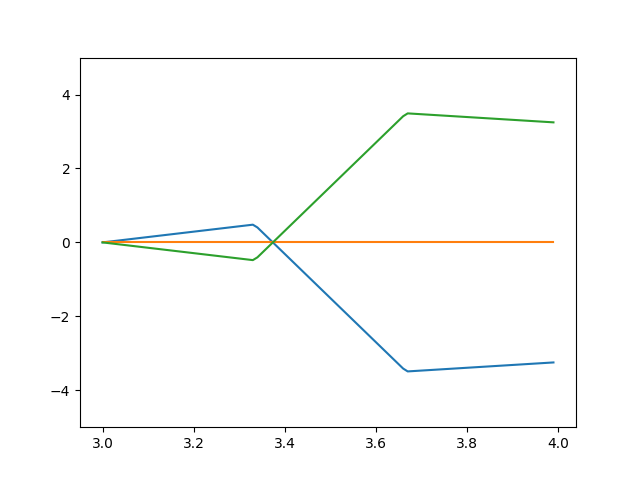}
		\caption{Realizations of the random paths $W^{0,1},W^{1,2},W^{2,3} \text{ and } W^{3,4}$}
	\end{figure}
\end{example}

\appendix
\section{Proof of the Multiplicative Sewing Lemma}

\begin{proof}
	Let $(s,t)\in \Delta_T$ and $\pi=\{s=t_0<t_1<...<t_k=t\}$ be a partition of $[s,t]$. Since $\chi$ is a control, there exists a $0<l<k$, such that 
	\begin{align*}
		\chi(t_{l-1},t_{l+1})\leq  \frac{2\chi(s,t)}{k}.
	\end{align*} 
	We denote by $\hat{\pi}$ the partition of $[s,t]$ obtained by deleting the point $t_l$ from $\pi$. Then
	\begin{align*}
		&\|\mu_{s,t}^{\hat{\pi}}-\mu_{s,t}^\pi\|_{L(X,Y)}\\
		&\leq \|\prod_{i=0}^{l-2}\mu_{t_i,t_{i+1}}\|_{L( X,X)}\|\mu_{t_{l-1},t_{l+1}}-\mu_{t_{l-1},t_l}\mu_{t_{l},t_{l+1}}\|_{L(X,Y)}\|\prod_{i=l+1}^{k}\mu_{t_i,t_{i+1}}\|_{L(Y,Y)}\\
		&\leq \epsilon_1(t_{l+1},t)\chi^z(t_{l-1},t_{l+1})\epsilon_2(s,t_{l-1})\\
		&\leq \epsilon_1(s,t)\epsilon_2(s,t)2^z\chi^z(s,t)k^{-z}
	\end{align*}
	Repeating this procedure until we arrive at the trivial partition $\pi_0=\{s,t\}$, we obtain
	\begin{align}\label{maxin}
		\sup_{\pi}\|\mu_{s,t}-\mu_{s,t}^\pi\|_{L(X,Y)}\leq 2^z\zeta(z)\epsilon_1(s,t)\epsilon_2(s,t)\chi^z(s,t).
	\end{align}
	
	We will now show, that
	
	\begin{align}\label{cin}
		\sup_{|\pi|,|\hat{\pi}|<\epsilon}\|\mu_{s,t}^\pi-\mu_{s,t}^{\hat{\pi}}\|_{L( X,Y)}\rightarrow 0.
	\end{align}
	
	Without loss of generality, let $\tilde{\pi}$ be a refinement of $\pi$. For $\pi=\{s=t_0<t_1<...<t_k=t\}$ we define 
	\begin{align*}
		M_{s,t}^l:=\prod_{i=0}^{l-1}\mu_{t_i,t_{i+1}}\prod_{i=l}^{k-1}\mu_{t_i,t_{i+1}}^{\tilde{\pi}\cap [t_i,t_{i+1}]}.
	\end{align*}
	Now $M_{s,t}^k=\mu_{s,t}^\pi$ and $M_{s,t}^0=\mu_{s,t}^{\tilde{\pi}}$, hence 
	\begin{align*}
		\|\mu_{s,t}^\pi-\mu_{s,t}^{\tilde{\pi}}\|_{L(X,Y)}&\leq \sum_{i=0}^{k}\|M_{s,t}^i-M_{s,t}^{i+1}\|_{L(X,Y)}\\
		&\leq \epsilon_1^2(s,t)\epsilon_2^2(s,t)\sum_{i=0}^{k-1}\|\mu_{t_i,t_{i+1}}-\mu_{t_i,t_{i+1}}^{\tilde{\pi}\cap [t_i,t_{i+1}]}\|_{L(X,Y)}\\
		&\leq 2^z\epsilon_1^2(s,t)\epsilon_2^2(s,t)\zeta(z)\sum_{i=0}^{k-1}\chi^z(t_i,t_{i+1})\\
		&\leq 2^z\epsilon_1^2(s,t)\epsilon_2^2(s,t)\zeta(z)\chi(s,t)\max_{1\leq i\leq k}\{\chi^{z-1}(t_i,t_{i+1})\}\rightarrow 0,
	\end{align*}
	as $|\pi|\rightarrow 0$.\\\\
	Now for any sequence of partitions $\pi_n$ with $|\pi_n|\rightarrow 0$, the sequence $\mu_{s,t}^{\pi_n}$ is Cauchy with respect to $\|\cdot \|_{L(X,Y)}$. Due to the completeness of $L(X,Y)$, there exists a unique $\phi_{s,t}\in L(X,Y)$, independent of the sequence of partitions $(\pi_n)_n$, such that 
	\begin{align*}
		\|\mu_{s,t}^{\pi_n}-\phi_{s,t}\|_{L(X,Y)}\rightarrow 0.
	\end{align*}
	Since 
	\begin{align*}
		\sup_{\pi}\|\mu^{\pi}_{s,t}\|_{L(Y,Y)}\leq \epsilon_2(s,t),
	\end{align*}
	it holds for any $x\in X$
	\begin{align*}
		\|\psi_{s,t}x\|_Y&\leq \|\psi_{s,t}x-\mu_{s,t}^{\pi_n}x\|_Y+\|\mu_{s,t}^{\pi_n}x\|_Y\\
		&\leq \|\psi_{s,t}-\mu_{s,t}^{\pi_n}\|_{L(X,Y)}\|x\|_X+\sup_{\pi}\|\mu_{s,t}^{\pi}\|_{L(Y,Y)}\|x\|_Y\\
		&\rightarrow \sup_{\pi}\|\mu_{s,t}^{\pi}\|_{L(Y,Y)}\|x\|_Y\\
		&\leq \epsilon_2(s,t)\|x\|_Y.
	\end{align*} 
	Therefore, $\phi$ is continuous from $(X,\|\cdot\|_Y) $ to $(Y,\|\cdot\|_Y)$. Since $X\subseteq Y$ densely, there exists a unique extension $\psi\in L(Y,Y)$, such that 
	\begin{align*}
		\|\psi_{s,t}\|_{L(Y,Y)}\leq \epsilon_2(s,t).
	\end{align*} 
	In order to show the evolution property, we observe that for any $x\in X$ it holds
	\begin{align*}
		\|\mu_{s,u}^{\pi_n}\mu_{u,t}^{\pi_n}x-\psi_{s,u}\psi_{u,t}x\|_Y&\leq \|\mu_{s,u}^{\pi_n}\mu_{u,t}^{\pi_n}x-\psi_{s,u}\mu_{u,t}^{\pi_n}x\|_Y\\
		&\quad +\|\psi_{s,u}\mu_{u,t}^{\pi_n}x-\psi_{s,u}\psi_{u,t}x\|_Y\\
		&\leq \|\mu_{s,u}^{\pi_n}\mu_{u,t}^{\pi_n}-\psi_{s,u}\mu_{u,t}^{\pi_n}\|_{L(X,Y)}\|x\|_X\\
		&\quad +\|\psi_{s,u}\mu_{u,t}^{\pi_n}-\psi_{s,u}\psi_{u,t}\|_{L(X,Y)}\|x\|_X\\
		&\leq  \|\mu_{s,u}^{\pi_n}-\phi_{s,u}\|_{L(X,Y)} \|\mu_{u,t}^{\pi_n}\|_{L(X,X)}\|x\|_X\\ &\quad +\|\psi_{s,u}\|_{L(Y,Y)}\|\mu_{u,t}^{\pi_n}-\phi_{u,t}\|_{L(X,Y)}\|x\|_X\\
		&\leq  \|\mu_{s,u}^{\pi_n}-\phi_{s,u}\|_{L(X,Y)} \epsilon_1(s,t)\|x\|_X\\ &\quad +\epsilon_2(s,t)\|\mu_{u,t}^{\pi_n}-\phi_{u,t}\|_{L(X,Y)}\|x\|_X.
	\end{align*}
	Hence 
	\begin{align*}
		\|\mu_{s,u}^{\pi_n}\mu_{u,t}^{\pi_n}x-\psi_{s,u}\psi_{u,t}x\|_Y\rightarrow 0,
	\end{align*}
	for any sequence of partitions $(\pi_n)_n$ with $|\pi_n|\rightarrow 0$. Therefore it holds for any $x\in X$
	\begin{align*}
		\|(\psi_{s,t}-\psi_{s,u}\psi_{u,t})x\|_Y&=\lim\limits_{n\rightarrow \infty} \|(\mu_{s,t}^{\pi_n}-\mu_{s,u}^{\pi_n}\mu_{u,t}^{\pi_n})x\|_Y\\
		&\leq \lim\limits_{n\rightarrow \infty}\|\mu_{s,t}^{\pi_n}-\mu_{s,u}^{\pi_n}\mu_{u,t}^{\pi_n}\|_{L(X,Y)}\|x\|_X\\
		&=0.
	\end{align*}
	For any $y\in Y$, there is a sequence $(x_n)_n\subseteq X$, such that $\|x_n-y\|_Y\rightarrow 0$. Thanks to the continuity of $\psi$, we get
	\begin{align*}
		(\psi_{s,t}-\psi_{s,u}\psi_{u,t})y&=\lim\limits_{n\rightarrow \infty}(\psi_{s,t}-\psi_{s,u}\psi_{u,t})x_n\\
		&=0.
	\end{align*}
	Thus, $\psi$ is multiplicative. Furthermore we have 
	\begin{align*}
		\|\mu_{s,t}-\psi_{s,t}\|_{L(X,Y)}&\leq \sup_{\pi}\|\mu_{s,t}-\mu_{s,t}^{\pi}\|_{L(X,Y)}+\|\mu_{s,t}^{\pi_n}-\psi_{s,t}\|_{L(X,Y)}\\
		&\leq 2^z\zeta(z)\epsilon_1(s,t)\epsilon_2(s,t)\omega(s,t)^z + \|\mu_{s,t}^{\pi_n}-\psi_{s,t}\|_{L(X,Y)}\\
		&\rightarrow 2^z\zeta(z)\epsilon_1(s,t)\epsilon_2(s,t)\omega(s,t)^z.
	\end{align*}
	It remains to show that $\psi$ is strongly continuous. For any $x\in X$ it holds
	\begin{align*}
		\|\psi_{s,t}x-x\|_Y&\leq \|\psi_{s,t}x-\mu_{s,t}x\|_Y+\|\mu_{s,t}x-x\|_Y\\
		&\leq \|\psi_{s,t}-\mu_{s,t}\|_{L(X,Y)}\|x\|_X+\|\mu_{s,t}x-x\|_Y\\
		&\leq 2^z\zeta(z)\epsilon_1(s,t)\epsilon_2(s,t)\omega(s,t)^z\|x\|_X+\|\mu_{s,t}x-x\|_Y\\
		&\rightarrow 0,
	\end{align*}
	for $(s,t)\rightarrow 0$. Now $(\psi_{s,t})_{(s,t)\in \Delta_T}$ is strongly continuous on the dense subset $X\subseteq Y$ and 
	\begin{align*}
		\|\psi_{s,t}\|_{L(Y,Y)}\leq \sup_{\pi}\|\mu^{\pi}_{s,t}\|_{L(Y,Y)}\leq \epsilon_1(s,t),
	\end{align*}
	hence $(\psi_{s,t})_{(s,t)\in \Delta_T}$ is strongly continuous on $Y$.\\\\
	Finally we have for any partition $\pi=\{s=t_0<t_1<...<t_k=t\}$ 
	\begin{align*}
		\|\mu_{s,t}^\pi-\psi_{s,t}\|_{L(X,Y)}&\leq \|\mu_{s,t}^\pi-\mu_{s,t}^{\tilde{\pi}}\|_{L(X,Y)}+\|\mu_{s,t}^{\tilde{\pi}}-\psi_{s,t}\|_{L(X,Y)}\\
		&\leq \sup_{\hat{\pi}}\|\mu_{s,t}^\pi-\mu_{s,t}^{\hat{\pi}}\|_{L(X,Y)}+\|\mu_{s,t}^{\tilde{\pi}}-\psi_{s,t}\|_{L(X,Y)}\\
		& \leq 2^z\epsilon_1^2(s,t)\epsilon_2^2(s,t)\zeta(z)\chi(s,t)\max_{1\leq i\leq k}\{\chi^{z-1}(t_i,t_{i+1})\}+\|\mu_{s,t}^{\tilde{\pi}}-\psi_{s,t}\|_{L(X,Y)},
	\end{align*}
	 for any refinement $\tilde{\pi}$ of $\pi$. Passing to the limit $|\tilde{\pi}|\rightarrow 0$, we obtain 
	 	\[
	 	\|\mu_{s,t}^\pi-\psi_{s,t}\|_{L(X,Y)}\leq  2^z\epsilon_1^2(s,t)\epsilon_2^2(s,t)\zeta(z)\chi(s,t)\max_{1\leq i\leq k}\{\chi^{z-1}(t_i,t_{i+1})\}.
	 	\qedhere
	 	\]
\end{proof}

\section*{Acknowledgement} This work has been funded by Deutsche Forschungsgemeinschaft (DFG) through grant CRC 910 ``Control of self-organizing nonlinear systems: Theoretical methods and concepts of application'', Project (A10) ``Control of stochastic mean-field equations with applications to brain networks.''

\bibliographystyle{abbrv}

\begin{bibdiv}
	\begin{biblist}
		
		\bib{BentataCont}{misc}{
			author={Bentata, Amel},
			author={Cont, Rama},
			title={Mimicking the marginal distributions of a semimartingale},
			publisher={arXiv preprint arXiv:0910.3992},
			date={2009},
			url={https://arxiv.org/abs/0910.3992},
		}
		
		\bib{CM19}{article}{
			author={Crisan, Dan},
			author={McMurray, Eamon},
			title={Cubature on wiener space for mckean--vlasov sdes with smooth
				scalar interaction},
			date={2019},
			journal={The Annals of Applied Probability},
			volume={29},
			number={1},
			pages={130\ndash 177},
		}
		
		\bib{SPDE}{article}{
			author={Dörsek, Philipp},
			author={Teichmann, Josef},
			author={Veluscek, Dejan},
			title={Cubature methods for stochastic (partial) differential equations
				in weighted spaces},
			date={201201},
			journal={Stochastic Partial Differential Equations: Analysis and
				Computations},
			volume={1},
		}
		
		\bib{Fritz}{book}{
			author={Friz, Peter~K.},
			author={Victoir, Nicolas},
			title={Multidimensional stochastic processes as rough paths : theory and
				applications},
			language={eng},
			series={Cambridge studies in advanced mathematics ; 120},
			publisher={Cambridge University Press},
			address={Cambridge, UK ;},
			date={2010},
			ISBN={0521876079},
		}
		
		\bib{GHN21}{article}{
			author={Gerasimovi{\v{c}}s, Andris},
			author={Hocquet, Antoine},
			author={Nilssen, Torstein},
			title={Non-autonomous rough semilinear pdes and the multiplicative
				sewing lemma},
			date={2021},
			journal={Journal of Functional Analysis},
			volume={281},
			number={10},
			pages={109200},
		}
		
		\bib{article}{article}{
			author={Lyons, Terry},
			author={Victoir, Nicolas},
			title={Cubature on wiener space},
			date={200401},
			journal={Proceedings of The Royal Society A: Mathematical, Physical and
				Engineering Sciences},
			volume={460},
			pages={169\ndash 198},
		}
		
		\bib{NV08}{article}{
			author={Ninomiya, Syoiti},
			author={Victoir, Nicolas},
			title={Weak approximation of stochastic differential equations and
				application to derivative pricing},
			date={2008},
			journal={Applied Mathematical Finance},
			volume={15},
			number={2},
			pages={107\ndash 121},
		}
		
		\bib{PASSEGGERI20201226}{article}{
			author={Passeggeri, Riccardo},
			title={On the signature and cubature of the fractional brownian motion
				for \(H >1/2\)},
			date={2020},
			ISSN={0304-4149},
			journal={Stochastic Processes and their Applications},
			volume={130},
			number={3},
			pages={1226\ndash 1257},
			url={https://www.sciencedirect.com/science/article/pii/S0304414919302844},
		}
		
		\bib{SchillingPartzsch+2012}{book}{
			author={Schilling, René~L.},
			author={Partzsch, Lothar},
			title={Brownian motion: An introduction to stochastic processes},
			publisher={De Gruyter},
			date={2012},
			ISBN={9783110278989},
			url={https://doi.org/10.1515/9783110278989},
		}
		
	\end{biblist}
\end{bibdiv}

\end{document}